\newcommand{\med}{\ensuremath{\text{med}}}
\newcommand{\e}{\ensuremath{\mathbf{e}}}
\newcommand{\R}{{\mathbb R}}
\theoremstyle{plain}
  \newtheorem{theorem}[subsection]{Theorem}
  \newtheorem{lemma}[subsection]{Lemma}
  \newtheorem{corollary}[subsection]{Corollary}
\theoremstyle{remark}
  \newtheorem{remark}[subsection]{Remark}
\theoremstyle{definition}
\numberwithin{equation}{section}
\begin{document}
\title[Ishimori equation]{Local Well-posedness for hyperbolic-elliptic Ishimori equation}
\author{Yuzhao Wang}
\address{LMAM, School of Mathematical Sciences, Peking University, Beijing
100871, China}

\email{wangyuzhao2008@gmail.com}

\begin{abstract}
In this paper we consider the hyperbolic-elliptic Ishimori
initial-value problem with the form:
\begin{eqnarray*}
\left \{\begin{array}{ll} \displaystyle \partial_t s
=s\times\square_{x}s+b(\phi_{x_1}s_{x_2}+\phi_{x_2}s_{x_1})\text{ on
}\mathbb{R}^{2}\times
[-1,1];\\
\Delta\phi=2s\cdot(s_{x_1}\times s_{x_2})\\
s(0)=s_{0}
\end{array}
\right.
\end{eqnarray*}
where $s(\cdot,t): \mathbb{R}^{2}\rightarrow \mathbb{S}^{2}\subset
\mathbb{R}^3$, $\times$ denotes the wedge product in
$\mathbb{R}^{3}$, $\square_x =
\partial^{2}_{x_1}-
\partial^{2}_{x_2}$, $b\in \mathbb{R}$. We prove that such system is locally
well-posed for small data $s_{0}\in
H_{Q}^{\sigma_{0}}(\mathbb{R}^{2}; \mathbb{S}^{2})$, $\sigma_{0}>
3/2$, $Q\in \mathbb{S}^{2}$. The new ingredient is that we develop
the methods of Ionescu and Kenig \cite{IK} and \cite{IK2} to
approach the problem in a perturbative way.
\end{abstract}
\maketitle\tableofcontents

\keywords{Keywords: Hyperbolic-elliptic Ishimori equation, Local
well-posedness}

\maketitle

\section{Introduction}

In this paper we consider the hyperbolic-elliptic Ishimori equation,
which is an integrable topological spin field model, with the form
\begin{eqnarray}\label{EQ1}
\left \{\begin{array}{ll} \displaystyle \partial_t s
=s\times\square_{x}s+b(\phi_{x_1}s_{x_2}+\phi_{x_2}s_{x_1});\\
\Delta\phi=2s\cdot(s_{x_1}\times s_{x_2})\\
s(0)=s_{0},
\end{array}
\right.
\end{eqnarray}
where $\square_x =
\partial^{2}_{x_1}-
\partial^{2}_{x_2}$, and $s: \mathbb{R}^{2}\times\mathbb{R}\hookrightarrow
 \mathbb{S}^{2}\hookrightarrow \mathbb{R}^{3}$, $\lim_{|x_1|,|x_2|\rightarrow
  \infty}s(x_1,x_2,t)=(0,0,-1)$, $b\in \mathbb{R}$. In \cite{I},
Ishimori introduced the system \eqref{EQ1} in analogy with 2D CCIHS
chain, as a model having the same topological properties as the
latter yet permitting topological vortices whose dynamics were
integrable. Ishimori system \eqref{EQ1} also describes the evolution
of a system of static spin vortices in the plane. Furthermore,
\eqref{EQ1} is completely integrable when $b=1$.

During the past decades, Ishimori system \eqref{EQ1} was widely
studied, see \cite{BIK2,KN,S} and references therein. In \cite{S},
Soyeur proved that the Ishimori system \eqref{EQ1} was well-posed in
$H^m(\R^2)$ for $m \geq 4$. In \cite{KN}, Kenig and Nahmod proved
that the Ishimori system \eqref{EQ1} admited a local in time
solution with large data in the Sobolev class $
H^{m}(\mathbb{R}^2)$, $m> 3/2$, and uniqueness in
$H^{2}(\mathbb{R}^2)$. Recently, for the completely integrable case
$b=1$, Bejenaru, Ionescu and Kenig proved in \cite{BIK2} that the
Ishimori system \eqref{EQ1} was globally well-posed with small data
in the critical Sobolev space
$\dot{H}_Q^{1}(\mathbb{R}^2;\mathbb{S}^2)$.

In this paper, we prove a local well-posedness result for the
Ishimori equation \eqref{EQ1} with $b\in \R$, when the data is small
in Sobolev space $H_Q^{\sigma_0}(\mathbb{R}^2;\mathbb{S}^2)$, for
$\sigma_{0}> 3/2$, $Q\in\mathbb{S}^2$, and the constants in this
paper will dependent on $b$. We begin with some notations. For
$\sigma\geq 0$, let $J^{\sigma}$ denote the operator defined by
Fourier multiplier $(1+|\xi|^{2})^{\sigma/2}$, and $H^{\sigma}$
denote the usual Sobolev space on $\mathbb{R}^{2}$ with the norm
 $\|f\|_{H^{\sigma}}=\|J^{\sigma}(f)\|_{L^{2}}$. Then for
$\sigma \geq 0$ and $Q=(Q_{1},Q_{2},Q_{3})\in \mathbb{S}^{2}$, we
can define the metric space
$$
H^{\sigma}_{Q}=H^{\sigma}_{Q}(\mathbb{R}^{2};\mathbb{S}^{2})=\{f:
\mathbb{R}^{2}\rightarrow \mathbb{S}^{2}: |f(x)|\equiv1 \text{ and }
f_{l}-Q_{l}\in H^{\sigma}\text{ for } l=1,2,3\}
$$
with the induced distance
$$
d_{Q}(f,g)=\Big[\sum_{l=1}^{3}\|f_{l}-g_{l}\|_{H^{\sigma}}^{2}\Big]^{1/2}.
$$
For $Q\in \mathbb{S}^{2}$, we define the complete metric space
$$
H_{Q}^{\infty}(\mathbb{R}^{2})=\bigcap_{\sigma\geq
0}H_{Q}^{\sigma}(\mathbb{R}^{2})\text{ with the induced metric}.
$$
Let $f_{Q}(x)\equiv Q$, $f_{Q}\in H^{\infty}_{Q}$. For any metric
space $X$, $x\in X$, and $r>0$, denote $B_{X}(x,r)=\{y\in X:
d(x,y)<r\}$. $\mathbb{Z}_{+}=\{0,1,2,\ldots\}$ is the nature number
set. We now state our main result.

\begin{theorem}\label{t1}
(a) Assume $\sigma_{0}> 3/2$ and $Q\in \mathbb{S}^{2}$. Then there
is $\epsilon(\sigma_{0})>0$ such that for any $u_{0}\in
H_{Q}^{\infty}\cap B_{H_{Q}^{\sigma_{0}}}(0, \epsilon(\sigma_{0}))$
there is a unique solution
$$
s=S^{\infty}(s_{0})\in C([-1,1]: H_{Q}^{\infty})
$$
of the initial-value problem \eqref{EQ1}.

(b)The mapping $s_{0}\rightarrow S^{\infty}(s_{0})$ extends uniquely
to a Lipschitz mapping
\begin{eqnarray}\label{t11}
S^{\sigma_{0}} : B_{H_{Q}^{\sigma_{0}}}(0,
\epsilon(\sigma_{0}))\rightarrow C([-1,1]: H_{Q}^{\sigma_{0}})
\end{eqnarray}
where $S^{\sigma_{0}}(s_{0})$ is a weak solution of the
initial-value problem \eqref{EQ1} for any $s_{0}\in
B_{H_{Q}^{\sigma_{0}}}(f_{Q}, \epsilon(\sigma_{0}))$.

(c) For any $\sigma\in \mathbb{Z}_{+}$ we have the local Lipchitz
bound
\begin{eqnarray}\label{t12}
\sup_{t\in[-1,1]}d_{Q}^{\sigma_{0}+\sigma}(S^{\sigma_{0}}(s_{0})(t)-S
^{\sigma_{0}}(s_{0}')(t))\leq C(\sigma_{0},\sigma,
R)d_{Q}^{\sigma_{0}+\sigma}(s_{0},s_{0}')
\end{eqnarray}
for any $R>0$ and $s_{0},s_{0}'\in B_{H_{Q}^{\sigma_{0}}}(0,
\epsilon(\sigma_{0}))\cap B_{H_{Q}^{\sigma_{0}+\sigma}}(0,R)$.
\end{theorem}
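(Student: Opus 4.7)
The plan is to adapt the perturbative $F^\sigma$/$N^\sigma$ framework developed by Ionescu and Kenig in \cite{IK, IK2} for elliptic Schr\"odinger maps to the hyperbolic-elliptic Ishimori system. First I would reduce \eqref{EQ1} to a perturbation of a linear dispersive equation. Setting $v=s-Q$, the constraint $|s|=1$ gives $Q\cdot v=-|v|^2/2$, so the normal component of $v$ is quadratic in the tangential part. Writing the tangential part in a frame that identifies $T_Q\mathbb{S}^2\cong\C$ (so that the antisymmetric operator $Q\times\cdot$ becomes multiplication by $i$), equation \eqref{EQ1} recasts as a complex-valued hyperbolic Schr\"odinger equation
\begin{eqnarray*}
(i\partial_t+\square_x)u = \M(u,\bar u),
\end{eqnarray*}
whose nonlinearity $\M$ collects the quadratic contribution from $v\times\square_x v$ and the cubic magnetic contribution $b(\phi_{x_1}s_{x_2}+\phi_{x_2}s_{x_1})$ with $\phi=2\Delta^{-1}(s\cdot(s_{x_1}\times s_{x_2}))$.

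Second, I would construct Littlewood-Paley-localized resolution spaces $F^\sigma$ and nonlinearity spaces $N^\sigma$ adapted to the hyperbolic Schr\"odinger symbol $\tau+\xi_1^2-\xi_2^2$, built from $X^{s,b}$-type or $U^p/V^p$-type atomic pieces on each dyadic shell and augmented with the relevant Strichartz norms. Once these spaces are in place, the main linear estimate
\begin{eqnarray*}
\norm{u}_{F^\sigma([-1,1])} \les \norm{u(0)}_{H^\sigma}+\norm{(i\partial_t+\square_x)u}_{N^\sigma([-1,1])}
\end{eqnarray*}
is essentially structural.

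Third, the heart of the argument is the multilinear estimates
\begin{eqnarray*}
\norm{\M(u_1,\ldots,u_k)}_{N^{\sigma_0}}\les\prod_j\norm{u_j}_{F^{\sigma_0}},\quad k=2,3,
\end{eqnarray*}
together with the $N^{\sigma_0+\sigma}$ versions and the corresponding difference estimates $\M(u)-\M(u')$. I would perform dyadic frequency decomposition and treat high-high, high-low and low-high interactions separately. Antisymmetry of $\times$ produces a commutator structure that sheds one derivative off the highest-frequency factor, while the cubic magnetic term is tamed because the operators $\Delta^{-1}\partial_{x_j}\partial_{x_k}$ are bounded Calder\'on-Zygmund.

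Finally, combined with $H^{\sigma_0}\hookrightarrow L^\infty$ for $\sigma_0>3/2$, a contraction on a small ball in $F^{\sigma_0}$ yields the solution stated in (a); persistence of $H^\infty_Q$ regularity follows from a Bona-Smith regularization, and the Lipschitz bounds in (b)-(c) come directly from the bilinear difference estimates. The principal obstacle is the multilinear analysis in the hyperbolic regime: whereas the elliptic Schr\"odinger resonance function $-2\xi\cdot\eta$ vanishes only on the orthogonal set $\xi\perp\eta$, the hyperbolic resonance $2(\xi_1\eta_1-\xi_2\eta_2)$ vanishes on the codimension-one ``Minkowski null cone,'' so modulation yields no free gain in high-high-to-low interactions; exploiting transversality along this resonant variety is the most delicate point and dictates the precise choice of function spaces.
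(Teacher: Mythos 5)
Your proposal correctly identifies the overall Ionescu--Kenig perturbative framework ($F^\sigma$/$N^\sigma$ resolution spaces built from dyadic pieces, linear Duhamel estimate, multilinear nonlinear estimates, iteration), and you correctly sense that the obstacle is specific to the hyperbolic symbol. But the proposal stops at the heuristic level exactly where the paper's proof does its real work, and some of the ingredients you name are not the ones that make the argument go through.

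Concretely: (i) The paper reduces \eqref{EQ1} to the complex equation \eqref{t21} by stereographic projection (following \cite{S}), not by a tangent-frame identification $T_Q\mathbb{S}^2\cong\C$. This matters because the resulting nonlinearity $\frac{2\bar u}{1+u\bar u}\bigl[(\partial_{x_1}u)^2-(\partial_{x_2}u)^2\bigr]+ib(\phi_{x_1}u_{x_2}+\phi_{x_2}u_{x_1})$ is what the trilinear estimate (Lemma \ref{n1}) must handle, and the paper explicitly notes that the generalization \eqref{t22} carries no null structure, so ``antisymmetry of $\times$ sheds a derivative'' is not available here. (ii) The paper's spaces $Z_k=X_k+Y_k^{\e_1}+\cdots+Y_k^{\e_L}$ combine a frequency $X^{\sigma,b}$-type component with a directional $L^{1,2}_{\e}$ physical-space component; the workhorse estimates are a local-smoothing estimate ($L^{\infty,2}_\e$, Lemma \ref{lp4}) and a maximal-function estimate ($L^{2,\infty}_\e$, Lemma \ref{lp5}), not Strichartz or $U^p/V^p$ atomic structure. (iii) The genuinely new difficulty---which you do not isolate---is that the $L^{\infty,2}_\e$ bound $\|\mathcal{F}^{-1}_{(2+1)}(f)\|_{L^{\infty,2}_\e}\leq C(k+1)\|f\|_{Z_k}$, which drives the algebra property in \cite{IK}, \emph{fails} for the hyperbolic $Z_k$ (Remark \ref{re1} gives a counterexample built on the light-cone direction). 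The paper's fix is the Leibniz-type identity
\begin{equation*}
H(uv)=(Hu)v+u(Hv)+\bigl[\partial_{x_1}u\,\partial_{x_1}v-\partial_{x_2}u\,\partial_{x_2}v\bigr],\qquad H=i\partial_t+\square,
\end{equation*}
used inside Lemma \ref{n2} to distribute the symbol across a product and replace the missing $L^{\infty,2}$ gain by bilinear $L^2$ bounds (Lemma \ref{pl2}). Your proposal has no substitute for this mechanism, so the algebra estimate you assert would not close. (iv) Persistence of $H^\infty_Q$ regularity and the Lipschitz bounds in (b)--(c) are obtained in Section 7 by a direct recursive derivative bookkeeping (the $J^{\sigma'}$ inductions \eqref{fi5}, \eqref{fi20}), not by Bona--Smith regularization. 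In short, the skeleton you sketch is the right one, but the key lemma that makes the hyperbolic case work---the failure of the $L^{\infty,2}$ algebra ingredient and the identity that replaces it---is absent, and this is not a detail that can be filled in routinely.
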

We use the stereographic projection to reduce \eqref{EQ1} to a
nonlinear non-elliptic Schrodinger equation \eqref{t21} below. We
refer the readers to \cite{IK} or \cite{S} for the details. Finally,
for Theorem \ref{EQ1} it suffices to prove Theorem \ref{t2} below.
\begin{theorem}\label{t2}
(a) Assume $\sigma_{0}> 3/2$. Then there is $\epsilon(\sigma_{0})>0$
with the property that for any $u_{0}\in H^{\infty}\cap
B_{H^{\sigma_{0}}}(0, \epsilon(\sigma_{0}))$ there is a unique
solution
$$
u=\widetilde{S}^{\infty}(\phi)\in C([-1,1]: H^{\infty})
$$
of the initial-value problem
 \begin{eqnarray}\label{t21}
\left \{\begin{array}{ll} \displaystyle (i\partial_t +\square) u
=\frac{2\bar{u}}{1+u\bar{u}}[(\partial_{x_1}u)^{2}-(\partial_{x_2}u)^{2}]+ib(\phi_{x_1}u_{x_2}+\phi_{x_2}u_{x_1})
\\
\displaystyle\Delta\phi=4i\frac{(u_{x_1}\bar{u}_{x_2}-\bar{u}_{x_1}u_{x_2})}{(1+u\bar{u})^2}
\\
u(0,x_1,x_2)=u_0(x_1,x_2)
\end{array}
\right.
\end{eqnarray}

(b)The mapping $\phi\rightarrow \widetilde{S}^{\infty}(\phi)$
extends uniquely to a Lipschitz mapping
\begin{eqnarray}\label{t11}
\widetilde{S}^{\sigma_{0}} : B_{H^{\sigma_{0}}}(0,
\epsilon(\sigma_{0}))\rightarrow C([-1,1]: H^{\sigma_{0}})
\end{eqnarray}
where $\widetilde{S}^{\sigma_{0}}(\phi)$ is a weak solution of the
initial-value problem \eqref{t21} for any $u_{0}\in
B_{H^{\sigma_{0}}}(0, \epsilon(\sigma_{0}))$.

(c) For any $\sigma'\in \mathbb{Z}_{+}$ we have the local Lipchitz
bound
\begin{eqnarray}\label{t12}
\sup_{t\in[-1,1]}\|\widetilde{S}^{\sigma_{0}}(\phi)(t)-\widetilde{S}
^{\sigma_{0}}(\phi')(t)\|_{H^{\sigma_{0}+\sigma'}}\leq
C(\sigma_{0},\sigma', R)\|\phi-\phi'\|_{H^{\sigma_{0}+\sigma'}}
\end{eqnarray}
for any $R>0$ and $\phi, \phi'\in B_{H^{\sigma_{0}}}(0,
\epsilon(\sigma_{0}))\cap B_{H^{\sigma_{0}+\sigma'}}(0,R)$.
\end{theorem}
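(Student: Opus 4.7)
The plan is to adapt the perturbative framework of Ionescu--Kenig \cite{IK,IK2} to the hyperbolic-elliptic Schr\"odinger operator $i\partial_t+\square$, reducing \eqref{t21} to a contraction argument in a resolution space tailored to the saddle dispersion relation $\tau=\xi_1^2-\xi_2^2$. Concretely, I would introduce Littlewood--Paley projections $P_k$ together with frequency-localized Banach pairs $(F_k,N_k)$ at spatial scale $2^k$, where $F_k$ combines $L^\infty_t L^2_x$, local smoothing, maximal-function, and short-time Strichartz components adapted to $e^{it\square}$, while $N_k$ is the predual augmented by a weighted $X^{s,b}$-type modulation norm measured against the hyperbolic characteristic surface. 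With aggregated norms
$$
\|u\|_{F^\sigma}^2=\sum_{k\in\Z}2^{2\sigma k_+}\|P_k u\|_{F_k}^2,\qquad\|F\|_{N^\sigma}^2=\sum_{k\in\Z}2^{2\sigma k_+}\|P_k F\|_{N_k}^2,
$$
the linear estimate $\|u\|_{F^{\sigma_0}}\lesssim\|u(0)\|_{H^{\sigma_0}}+\|(i\partial_t+\square)u\|_{N^{\sigma_0}}$ will follow from Strichartz and local-smoothing bounds for $e^{it\square}$, which transpose from the elliptic case since $\xi_1^2-\xi_2^2$ has nondegenerate Hessian.

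The heart of the work is a family of multilinear estimates in these spaces. Expanding $(1+|u|^2)^{-1}=\sum_{n\geq0}(-|u|^2)^n$ and using the representation $\phi=4i\Delta^{-1}\bigl[(u_{x_1}\bar u_{x_2}-\bar u_{x_1}u_{x_2})(1+|u|^2)^{-2}\bigr]$ reduces both nonlinearities in \eqref{t21} to absolutely summable series whose model terms are the trilinear expression $\bar u\,\partial u\,\partial u$ and its nonlocal companion $\partial_{x_j}\Delta^{-1}(\partial u\,\partial u)\cdot\partial_{x_k}u$. After Littlewood--Paley paraproduct decomposition, the high-low and high-high-low regimes are controlled by pairing a local-smoothing gain on the top-frequency factor with a maximal-function bound on the low factors; the high-high-high resonant regime calls for an improved bilinear Strichartz estimate attuned to the saddle geometry of the resonance set $\{\xi_1\eta_1=\xi_2\eta_2\}$, which is weaker than its elliptic counterpart but still sufficient above $H^{3/2}$.

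The principal difficulty is the nonlocal term $b(\phi_{x_1}u_{x_2}+\phi_{x_2}u_{x_1})$. Because $\Delta^{-1}$ restores the two derivatives already present in the source for $\phi$, the field $\phi$ has essentially the regularity of $u^2$, and the singular Calder\'on--Zygmund multiplier $\partial_{x_j}\partial_{x_k}\Delta^{-1}$ creates trouble at low frequencies of $\phi$ and inside modulation-weighted norms. Closing the estimate requires exploiting the antisymmetric structure of $u_{x_1}\bar u_{x_2}-\bar u_{x_1}u_{x_2}$ to kill diagonal-frequency contributions, together with a frequency-envelope argument treating separately the regimes $|\xi_\phi|\ll|\xi_u|$, $|\xi_\phi|\sim|\xi_u|$, and $|\xi_\phi|\gg|\xi_u|$. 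This is precisely the step for which the subcritical threshold $\sigma_0>3/2$ (rather than the scaling-critical threshold exploited for $b=1$ in \cite{BIK2}) is needed.

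Once the linear and multilinear estimates are established, parts (a) and (b) follow from a standard contraction argument for the Duhamel map $u\mapsto e^{it\square}u_0-i\int_0^t e^{i(t-t')\square}N(u)(t')\,dt'$ on the ball $\{u:\|u\|_{F^{\sigma_0}}\leq 2\epsilon(\sigma_0)\}$, with the Lipschitz claim in (b) obtained by applying the multilinear bounds to the equation satisfied by the difference $u-u'$. Part (c) is then handled by induction on $\sigma'\in\Z_+$: the difference $u-u'$ satisfies a linear Schr\"odinger equation whose source terms are multilinear in $u,u'$ with exactly one factor at top regularity $\sigma_0+\sigma'$ and the remaining factors bounded inductively at level $\sigma_0+\sigma'-1$, so the same multilinear estimates close the induction and yield \eqref{t12}.
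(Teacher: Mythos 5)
Your high-level outline (frequency-localized resolution spaces, local-smoothing/maximal-function pairings, paraproduct decomposition of the multilinear terms, iteration plus induction over $\sigma'$) matches the skeleton of the paper. But several of the points you treat as routine transfers from the elliptic case are precisely where the paper has to do new work, and your plan does not supply those ideas.

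First, you assert that the linear estimates ``transpose from the elliptic case since $\xi_1^2-\xi_2^2$ has nondegenerate Hessian.'' For Strichartz this is true, but the paper does not lean on Strichartz at all; the workhorses are the directional local-smoothing bound in $L^{\infty,2}_{\mathbf{e}}$ and the maximal-function bound in $L^{2,\infty}_{\mathbf{e}}$, and the paper stresses that the Ionescu--Kenig derivation of the local-smoothing bound relies on ellipticity and \emph{does not} carry over. The substitute is a representation formula for $Y^{\mathbf{e}}_k$ functions that exploits the factorization $P(\tau,\xi)=(\xi_1^{\mathbf{e}}-t^*_{\mathbf{e}})K(\tau,\xi)$, turning $1/P$ into a Hilbert-transform-type kernel in the $\xi_1^{\mathbf{e}}$ variable. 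Nothing in your proposal indicates how you would recover local smoothing once the elliptic argument is unavailable.

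Second, and more seriously, you do not address the failure of the estimate $\|\mathcal{F}^{-1}(f)\|_{L^{\infty,2}_{\mathbf{e}}}\lesssim(k+1)\|f\|_{Z_k}$, which the paper shows (via an explicit counterexample on $\tau+\xi_1^{\mathbf{e}}\xi_2^{\mathbf{e}}$) is false for the non-elliptic $Z_k$ spaces. That bound is the engine behind the algebra property in the elliptic Ionescu--Kenig argument, so without a replacement your multilinear scheme cannot close. The paper's fix is the product identity $H(uv)=(Hu)v+u(Hv)+[\partial_{x_1}u\,\partial_{x_1}v-\partial_{x_2}u\,\partial_{x_2}v]$ with $H=i\partial_t+\square$, which rewrites the problematic modulation-weight pieces as a sum that can be estimated term-by-term. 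Your plan is silent on this, and without it the high-high interactions in the algebra estimate (Case 3/Case 4 in the paper's Lemma on $F^\sigma+\overline{F}^\sigma$) do not close.

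Third, your treatment of the nonlocal term proposes exploiting ``the antisymmetric structure of $u_{x_1}\bar u_{x_2}-\bar u_{x_1}u_{x_2}$ to kill diagonal-frequency contributions'' together with a frequency-envelope argument. The paper explicitly does \emph{not} go this route: it remarks that in the generalized system \eqref{t22} there is no exploitable null structure, and instead follows Soyeur's device of writing $\Delta\phi$ in divergence form so that $\phi_{x_j}$ becomes $\mathcal{R}(\mathcal{N}_0(u)\partial u)$ with $\mathcal{R}$ a collection of $L^2$-bounded Riesz-type multipliers, after which the nonlocal term is estimated exactly like the local cubic term in the trilinear lemma. Your antisymmetry-plus-envelope approach is not obviously wrong, but you give no argument that it yields the $\sigma_0>3/2$ threshold; it is also unnecessary, and inconsistent with the paper's stated motivation for staying above $3/2$. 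I would replace that part of your plan with the Soyeur reduction, and add both the new local-smoothing representation and the $H(uv)$ identity to make the multilinear step rigorous.

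Finally, a small point: the paper does not run a genuine contraction mapping in a metric ball; it uses a recursive (Picard-type) construction with a priori bounds proved by induction on $n$ and on $\sigma'$. The difference is cosmetic at the level of the final statements, but your wording suggests you would need a single space closed under the full nonlinearity, whereas the paper deliberately splits $F^\sigma$, $\overline{F}^\sigma$, $\widetilde F^\sigma$ and $N^\sigma$ and passes between them via the trilinear and algebra lemmas.
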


\begin{remark}
Furthermore, we can generalize Theorem \ref{t2}. The results in the
Theorem \ref{t2} for system \eqref{t21} also hold for the following
system
 \begin{eqnarray}\label{t22}
\left \{\begin{array}{ll} \displaystyle (i\partial_t +\square) u
=\mathcal{N}(u)+ib(\phi_{x_1}u_{x_2}+\phi_{x_2}u_{x_1})
\\
\displaystyle\Delta\phi=4i\frac{(u_{x_1}\bar{u}_{x_2}-\bar{u}_{x_1}u_{x_2})}{(1+u\bar{u})^2}
\\
u(0,x_1,x_2)=u_0(x_1,x_2)
\end{array}
\right.
\end{eqnarray}
where $\mathcal{N}(u)=F_1(u,\overline{u})Q_1(\nabla u, \nabla
u)+F_2(u,\overline{u})Q_2(\nabla u, \nabla
\overline{u})+F_3(u,\overline{u})Q_3(\nabla \overline{u}, \nabla
\overline{u})$, $F_i(\cdot,\cdot)$, $i=1,2,3$, are analytic
functions, and $Q_i(\cdot,\cdot)$, for $i=1,2,3$, are quadratic
forms. For system \eqref{t22}, there is no null structure in the
nonlinear term, so we can not expect to use the method in
\cite{BIK,BIKT,IK2} to get the regularity in $H^s$, $s<3/2$.
\end{remark}

In this paper, we use the methods of Ionescu and Kenig \cite{IK} for
Schrodinger map equation
\begin{eqnarray}\label{Sch}
\left \{\begin{array}{ll} \displaystyle (i\partial_t +\Delta) u
=\frac{2\bar{u}}{1+u\bar{u}}[(\partial_{x_{1}}u)^{2}+(\partial_{x_{2}}u)^{2}]
\\
u(0,x)=\phi(x).
\end{array}
\right.
\end{eqnarray}
We sketch the proof of our main theorem here. We study \eqref{t21}
in a space with high frequency spaces that have two components: an
$X^{\sigma,b}$-type component measured in the frequency space and a
normalized $L^{1,2}_{\mathbf{e}}$ component measured in the physical
space. Then we set up suitable linear($L^{\infty, 2}_{\mathbf{e}}$
smoothing estimate, $L^{2,\infty}_{\mathbf{e}}$ maximal function
estimate) and nonlinear(trilinear estimate, multilinear estimate)
estimates in these spaces, and conclude the results by a recursive
construction.

However, there are some differences between this paper and
\cite{IK}. Firstly, in view of bilinear estimate, the
$X^{\sigma,b}$-type spaces corresponding to non-elliptic group
$e^{it\square}$ are essentially different form the
$X^{\sigma,b}$-type spaces corresponding to elliptic group
$e^{it\Delta}$, see Onodera \cite{O} for detailed argument.
Secondly, the approach in \cite{IK} for the local smoothing estimate
depends on the elliptic property of the group $e^{it\Delta}$. We
develop another approach to get the local smoothing property, which
is based on the following ingredients: Denote $P(\tau,
\xi)=\tau+\xi_{1}^{2}-\xi_{2}^{2}$,
$\mathbf{e}=(\cos\theta,\sin\theta)\in \mathbb{S}^1$, and
$\overline{\mathbf{e}}=(\cos\theta,-\sin\theta)$, for some
$\theta\in [0,2\pi)$, we have
$$
\eta_k(\xi\cdot \overline{\mathbf{e}})\frac{1}{P(\tau,\xi)}\approx
\eta_k(\xi\cdot
\overline{\mathbf{e}})\frac{1}{2^k(\xi_{1}^\mathbf{e}-t_\e^*(\xi_2^\e,\tau))}.
$$
Which behaves like Hilbert transform in $\xi_{1}^{\e}$ direction,
see Lemma \ref{lp4*} below for further argument. Thirdly, the
estimate
\begin{eqnarray}\label{ap1}
\|\mathcal{F}_{(2+1)}^{-1}(f)\|_{L^{\infty, 2}_{\mathbf{e}}}\leq
C(k+1)\|f\|_{Z_{k}}
\end{eqnarray}
is false for non-elliptic type $Z_k$ space(see Remark \ref{re1}
below), which is the main ingredient in the proof of algebra
property(multilinear estimate) in \cite{IK}. We use bilinear
estimate to overcome this problem, the key point is the identity
(see \eqref{null} below)
\begin{eqnarray}\label{null0}
H(uv)
=(Hu)v+u(Hv)+[\partial_{x_1}u\partial_{x_1}v-\partial_{x_2}u\partial_{x_2}v],
\end{eqnarray}
where $H=(i\partial_t+\Box)$. This idea has first appeared in
\cite{IK2} as far as we knew. Fourthly, when $b\neq 0$, the
potential $\phi$ introduces a nonlocal term, we use the method in
\cite{S} to show that this nonlocal term behaves roughly like the
nonlinear term $(2\bar{u}/(1+|u|^2))(u_x^2-u_y^2)$. Finally, the
semi-group $e^{it\Delta}$ is invariant under rotation of the space,
but $e^{it\Box}$ is not. For example, if we rotate the $x$-axes
clockwise $\pi/4$, then $e^{it\Box}$ becomes to
$e^{it\partial_{x_1}\partial_{x_2}}$. So we need to be more careful
when we rotate the space to transform norm $L^{p, q}_{\mathbf{e}}$
to $L^{p}_{x_1}L^{q}_{x_2,t}$.

The rest of the paper is organized as follows. In Section 2 we
define some notations and the main resolution spaces. In Section 3
we establish some basic estimates. In Section 4 we prove the linear
estimates. In Section 5 and 6 we prove the main nonlinear estimates.
In the last section we prove the main theorem. The key ingredients
in these proofs are $L^{2,\infty}_{\mathbf{e}}$ (maximal function)
estimate in Lemma \ref{lp4}, $L^{\infty,2}_{\mathbf{e}}$ (local
smoothing) estimate in Lemma \ref{lp5} and the algebra property in
Lemma \ref{n2}.

\section{Notations and main resolution spaces}

Let $\eta_0 : \mathbb{R}\rightarrow [0,1]$ be a smooth even function
supported in the set $\{\mu\in \mathbb{R} : |\mu|\leq 2\}$ and equal
to 1 in the set $\{\mu\in \mathbb{R} : |\mu|\leq 1/2\}$. We define
$\eta_k : \mathbb{R}\rightarrow [0,1], k=1,2,\cdots$,
$$
\eta_k(\mu)=\eta_0(\mu/2^k)-\eta_0(\mu/2^{k-1})
$$
and $\eta_{k}^{(2)} : \mathbb{R}^{2}\rightarrow [0,1],
k\in\mathbb{Z}_{+}$, $ \eta_{k}^{(2)}(\mu)=\eta_k(|\mu|)$. The
smooth cut-off functions $\chi_{k,l}$
\begin{eqnarray*}
\left \{\begin{array}{ll}\chi_{k,l}(r)=[1-\eta_{0}(r/2^{k-l})]
\text{ if } k\geq 100
\\
\chi_{k,l}(r)=1\text{ if } k\leq 99.
\end{array}
\right.
\end{eqnarray*}

Now we begin to define the normed spaces $X_{k}$ and $Y_{k}$. The
Fourier transform of the linear part of \eqref{t21} in the original
coordinate is
$$
\mathcal{F}_{(2+1)}[(i\partial_t +\square)
u](\xi,\tau)=-(\tau+\xi_{1}^{2}-\xi_{2}^{2})\mathcal{F}_{(2+1)}(u)(\xi,\tau).
$$
We denote
\begin{eqnarray}\label{P}
P(\tau,\xi)=\tau+\xi_{1}^{2}-\xi_{2}^{2}.
\end{eqnarray}
For $\xi\in \mathbb{R}^2$, $\overline{\xi}$ denotes the vector
conjugate to $\xi$, say, if $\xi=(\xi_1,\xi_2)$, then
$\overline{\xi}=(\xi_1,-\xi_2)$. For $k\in \mathbb{Z}_{+}$, $j\in
\mathbb{Z}_{+}$ and $\mathbf{e}\in \mathbb{S}^1\subset \R^2$, denote
\begin{eqnarray*}
\left \{\begin{array}{ll} D_{k,j}=\{(\xi,\tau)\in
\mathbb{R}^{2}\times\mathbb{R}: |\xi|\in [2^{k-1},2^{k+1}]\text{ and
}|P(\tau,\xi)|\leq 2^{j+1}\};
\\
D_{0,j}=\{(\xi,\tau)\in \mathbb{R}^{2}\times\mathbb{R}:
|\xi|\leq2\text{ and }|P(\tau,\xi)|\leq 2^{j+1}\};
\\
D_{k,\infty}=\cup_{j\geq0}D_{k,j};\\
D_{k,j}^\mathbf{e}=\{(\xi,\tau)\in D_{k,j}:
|\xi\cdot\overline{\mathbf{e}}|\geq |\xi|/2\}\text{ for }
j\in\mathbb{Z}_{+}\text{ and } j=\infty.
\end{array}
\right.
\end{eqnarray*}
We define $X_{k}$ normed spaces by
\begin{eqnarray}\label{X_{k}}
\left \{\begin{array}{ll} X_{k}=\{f\in
L^{2}(\mathbb{R}^{2}\times\mathbb{R}):
\text{supp}f\subset D_{k,\infty}; \|f\|_{X_{k}}<\infty\}\\
\text{where
}\|f\|_{X_{k}}=\sum_{j=0}^{\infty}2^{j/2}\|\eta_{j}(P(\tau,\xi))f\|_{L^{2}}.
\end{array}
\right.
\end{eqnarray}
For any vector $\mathbf{e}\in \mathbb{S}^1$, we write
$\mathbf{e}=(\cos\theta,\sin\theta)$, for some $\theta\in [0,2\pi)$,
and $\mathbf{e}^{\perp}=(-\sin\theta,\cos\theta)\in \mathbb{S}^{1}$
perpendicular to $\mathbf{e}$. For $p,q\in [1, \infty]$ the normed
spaces $L^{p,q}_{\mathbf{e}}=
L^{p,q}_{\mathbf{e}}(\mathbb{R}^{2}\times\mathbb{R})$ is defined by
$$
L^{p,q}_{\mathbf{e}}=\left\{f\in
L^{2}(\mathbb{R}^{2}\times\mathbb{R});\quad
\|f\|_{L^{p,q}_{\mathbf{e}}}=\left[\int_{\mathbb{R}}\left[\int_{\mathbb{R}^{2}}|
f(r\e+v\mathbf{e}^{\perp})|^{q}dvdt\right]^{p/q}dr\right]^{1/p}<\infty\right\}.
$$
For $k\geq 100$ and $\mathbf{e}\in \mathbb{S}^1$ we define the
$Y_{k}^\e$ normed spaces
\begin{eqnarray}\label{Y_{k}}
\left \{\begin{array}{ll} Y^\mathbf{e}_{k}=\{f\in
L^{2}(\mathbb{R}^{2}\times\mathbb{R}): \text{supp} f \subset
D^\mathbf{e}_{k,\infty},\quad  \|f\|_{Y^\mathbf{e}_k}<
\infty\} \\
\text{where
}\|f\|_{Y^\mathbf{e}_k}=2^{-k/2}\|\mathcal{F}^{-1}[(P(\tau,\xi)+i)\cdot
f]\|_{L^{1,2}_{\mathbf{e}}}.
\end{array}
\right.
\end{eqnarray}
For the cases $k=0,1,2,\ldots,99$, $Y_{k}^\mathbf{e}=\{0\}$.

Fix $L$ large enough, let $\{\mathbf{e}_{l}\}_{l=1}^{L}\subset
\mathbb{S}^{1}$ satisfy
\begin{enumerate}
                  \item $\mathbf{e}_{l}\neq \mathbf{e}_{l'}$ if $l\neq
                  l'$,
                  \item for any $\mathbf{e}\in \mathbb{S}^{1}$ there is $\mathbf{e}_l$
          such that $|\mathbf{e}_{l}-\mathbf{e}|\leq 2^{-50}$,
                  \item if $\mathbf{e}\in
\{\mathbf{e}_{l}\}_{l=1}^{L}$, then
$-\mathbf{e}\in\{\mathbf{e}_{l}\}_{l=1}^{L}$.
\end{enumerate}
For $k\in \mathbb{Z}_{+}$, we define
\begin{eqnarray}\label{Z_{k}}
Z_{k}=X_{k}+Y_{k}^{\mathbf{e}_{1}}+\ldots +Y^{\mathbf{e}_{L}}_{k}
\end{eqnarray}

\section{Preliminary lemmas}

In this section we prove some preliminary lemmas which will be used
frequently in the following sections.
\begin{lemma}[Spaces Decomposition]\label{lp1}
For $f\in Z_{k}$, in view of the definitions, we can write
\begin{eqnarray}\label{lp11}
\left \{\begin{array}{ll}
f=\sum_{j\in\mathbb{Z}_{+}}g_{j}+f_{\mathbf{e}_{1}}+\ldots
+f_{\mathbf{e}_{L}}\text{ where supp}g_{j}\subset D_{k,j},\text{ and } f_{\mathbf{e}_{l}}\in Y_{k}^{\mathbf{e}_{l}}\\
\sum_{j\in\mathbb{Z}_{+}}2^{j/2}\|g_{j}\|_{L^{2}}+\|f_{\mathbf{e}_{1}}\|_{Y_{k}^{\mathbf{e}_{1}}}+\ldots
+\|f_{\mathbf{e}_{L}}\|_{Y_{k}^{\mathbf{e}_{L}}}\leq 2\|f\|_{Z_{k}}.
\end{array}
\right.
\end{eqnarray}
\end{lemma}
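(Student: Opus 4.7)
The plan is to unpack the infimum in the definition of $Z_k$ and then apply the tautological dyadic decomposition built into the $X_k$ norm; the lemma is essentially a repackaging of the definitions \eqref{X_{k}}, \eqref{Y_{k}}, \eqref{Z_{k}}.

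First I would invoke the definition of $Z_k = X_k + Y_k^{\e_1} + \ldots + Y_k^{\e_L}$ as a sum of Banach spaces: its norm is the infimum of $\|f_X\|_{X_k} + \sum_l \|f_{\e_l}\|_{Y_k^{\e_l}}$ over all admissible decompositions $f = f_X + \sum_l f_{\e_l}$ with $f_X \in X_k$ and $f_{\e_l} \in Y_k^{\e_l}$. Picking an almost-optimal decomposition (achieving the infimum up to a multiplicative factor of $1+\delta$ for any small $\delta>0$) already gives
\[
\|f_X\|_{X_k} + \sum_{l=1}^{L} \|f_{\e_l}\|_{Y_k^{\e_l}} \leq (1+\delta)\|f\|_{Z_k},
\]
which absorbs into the factor $2$ on the right-hand side of \eqref{lp11}.

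Second, I would carry out the modulation decomposition of $f_X$. The functions $\{\eta_j(P(\tau,\xi))\}_{j\geq 0}$ form a partition of unity on $\R$ by construction of $\eta_k$, so setting $g_j := \eta_j(P(\tau,\xi))\,f_X$ yields $f_X = \sum_{j\geq 0} g_j$ in $L^2$, together with the identity $\sum_{j\geq 0} 2^{j/2}\|g_j\|_{L^2} = \|f_X\|_{X_k}$ directly from the definition \eqref{X_{k}}. The remaining verification is the support condition $\supp g_j \subset D_{k,j}$: the multiplier $\eta_j(P(\tau,\xi))$ forces $|P(\tau,\xi)| \leq 2^{j+1}$, and membership in $X_k$ forces $|\xi| \in [2^{k-1},2^{k+1}]$ (or $|\xi|\leq 2$ when $k=0$), which is exactly the definition of $D_{k,j}$.

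Concatenating the two steps yields the claimed decomposition $f = \sum_j g_j + f_{\e_1} + \ldots + f_{\e_L}$ with combined sum of norms bounded by $2\|f\|_{Z_k}$. I do not anticipate a genuine obstacle: the lemma is built into the definitions, and the only formal point to check is that the dyadic partition in the modulation variable $P(\tau,\xi)$ refines the frequency annulus $\{|\xi|\sim 2^k\}$ inherited from $X_k$ into the sets $D_{k,j}$, which is automatic.
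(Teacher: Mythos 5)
Your argument is correct and is exactly the definitional unpacking that the paper intends (the paper itself offers no proof beyond the phrase ``in view of the definitions''): take a near-optimal decomposition $f = f_X + \sum_l f_{\e_l}$ realizing the sum-space norm of $Z_k$ up to a factor $<2$, then observe that $g_j := \eta_j(P(\tau,\xi))\,f_X$ satisfies $\sum_j g_j = f_X$, $\supp g_j \subset D_{k,j}$, and $\sum_j 2^{j/2}\|g_j\|_{L^2} = \|f_X\|_{X_k}$ term-by-term from \eqref{X_{k}}. No gaps.
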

\begin{lemma}\label{lp2}
Fix $\theta\in[0,2\pi)$, $\mathbf{e}=(\cos\theta,\sin\theta)$,
$\mathbf{e}^{\perp}=(-\sin\theta,\cos\theta)$,
$\overline{\mathbf{e}}=(\cos\theta,-\sin\theta)$. Let
$\xi=(\xi_1,\xi_2)$ be in the original coordinate. If we write
$\xi=\xi_{1}^\mathbf{e}\mathbf{e}+\xi_{2}^\mathbf{e}\mathbf{e}^{\perp}$,
 then
\begin{eqnarray}\label{lp22}
\partial_{\xi_{1}^\mathbf{e}} P(\tau,\xi)=\partial_{\xi_{1}^\mathbf{e}}(\xi_{1}^{2}-\xi_{2}^{2}) =2\xi\cdot
\overline{\mathbf{e}},
\end{eqnarray}
where $P(\tau,\xi)$ is defined in \eqref{P}.
\end{lemma}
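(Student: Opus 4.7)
The identity is essentially a chain-rule computation, so the plan is to set up the rotation explicitly and differentiate. Since $\tau$ does not depend on the spatial frequency, $\partial_{\xi_1^{\mathbf{e}}} P(\tau,\xi) = \partial_{\xi_1^{\mathbf{e}}}(\xi_1^2 - \xi_2^2)$ is immediate from the definition \eqref{P}, so only the second equality in \eqref{lp22} requires work.

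First I would expand the change of coordinates $\xi = \xi_1^{\mathbf{e}} \mathbf{e} + \xi_2^{\mathbf{e}} \mathbf{e}^\perp$ componentwise to obtain
\begin{equation*}
\xi_1 = \xi_1^{\mathbf{e}} \cos\theta - \xi_2^{\mathbf{e}} \sin\theta, \qquad \xi_2 = \xi_1^{\mathbf{e}} \sin\theta + \xi_2^{\mathbf{e}} \cos\theta,
\end{equation*}
from which $\partial_{\xi_1^{\mathbf{e}}} \xi_1 = \cos\theta$ and $\partial_{\xi_1^{\mathbf{e}}} \xi_2 = \sin\theta$ follow by inspection.

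Then I would apply the chain rule to $\xi_1^2 - \xi_2^2$ and obtain $2\xi_1 \cos\theta - 2\xi_2 \sin\theta$. The final step is to recognize this as $2\xi \cdot \overline{\mathbf{e}}$: by the definition $\overline{\mathbf{e}} = (\cos\theta, -\sin\theta)$, the dot product $\xi \cdot \overline{\mathbf{e}}$ equals exactly $\xi_1 \cos\theta - \xi_2 \sin\theta$. This closes the identity.

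There is no real obstacle, since the whole statement is a one-line chain-rule calculation; the only mild subtlety worth flagging is the sign bookkeeping that distinguishes $\mathbf{e}^\perp = (-\sin\theta, \cos\theta)$ from $\overline{\mathbf{e}} = (\cos\theta, -\sin\theta)$. This distinction is precisely why the \emph{conjugate} vector $\overline{\mathbf{e}}$, rather than $\mathbf{e}$ itself, appears on the right-hand side of \eqref{lp22}, and it reflects the fact that the characteristic cone of the non-elliptic operator $\partial_{x_1}^2 - \partial_{x_2}^2$ is not invariant under spatial rotations — a point repeatedly emphasized in the introduction and used later when rotating the $L^{p,q}_{\mathbf{e}}$ norms into standard product norms.
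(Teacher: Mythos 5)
Your proof is correct and follows essentially the same route as the paper: expand the rotation componentwise, apply the chain rule to $\xi_1^2 - \xi_2^2$, and identify the result as $2\xi\cdot\overline{\mathbf{e}}$. The paper's only cosmetic difference is that it substitutes the rotated coordinates into $P(\tau,\xi)$ first and then differentiates, which yields the same computation.
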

\begin{proof}[\textbf{Proof of Lemma \ref{lp2}}]
First from $\xi=(\xi_{1},\xi_{2})$ and
$\xi=\xi_{1}^\mathbf{e}\mathbf{e}+\xi_{2}^\mathbf{e}\mathbf{e}^{\perp}$,
we have
\begin{eqnarray}\label{co}
\left \{\begin{array}{ll} \displaystyle \xi^\mathbf{e}_{1}=\xi\cdot
\mathbf{e}=\xi_{1}\cos\theta+\xi_{2}\sin\theta
\\
\xi^\mathbf{e}_{2}=\xi\cdot
\mathbf{e}^{\perp}=-\xi_{1}\sin\theta+\xi_{2}\cos\theta
\end{array}
\right. \text{ and so } \left \{\begin{array}{ll} \displaystyle
\xi_{1}=\xi^\mathbf{e}_{1}\cos\theta-\xi^\mathbf{e}_{2}\sin\theta
\\
\xi_{2}=\xi^\mathbf{e}_{1}\sin\theta+\xi^\mathbf{e}_{2}\cos\theta
\end{array}
\right.
\end{eqnarray}
in the new coordinate $(\mathbf{e},\mathbf{e}^{\perp})$, we have
\begin{eqnarray}\label{pco}
P(\tau,\xi)&=&\tau+\xi_{1}^{2}-\xi_{2}^{2}\nonumber\\
&=&\tau+(\xi_{1}^\mathbf{e}\cos\theta-\xi_{2}^\mathbf{e}\sin\theta)^{2}-
(\xi_{1}^\mathbf{e}\sin\theta+\xi_{2}^\mathbf{e}\cos\theta)^{2}
\end{eqnarray}
by a simple calculation, we have
\begin{eqnarray*}
\partial_{\xi_{1}^\mathbf{e}}P(\tau,\xi)
 &=&2(\xi_{1}^\mathbf{e}\cos\theta-\xi_{2}^\mathbf{e}\sin\theta)\cos\theta-
2(\xi_{1}^\mathbf{e}\sin\theta+\xi_{2}^\mathbf{e}\cos\theta)\sin\theta\\
 &=&2\xi_{1}\cos\theta-2\xi_{2}\sin\theta\\
&=&2\xi\cdot \overline{\mathbf{e}}.
\end{eqnarray*}
\end{proof}

\begin{lemma}\label{lp3}(Multipliers)
1.If $m\in L^{\infty}(\mathbb{R}^{2})$,
$\mathcal{F}^{-1}_{(2)}(m)\in L^{1}(\mathbb{R}^{2})$, and $f\in
Z_{k}$, then $m(\xi)\cdot f\in Z_{k}$ and
\begin{eqnarray}\label{lp3a}
 \|m(\xi)\cdot f\|_{Z_{k}}\leq
C\|\mathcal{F}^{-1}_{(2)}(m)\|_{L^{1}(\mathbb{R}^{2})}\cdot\|f\|_{Z_{k}}\hspace{0cm}
\end{eqnarray}
2. If $f\in Z_{k}$, $k,j\in \mathbb{Z}_{+}$, and $C_1\in \mathbb{R}$
is a constant, then
\begin{eqnarray}\label{lp3b}
 \left \{\begin{array}{ll}
\|\eta_{j}(P(\tau,\xi))\cdot f\|_{X_{k}}\leq C\|f\|_{Z_{k}}\\
\|\eta_{>2k+C_1}(P(\tau,\xi))\cdot f\|_{X_{k}}\leq C\|f\|_{Z_{k}}
\\\|f\|_{X_{k}}\leq C(k+1)\|f\|_{Z_{k}}.
\end{array}\hspace{0cm}
\right.
\end{eqnarray}
3. If $f\in Z_{k}$, $k,j\in \mathbb{Z}_{+}$, then
\begin{eqnarray}\label{lp3c}
\|\eta_{\leq j}(P(\tau,\xi))\cdot f\|_{Z_{k}}\leq
C\|f\|_{Z_{k}}\hspace{0cm}
\end{eqnarray}where $P(\tau,\xi)$ is defined in \eqref{P}.
\end{lemma}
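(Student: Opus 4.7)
The plan is to reduce each bound to estimates on the building blocks $X_k$ and $Y_k^{\mathbf{e}}$ of $Z_k$ via the decomposition Lemma \ref{lp1}. On $X_k$ almost everything reduces to near-orthogonality in the modulation variable $P$; on $Y_k^{\mathbf{e}}$ the main work is to convert the mixed norm $\|g\|_{L^{1,2}_{\mathbf{e}}}$ (where $g=\mathcal{F}^{-1}[(P+i)f]$) into $L^2_{\xi,\tau}$ control of $\widehat{g}$ on thin modulation strips, using the non-degeneracy $|\partial_{\xi_1^{\mathbf{e}}}P|=2|\xi\cdot\overline{\mathbf{e}}|\gtrsim 2^k$ from Lemma \ref{lp2}.

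Part 1 is essentially a commutation observation combined with Young's inequality. Since $m(\xi)$ depends only on $\xi$, it commutes with $\eta_j(P)$ and with $P+i$, so on the $X_k$ side it is pointwise multiplication in $L^2_{\xi,\tau}$ with $\|m\|_{L^{\infty}}\leq\|\mathcal{F}^{-1}_{(2)}(m)\|_{L^1}$, and on the $Y_k^{\mathbf{e}}$ side it is translation-invariant convolution in $x$ with $\mathcal{F}^{-1}_{(2)}(m)$, which preserves $L^{1,2}_{\mathbf{e}}=L^1_r L^2_{v,t}$ with operator norm $\|\mathcal{F}^{-1}_{(2)}(m)\|_{L^1}$. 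Combining both yields \eqref{lp3a}.

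For Part 2 the central estimate is the restriction-type bound
\[
\bigl\|\chi_{\{|P|\sim 2^j\}}\widehat{g}\bigr\|_{L^2_{\xi,\tau}}\leq C\,2^{(j-k)/2}\|g\|_{L^{1,2}_{\mathbf{e}}}\qquad(j\leq 2k+O(1)),
\]
proved by changing variable $\xi_1^{\mathbf{e}}\to\rho=P$, whose Jacobian is $\sim 2^k$ by Lemma \ref{lp2}, so that each $\xi_1^{\mathbf{e}}$-slice on $\{|P|\sim 2^j\}$ has measure $\sim 2^{j-k}$, combined with the Hausdorff-Young-type bound $\|\widehat{g}\|_{L^{\infty}_{\xi_1^{\mathbf{e}}} L^2_{\xi_2^{\mathbf{e}},\tau}}\leq\|g\|_{L^{1,2}_{\mathbf{e}}}$ (Plancherel in $(v,t)$, Hausdorff-Young in $r$). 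Inserting this into $\eta_j(P)f=[\eta_j(P)/(P+i)](P+i)f$ with the pointwise bound $|\eta_j(P)/(P+i)|\lesssim 2^{-j}$ for $j\geq 1$ gives $2^{j/2}\|\eta_j(P)f_{\mathbf{e}}\|_{L^2}\lesssim\|f_{\mathbf{e}}\|_{Y_k^{\mathbf{e}}}$ uniformly in $j\leq 2k+O(1)$, which is the first inequality of \eqref{lp3b}. For $j>2k+C_1$ I substitute the Bernstein bound $\|g\|_{L^2_{x,t}}\leq C\,2^{k/2}\|g\|_{L^{1,2}_{\mathbf{e}}}$ (from Fourier support $|\xi_1^{\mathbf{e}}|\lesssim 2^k$), which makes $\sum_{j>2k+C_1}2^{j/2}\|\eta_j(P)f_{\mathbf{e}}\|_{L^2}$ a convergent geometric series bounded by $\|f_{\mathbf{e}}\|_{Y_k^{\mathbf{e}}}$, giving the second inequality. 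The third inequality $\|f\|_{X_k}\leq C(k+1)\|f\|_{Z_k}$ then follows by splitting $f=\eta_{\leq 2k+C_1}(P)f+\eta_{>2k+C_1}(P)f$ and summing the first bound over the $O(k)$ dyadic scales in the low range.

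The hardest step is Part 3. The $X_k$ contribution $\|\eta_{\leq j}(P)g\|_{X_k}\leq C\|g\|_{X_k}$ is immediate since $\eta_{j'}(P)\eta_{\leq j}(P)$ is supported in $j'\leq j+O(1)$ with uniform $L^{\infty}$ bound. The subtle point is $\|\eta_{\leq j}(P)f_{\mathbf{e}}\|_{Y_k^{\mathbf{e}}}\leq C\|f_{\mathbf{e}}\|_{Y_k^{\mathbf{e}}}$ with $C$ independent of both $j$ and $k$, equivalently the boundedness on $L^{1,2}_{\mathbf{e}}$ of the Fourier multiplier $T_j:g\mapsto\mathcal{F}^{-1}[\eta_{\leq j}(P)\widehat{g}]$ for $g$ with Fourier support in $D^{\mathbf{e}}_{k,\infty}$. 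After partial Fourier transform in $(v,t)$, $T_j$ becomes, fibrewise in $(\xi_2^{\mathbf{e}},\tau)$, convolution in $r=x\cdot\mathbf{e}$ with a 1D kernel; the change of variable $\xi_1^{\mathbf{e}}\to\rho=P(\tau,\xi)$ (Jacobian $\sim 2^k$ by Lemma \ref{lp2}, two monotone branches) and linearization around the zero $\xi_1^{\mathbf{e}}=t_{\mathbf{e}}^{*}(\xi_2^{\mathbf{e}},\tau)$ identify this kernel with a rescaled 1D inverse Fourier transform of $\eta_{\leq j}$ times the modulation $e^{ir\,t_{\mathbf{e}}^{*}}$, whose $L^1_r$ norm is uniform in $(\xi_2^{\mathbf{e}},\tau,j,k)$. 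Young's inequality in $r$ then gives the desired bound. The main obstacle lies in tracking the curvature corrections to the linearization and the two-branch structure in the change of variable; for $j$ so large that the linearization fails, $\eta_{\leq j}(P)\equiv 1$ on the relevant support and the bound becomes trivial.
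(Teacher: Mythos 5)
Parts 1 and 2 of your proposal are essentially the paper's argument and are correct. For Part 1 the paper simply says it ``follows directly from the definition,'' which is exactly your observation that $m(\xi)$ acts pointwise on the $X_k$ pieces and as an $x$-convolution by $\mathcal{F}^{-1}_{(2)}(m)$ on the $Y_k^{\mathbf{e}}$ pieces, both bounded by $\|\mathcal{F}^{-1}_{(2)}(m)\|_{L^1}$. For Part 2 your restriction-type estimate is precisely the paper's \eqref{es32}, proved the same way: the fibrewise $L^\infty_{\xi_1^{\mathbf{e}}}$ bound from Hausdorff--Young in $r$ together with the measure bound \eqref{*} for the $\xi_1^{\mathbf{e}}$-slice of $D_{k,j}$, which comes from $|\partial_{\xi_1^{\mathbf{e}}}P|=2|\xi\cdot\overline{\mathbf{e}}|\gtrsim 2^k$. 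Your treatment of the ranges $j\leq 2k+O(1)$ and $j>2k+O(1)$ (the latter by Bernstein) and the resulting $(k+1)$ loss are the same as in the paper.

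Part 3 is where you and the paper genuinely diverge, and your version has two soft spots. First, the paper does not linearize $\xi_1^{\mathbf{e}}\mapsto P$; it bounds the fibrewise kernel
\[
K(x_1^{\mathbf{e}};\xi_2^{\mathbf{e}},\tau)=\int e^{ix_1^{\mathbf{e}}\xi_1^{\mathbf{e}}}\,\eta_{\leq j}(P)\,\eta_k(\xi)\,\chi_{k,5}(\xi\cdot\overline{\mathbf{e}})\,d\xi_1^{\mathbf{e}}
\]
directly, using the crude facts $|\partial_{\xi_1^{\mathbf{e}}}P|\lesssim 2^k$, $|\partial^2_{\xi_1^{\mathbf{e}}}P|\leq 1$ and the measure bound \eqref{*}. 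Two integrations by parts give $|K|\lesssim 2^{j-k}\bigl(1+(2^{j-k}x_1^{\mathbf{e}})^2\bigr)^{-1}$, which is manifestly in $L^1_{x_1^{\mathbf{e}}}$ with norm $O(1)$, uniformly in $(\xi_2^{\mathbf{e}},\tau,j,k)$ for $j\leq 2k-100$. This entirely sidesteps the change-of-variable branches and the curvature corrections you flag as the ``main obstacle'': you do not need to solve for $\xi_1^{\mathbf{e}}(\rho)$ or track the error in the linearization at all, since the second derivative $\partial^2_{\xi_1^{\mathbf{e}}}P=\cos^2\theta-\sin^2\theta$ is uniformly $O(1)$ and gets absorbed into the second integration by parts. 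If you insist on linearizing you can check that the phase error over the relevant window $|x_1^{\mathbf{e}}|\lesssim 2^{k-j}$ is $O(2^{j-2k})\leq 2^{-100}$, but this is extra work. Second, and more importantly, your closing sentence for the complementary range of $j$ is wrong as stated: for $j$ moderately above $2k$, $\eta_{\leq j}(P)$ is \emph{not} identically $1$ on the support of a $Y_k^{\mathbf{e}}$ function, since that support extends to arbitrarily large $|P|$. The correct reduction (which the paper uses, silently) is to write $\eta_{\leq j}(P)f=\eta_{\leq 2k-101}(P)f+\bigl[\eta_{\leq j}-\eta_{\leq 2k-101}\bigr](P)f$ and control the second piece in $X_k\subset Z_k$ by the second inequality of \eqref{lp3b}, which you have already proved. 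With this patch, and replacing the linearization by the direct integration-by-parts bound, Part 3 is complete.
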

\begin{proof}[\textbf{Proof of Lemma \ref{lp3}}]Clearly,
\eqref{lp3a} follows directly from the definition. Now we turn to
\eqref{lp3b}. In view of Lemma \ref{lp1}, we can assume $k\geq 100$,
and $f=f_{\mathbf{e}}\in Y_{k}^\mathbf{e}$. Let
\begin{eqnarray*}
h_{\mathbf{e}}(x,t)=2^{-k/2}\mathcal{F}^{-1}_{(2+1)}[(P(\tau,\xi)+i)\cdot
f_{\mathbf{e}}](x,t),
\end{eqnarray*}
thus
\begin{eqnarray}\label{he}
f_{\mathbf{e}}(\xi,\tau)=\chi_{k,5}(\xi\cdot
\overline{\mathbf{e}})\cdot
\frac{2^{k/2}}{P(\tau,\xi)+i}\mathcal{F}_{(2+1)}(h_{\mathbf{e}})(\xi,\tau)
\end{eqnarray} where $\overline{\mathbf{e}}$ is the same as in Lemma \ref{lp2},
 and $\|f_{\mathbf{e}}\|_{Y_{k}^\mathbf{e}}=\|h_{\mathbf{e}}\|_{L_{\mathbf{e}}^{1,2}}$.
By the definition, for \eqref{lp3b} it suffices to prove that
\begin{eqnarray}\label{es32}
2^{k/2}2^{-j/2}\big\|1_{D_{k,j}}(\xi,\tau) \chi_{k,5}(\xi\cdot
\overline{\mathbf{e}})\cdot\mathcal{F}_{(2+1)}(h)(\xi,\tau)\big\|_{L^{2}_{\xi,\tau}}\leq
C(1+2^{j-2k})^{-1/2}\|h\|_{L_{\mathbf{e}}^{1,2}}
\end{eqnarray}
for any $h\in \mathcal{S}(\mathbb{R}^{2}\times\mathbb{R})$ and $j\in
\mathbb{Z}_{+}$. We write
$\xi=\xi^\mathbf{e}_{1}\e+\xi^\mathbf{e}_{2}\mathbf{e}^{\perp}$,
$x=x^\mathbf{e}_{1}\e+x^\mathbf{e}_{2}\mathbf{e}^{\perp}$, and
\begin{eqnarray*}
h'(x^\mathbf{e}_{1},\xi_{2}^\mathbf{e},\tau)=\int_{\mathbb{R}\times
\mathbb{R}}h(x^\mathbf{e}_{1}\e+x^\mathbf{e}_{2}\mathbf{e}^{\perp},t)e^{-i(x^\mathbf{e}_{2}
\xi^\mathbf{e}_{2}+t\tau)}dx^\mathbf{e}_{2}dt.
\end{eqnarray*}
Thus
\begin{eqnarray*}
\mathcal{F}_{(2+1)}(h)(\xi^\mathbf{e}_{1}\e+
\xi^\mathbf{e}_{2}\mathbf{e}^{\perp},\tau)=\int_{\mathbb{R}}h'(x^\mathbf{e}_{1},\xi_{2}^\mathbf{e},\tau)e^{-ix^\mathbf{e}_{1}
\xi^\mathbf{e}_{1}}dx^\mathbf{e}_{1},
\end{eqnarray*}
and by Plancherel theorem, we get that
$\|h\|_{L^{1,2}_{\mathbf{e}}}=C\|h'\|_{L_{x^\mathbf{e}_{1}}^{1}L_{\xi^\mathbf{e}_{2},\tau}^{2}}$.
Thus, for \eqref{es32} it suffices to prove that
\begin{eqnarray*}
&&2^{(k-j)/2}\Big\|1_{D_{k,j}}\cdot \chi_{k,5}(\xi\cdot
\overline{\mathbf{e}})\cdot
\int_{\mathbb{R}}h'(x_{1}^\mathbf{e},\xi_{2}^\mathbf{e},\tau)e^{-ix_{1}^\mathbf{e}\xi_{1}^\mathbf{e}}dx_{1}^\mathbf{e}
\Big\|_{L^{2}_{\xi_{1}^\mathbf{e},\xi_{2}^\mathbf{e},\tau}}\\&\leq&
C(1+2^{j-2k})^{-1/2}\|h'\|_{L_{x_{1}^\mathbf{e}}^{1}L_{\xi_{2}^\mathbf{e},\tau}^{2}}.
\end{eqnarray*}
By H\"{o}lder's inequality, it suffices to prove that
\begin{eqnarray}\label{*}
|\{\xi_{1}^\mathbf{e}: |\xi|\leq 2^{k}, |\xi\cdot
\overline{\mathbf{e}}|\geq 2^{k-10}\text{ and } |P(\tau,\xi)|\leq
2^{j+1}\}|\leq C\min(2^{j-k},2^{k}).
\end{eqnarray}
This follows easily from \eqref{lp22}.

Now we turn to prove \eqref{lp3c}. In view of \eqref{lp3b}, we can
assume that $k\geq 100$. By the definition, it suffices to prove
that
\begin{eqnarray}\label{lp33}
\|\mathcal{F}_{(2+1)}^{-1}[\eta_{\leq j}(P(\tau,\xi))\cdot f\cdot
\eta_{k}(\xi)\cdot\chi_{k,5}(\xi\cdot
\overline{\mathbf{e}})]\|_{L^{1,2}_{\mathbf{e}}}\leq
C\|\mathcal{F}_{(2+1)}^{-1}(f)\|_{L_{\mathbf{e}}^{1,2}},
\end{eqnarray}
for $j\leq 2k-100$. We write
$\xi=\xi^\mathbf{e}_{1}\e+\xi^\mathbf{e}_{2}\mathbf{e}^{\perp}$,
$x=x^\mathbf{e}_{1}\e+x^\mathbf{e}_{2}\mathbf{e}^{\perp}$. Using
Plancherel theorem and H\"{o}lder's, for \eqref{lp33} it suffices to
show
\begin{eqnarray}\label{lp35}
\left\|\int_{\mathbb{R}}e^{ix^\mathbf{e}_{1}\xi^\mathbf{e}_{1}}\eta_{\leq
j}(P(\tau,\xi))\cdot \eta_{k}(\xi)\cdot\chi_{k,5}(\xi\cdot
\overline{\mathbf{e}})d\xi_{1}^\mathbf{e}\right\|_{L^{1}_{x_{1}^\mathbf{e}}L^{\infty}_{\xi_{2}^\mathbf{e},\tau}}\leq
C.
\end{eqnarray}
In view of \eqref{lp22} and \eqref{pco}, we have that
$|\partial_{\xi_{1}^\mathbf{e}}P(\tau,\xi)|=2|\xi\cdot
\overline{\mathbf{e}}|\leq C2^{k}$ and
$|\partial^{2}_{\xi_{1}^\mathbf{e}}P(\tau,\xi)|=|\cos^2\theta-\sin^2\theta|\leq
1$. From integration by parts and \eqref{*}, we get that
\begin{eqnarray}\label{lp36}
\left|\int_{\mathbb{R}}e^{ix^\mathbf{e}_{1}\xi^\mathbf{e}_{1}}\eta_{\leq
j}(P(\tau,\xi))\cdot \eta_{k}(\xi)\cdot\chi_{k,5}(\xi\cdot
\overline{\mathbf{e}})d\xi_{1}^\mathbf{e}\right|\leq
C\frac{2^{j-k}}{1+(2^{j-k}x_{1}^\mathbf{e})^{2}},
\end{eqnarray}
which gives \eqref{lp35}.
\end{proof}

\begin{lemma}[Representation for $Y_{k}^\mathbf{e}$ functions]\label{lp4*}
If $k\geq 100$, $\mathbf{e}\in \mathbb{S}^{1}$,
$\mathbf{e}=(\cos\theta,\sin\theta)$ and $f\in Y_k^{\e}$ then we can
write
\begin{eqnarray}\label{es2*}
&&f^\mathbf{e}(\xi_{1}^\mathbf{e}\mathbf{e}+\xi_{2}^\mathbf{e}\mathbf{e}^{\perp},\tau)\nonumber\\
&=&2^{k/2}\frac{\eta_{\leq k-100}(\xi_{1}^\mathbf{e}-t^*_\mathbf{e})
\chi_{k,5}(M^*_\e)}{\xi_{1}^\mathbf{e}-t^*_\mathbf{e}+i/2^{k}}\frac{1}{2M^*_\e}
\int_{\mathbb{R}}e^{-iy_{1}^\mathbf{e}\xi_{1}^\mathbf{e}}h(y_{1}^\mathbf{e},
\xi_{2}^\mathbf{e},\tau)dy_{1}^\mathbf{e}+g
\end{eqnarray}
where $\xi^\mathbf{e}_1,\xi_{2}^\mathbf{e},\tau\in\R$, and
$t_\mathbf{e}^{*}=t^*_\mathbf{e}(\xi_{2}^\mathbf{e},\tau)$ satisfies
$
P(\tau,t_\mathbf{e}^{*}\mathbf{e}+\xi_{2}^\mathbf{e}\mathbf{e}^{\perp})=0
$, that is
\begin{eqnarray}\label{es22}
\tau+(t_\mathbf{e}^{*}\cos\theta-\xi_{2}^\mathbf{e}\sin\theta)^{2}-
(t_\mathbf{e}^{*}\sin\theta+\xi_{2}^\mathbf{e}\cos\theta)^{2}=0
\end{eqnarray}
Denote
\begin{eqnarray}\label{mm}
M_\mathbf{e}^{*}=M_\mathbf{e}^{*}(\xi_{2}^\mathbf{e},\tau)=t^*_\mathbf{e}(\cos^2\theta-\sin^2\theta)
-2\xi_{2}^\mathbf{e}\cos\theta\sin\theta
\end{eqnarray}
Furthermore, we have
\begin{enumerate}
\item\label{it:0} $t^*_\e\in \R$ in the support of $f^\e$.
\item\label{it:1} $M_\mathbf{e}^{*}= (t_\e^*\e+\xi_2^\e\e^\perp)\cdot\overline{\e} \sim 2^k$ in the support of $f^\e$.
\item\label{it:2} For the $g$ and $h$ defined above, we
  have
  \begin{equation}\label{pp3}
  ||g||_{X_k}+||h||_{L^1_{y^\mathbf{e}_1}L^2_{\xi_{2}^\mathbf{e},\tau}}\leq
  C ||f^\mathbf{e}||_{Y^\mathbf{e}_k}.
  \end{equation}
\item\label{it:3} In the support of $f^\e$, we have
  \begin{equation}\label{pp4}
  |\partial_{\tau}t^*_\mathbf{e}(\xi_{2}^\mathbf{e},
  \tau)|\geq C2^{-k}.  \end{equation}
\end{enumerate}
\end{lemma}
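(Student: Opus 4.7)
The plan is to start from the defining representation \eqref{he} of $f^\e\in Y_k^\e$, namely
$$f^\e(\xi,\tau)=\chi_{k,5}(\xi\cdot\overline{\e})\cdot\frac{2^{k/2}}{P(\tau,\xi)+i}\mathcal{F}_{(2+1)}(h_\e)(\xi,\tau),\qquad \|h_\e\|_{L^{1,2}_\e}\le\|f^\e\|_{Y^\e_k},$$
and to explicitly approximate the symbol $1/(P+i)$ near its zero set by a Hilbert-type kernel in $\xi_1^\e$. Passing to the rotated coordinates $\xi=\xi_1^\e\e+\xi_2^\e\e^\perp$, Lemma \ref{lp2} gives $\partial_{\xi_1^\e}P=2\xi\cdot\overline{\e}$, which has size $\sim 2^k$ throughout the support by the cutoff $\chi_{k,5}$, while $\partial_{\xi_1^\e}^2 P=2\cos 2\theta$ is a bounded constant. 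Hence for each fixed $(\xi_2^\e,\tau)$, $P(\tau,\cdot)$ is a quadratic in $\xi_1^\e$ with exact Taylor expansion
$$P(\tau,\xi)=2M^*_\e(\xi_1^\e-t^*_\e)+\cos 2\theta\,(\xi_1^\e-t^*_\e)^2$$
around any of its roots $t^*_\e$; after dividing by $2M^*_\e\sim 2^k$ the leading behaviour of $1/(P+i)$ becomes $1/(\xi_1^\e-t^*_\e+i/2^k)$, which is exactly the kernel appearing in \eqref{es2*}.

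I would construct the decomposition in two stages. First, split using the modulation cutoff $\eta_{\leq 2k-100}(P)$: the high-modulation remainder $\eta_{>2k-100}(P)f^\e$ is absorbed into $g$, since Lemma \ref{lp3}(2) places it in $X_k$ with the required bound. On the low-modulation piece the combined constraints $|P|\lesssim 2^{2k-100}$ and $|\partial_{\xi_1^\e}P|\sim 2^k$ together with the inverse function theorem force the nearest real zero of the quadratic in $\xi_1^\e$ to lie within distance $2^{k-101}$ of $\xi_1^\e$, so defining $t^*_\e$ as that zero yields item (1); the borderline case $\cos 2\theta=0$ is linear in $\xi_1^\e$ and automatic because $|\xi_2^\e|=|\xi\cdot\overline{\e}|\gtrsim 2^k$ there. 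Second, I would split the low-modulation piece using $\eta_{\leq k-100}(\xi_1^\e-t^*_\e)$: on its complement the Taylor expansion yields $|P|\geq 2|M^*_\e||\xi_1^\e-t^*_\e|-|\cos 2\theta|(\xi_1^\e-t^*_\e)^2\gtrsim 2^{2k-100}$, except near the other root of the quadratic, where a parallel Hilbert-kernel structure can again be absorbed into $g$; so this complement contributes only to $g$.

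On the close-to-root region I would replace $1/(P(\tau,\xi)+i)$ by $1/(2M^*_\e(\xi_1^\e-t^*_\e+i/2^k))$, which produces exactly the first term of \eqref{es2*}. Two errors remain: the quadratic correction $\cos 2\theta\,(\xi_1^\e-t^*_\e)^2$ in the denominator, contributing an error of order $(\xi_1^\e-t^*_\e)^2/|P+i|^2$, and the replacement of the shift $i/(2M^*_\e)$ by $i/2^k$, controlled using $M^*_\e\sim 2^k$. Both errors are smooth multipliers on the relevant support, and Lemma \ref{lp3} lets me fold them into $g$ with $X_k$ norm bounded by $\|f^\e\|_{Y_k^\e}$. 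The function $h$ in \eqref{es2*} is the partial Fourier transform of $h_\e$ in $(x_2^\e,t)$, so Plancherel gives $\|h\|_{L^1_{y_1^\e}L^2_{\xi_2^\e,\tau}}=\|h_\e\|_{L^{1,2}_\e}$, closing item (3). Item (2) follows from $M^*_\e-\xi\cdot\overline{\e}=(t^*_\e-\xi_1^\e)\cos 2\theta$, so $|M^*_\e-\xi\cdot\overline{\e}|\leq 2^{k-100}$ on the active support, combined with $|\xi\cdot\overline{\e}|\sim 2^k$. Item (4) comes from differentiating $P(\tau,t^*_\e\e+\xi_2^\e\e^\perp)=0$ in $\tau$: $1+2M^*_\e\partial_\tau t^*_\e=0$ yields $|\partial_\tau t^*_\e|=(2|M^*_\e|)^{-1}\geq C2^{-k}$.

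The main technical obstacle I expect is the quantitative $X_k$ control of the two error multipliers: each is pointwise small, but the $X_k$ norm sums $2^{j/2}$-weighted $L^2$ modulation bands, so one must sum the errors over $j$ and return to physical space via Plancherel, carefully matching the prefactor $2^{k/2}$ in front of the $L^{1,2}_\e$ norm. A secondary nuisance is the treatment of the ``other root'' of the quadratic when $|\cos 2\theta|$ is not tiny: the two roots can be far apart, and the region near the second root must be shown to contribute only to $g$ rather than to the Hilbert-kernel term; this is handled by choosing $t^*_\e$ as the root nearest to $\xi_1^\e$ and by the $\eta_{\leq k-100}(\xi_1^\e-t^*_\e)$ cutoff.
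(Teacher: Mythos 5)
Your proposal is correct and takes essentially the same approach as the paper: strip off the high-modulation piece into $X_k$ via Lemma \ref{lp3}, factor the quadratic $P(\tau,\cdot)$ in $\xi_1^\mathbf{e}$ around its relevant real root (your Taylor form $P=2M^*_\mathbf{e}(\xi_1^\mathbf{e}-t^*_\mathbf{e})+\cos 2\theta\,(\xi_1^\mathbf{e}-t^*_\mathbf{e})^2$ is the same identity as the paper's $P=(\xi_1^\mathbf{e}-t^*_\mathbf{e})K$), replace $1/(P+i)$ by the Hilbert-type kernel $1/(2M^*_\mathbf{e}(\xi_1^\mathbf{e}-t^*_\mathbf{e}+i/2^k))$, and fold the two multiplier errors back into $X_k$; items (2)--(4) are argued identically. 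The only cosmetic difference is your IFT-style argument for $t^*_\mathbf{e}\in\R$ versus the paper's direct computation with the imaginary part of \eqref{ret}, and your worry about the ``other root'' is automatically void on the support $S$ since $|\xi\cdot\overline{\mathbf{e}}|\gtrsim 2^k$ together with $|P|\leq 2^{2k-80}$ already force $|\xi_1^\mathbf{e}-t^*_\mathbf{e}|\ll 2^k$ as in \eqref{rex}.
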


\begin{proof}[Proof of Lemma \ref{lp4*}] Let
\begin{equation*}
h'(x,t)=2^{-k/2}\mathcal{F}^{-1}[(P(\tau,\xi)+i)\cdot
f^\mathbf{e}](x,t).
\end{equation*}
Thus
\begin{equation}\label{vc21}
\begin{cases}
&f^\mathbf{e}(\xi_{1}^\mathbf{e}\mathbf{e}+\xi_{2}^\mathbf{e}\mathbf{e}^{\perp},\tau)=\chi_{k,5}(\xi\cdot\overline{\mathbf{e}})\cdot
\frac{2^{k/2}}{P(\tau,\xi)+i}\mathcal{F}(h')(\xi_{1}^\mathbf{e}\mathbf{e}+\xi_{2}^\mathbf{e}\mathbf{e}^{\perp},\tau);\\
&\|h'\|_{L^{1,2}_ \e}= C||f^\mathbf{e}||_{Y^\mathbf{e}_k}.
\end{cases}
\end{equation}
Let
\begin{equation*}
h''(y^\mathbf{e}_1,\xi_{2}^\mathbf{e},\tau)=\int_{P_\e\times\R}h'(y_{1}^\mathbf{e}\e+y_{2}^\mathbf{e}\mathbf{e}^{\perp},t)
e^{-i(y_{2}^\mathbf{e}\xi_{2}^\mathbf{e}+t\tau)}\,dy_{2}^\mathbf{e}dt.
\end{equation*} By \eqref{vc21}, we have
\begin{equation}\label{vc211}
\begin{cases}
&f^\mathbf{e}(\xi_{1}^\mathbf{e}\mathbf{e}+\xi_{2}^\mathbf{e}\mathbf{e}^{\perp},\tau)=\chi_{k,5}(\xi\cdot\overline{\mathbf{e}})\cdot
\frac{2^{k/2}}{P(\tau,\xi)+i}\int_{\R}h''(y_1^\e,\xi_{2}^\mathbf{e},\tau)e^{-iy^\mathbf{e}_1\xi^\mathbf{e}_1}\,dy^\mathbf{e}_1;\\
&\|h'\|_{L^{1,2}_
\e}=\|h''\|_{L^{1}_{y_{1}^\mathbf{e}}L^{2}_{\xi_{2}^\mathbf{e},\tau}}.
\end{cases}
\end{equation}
In view of \eqref{lp3b},
\begin{equation*}
||\eta_{\geq 2k-101}(P(\tau,\xi))\cdot f^\mathbf{e}||_{X_k}\leq
C||f^\mathbf{e}||_{Y^\mathbf{e}_k}.
\end{equation*}
It remains to write $\eta_{\leq2k-100}(P(\tau,\xi))\cdot
f^\mathbf{e}$ as in \eqref{es2*}. From \eqref{vc211}, we obtain
\begin{equation}\label{vc22}
\begin{split}
&\eta_{\leq2k-100}(P(\tau,\xi))\cdot f^\mathbf{e}(\xi_{1}^\mathbf{e}\mathbf{e}+\xi_{2}^\mathbf{e}\mathbf{e}^{\perp},\tau)\\
&=2^{k/2}\cdot \chi_{k,5}(\xi\cdot\overline{\mathbf{e}})\cdot
\frac{\eta_{\leq2k-100}(P(\tau,\xi))}{P(\tau,\xi)+i}\int_{\R}h''(y_1^\e,\xi_{2}^\mathbf{e},\tau)e^{-iy^\mathbf{e}_1\xi^\mathbf{e}_1}\,dy^\mathbf{e}_1.
\end{split}
\end{equation}
Let
\begin{eqnarray}\label{s}
S=\{(\xi,\tau): |\xi|\leq 2^{k+2}, |\xi\cdot\overline{\e}|\geq
2^{k-6}, |P(\tau,\xi)|\leq 2^{2k-80}\}.
\end{eqnarray}
It is easy to see that the right-hand side of \eqref{vc22} is
supported in $S$.

Now assume $(\xi,\tau)\in S$, and analyze the behavior of
$P(\tau,\xi)=\tau+\xi_1^2-\xi_2^2$. In view of \eqref{es22}, if we
fix $\tau$, $\xi_2^\e$, then there exists a
$t^*_\e=t^*_\e(\xi_2^\e,\tau)$, so that
\begin{eqnarray}\label{ret}
0&=&\tau+(t^*_\mathbf{e})^2(\cos^2\theta-\sin^2\theta)
-4t^*_\mathbf{e}\xi_{2}^\mathbf{e}\cos\theta\sin\theta\nonumber\\
&&-(\xi_{2}^\mathbf{e})^2(\cos^2\theta-\sin^2\theta).
\end{eqnarray}
Thus we get
\begin{eqnarray}\label{ret1}
P(\tau,\xi)&=&\tau+(\xi_{1}^\mathbf{e})^2(\cos^2\theta-\sin^2\theta)
-4\xi_{1}^\mathbf{e}\xi_{2}^\mathbf{e}\cos\theta\sin\theta\nonumber\\
&&-(\xi_{2}^\mathbf{e})^2(\cos^2\theta-\sin^2\theta)\nonumber\\
&=&(\xi_{1}^\mathbf{e}-t^*_\mathbf{e})[(\xi_{1}^\mathbf{e}+t^*_\mathbf{e})(\cos^2\theta-\sin^2\theta)
-4\xi_{2}^\mathbf{e}\cos\theta\sin\theta].
\end{eqnarray}
Denote
\begin{eqnarray}\label{k}
K(\tau,\xi)=(\xi_{1}^\mathbf{e}+t^*_\mathbf{e})(\cos^2\theta-\sin^2\theta)
-4\xi_{2}^\mathbf{e}\cos\theta\sin\theta,
\end{eqnarray}
we reduce
\begin{eqnarray}\label{rep}
 P(\tau,\xi)=(\xi_{1}^\mathbf{e}-t^*_\mathbf{e})K(\tau,\xi).
\end{eqnarray}

Now we show that $|K(\tau,\xi)|\geq 2^{k-10}$ for $(\xi,\tau)\in S$,
for proper $t^*_\mathbf{e}$. Notice that
\begin{eqnarray}\label{rep1}
\xi\cdot\overline{\mathbf{e}}=(\xi_{1}^\mathbf{e}\mathbf{e}+\xi_{2}^\mathbf{e}\mathbf{e}^{\perp})
\cdot\overline{\mathbf{e}}=\xi_{1}^\mathbf{e}(\cos^2\theta-\sin^2\theta)
-2\xi_{2}^\mathbf{e}\cos\theta\sin\theta
\end{eqnarray}
we get
\begin{eqnarray}\label{repkp}
K(\tau,\xi)=-(\xi_{1}^\mathbf{e}-t^*_\mathbf{e})(\cos^2\theta-\sin^2\theta)+2\xi\cdot\overline{\mathbf{e}}.
\end{eqnarray}
If $\cos^2\theta-\sin^2\theta=0$, then
$P(\tau,\xi)=\tau\pm2\xi^\e_1\xi_2^\e$, this case is easy. If
$\cos^2\theta-\sin^2\theta\neq0$, we can assume that at least one
solution satisfies
\begin{eqnarray}\label{kk}
|K(\tau,\xi)|=|(\xi_{1}^\mathbf{e}-t^*_\mathbf{e})(\cos^2\theta-\sin^2\theta)-2\xi\cdot\overline{\mathbf{e}}|>2^{k-20}\quad\text{
for }(\xi,\tau)\in S.
\end{eqnarray}
Otherwise, we denote $t_1,t_2$ to be the two solutions of
\eqref{ret}. Since \eqref{repkp} and $|\xi\cdot \overline{\e}|
\geq2^{k-6}$ in $S$, we have
\begin{eqnarray}\label{c11}
|(\xi_{1}^\mathbf{e}-t_i)(\cos^2\theta-\sin^2\theta)|\geq
2^{k-10}\quad\text{ for } i=1,2, \text{ and }(\xi,\tau)\in S,
\end{eqnarray}
and by the assumptation on $t_i$, we have
\begin{eqnarray}\label{c12}
P(\tau,\xi)=(\cos^2\theta-\sin^2\theta)(\xi_{1}^\mathbf{e}-t_1)(\xi_{1}^\mathbf{e}-t_2).
\end{eqnarray}
Combining \eqref{c11} and \eqref{c12}, we conclude that
$|P(\tau,\xi)|\geq2^{2k-20}$, which contradict with $(\xi,\tau)\in
S$. Then we select $t_\e^*$ to be the solution satisfies that
$|K(\tau,\xi)|>2^{k-20}$ for $(\xi,\tau)\in S$.
Furthermore, we have
\begin{eqnarray}\label{rex}
|\xi_{1}^\mathbf{e}-t^*_\mathbf{e}(\xi_2^\e,\tau)|\leq 2^{k-60}
\text{ for
}(\xi_{1}^\mathbf{e}\mathbf{e}+\xi_{2}^\mathbf{e}\mathbf{e}^{\perp},\tau)\in
S.
\end{eqnarray}

Now we show that $t^*_\mathbf{e}\in \R$. Let
$\mathfrak{Re}(t^*_\mathbf{e})$ and $\mathfrak{Im}(t^*_\mathbf{e})$
denote the real and imaginary part of $t^*_\mathbf{e}$, we notice
that
\begin{eqnarray}\label{m}
\mathfrak{Re}M^*_\e(\xi_{2}^\mathbf{e},\tau)=\mathfrak{Re}(t_\e^{*})(\cos^2\theta-\sin^2\theta)
-2\xi_{2}^\mathbf{e}\cos\theta\sin\theta.
\end{eqnarray}
 In view of
\eqref{rep1}, we have
\begin{eqnarray}\label{rem}
&&\xi\cdot\overline{\mathbf{e}}=(\xi_{1}^\mathbf{e}-\mathfrak{Re}(t_\e^{*}))
(\cos^2\theta-\sin^2\theta)+\mathfrak{Re}M_\mathbf{e}^{*}(\xi_{2}^\mathbf{e},\tau)\nonumber\\
&&\text{where }|\xi_{1}^\mathbf{e}-\mathfrak{Re}(t_\e^{*})|
\leq|\xi_{1}^\mathbf{e}-t^*_\mathbf{e}|\leq 2^{k-80} \text{ by
\eqref{rex}}.
\end{eqnarray}
From \eqref{rem}, we get that $|\mathfrak{Re}M_\mathbf{e}^{*}|\geq
2^{k-10}$ in $S$. In view of \eqref{ret}, we have
$$
0=2i\mathfrak{Re}(t_\e^{*})\mathfrak{Im}(t_\e^{*})(\cos^2\theta-\sin^2\theta)
-4i\mathfrak{Im}(t_\e^{*})\xi_2^\e\cos\theta\sin\theta=2i\mathfrak{Re}(M_\e^*)\mathfrak{Im}(t_\e^{*})
$$
which implies $\mathfrak{Im}(t_\e^{*})=0$, thus $t_\e^*\in \R$.

Then we can rewrite \eqref{rem} as
\begin{eqnarray}\label{rem1}
&&\xi\cdot\overline{\mathbf{e}}=(\xi_{1}^\mathbf{e}-t_\e^{*})
(\cos^2\theta-\sin^2\theta)+M_\mathbf{e}^{*}(\xi_{2}^\mathbf{e},\tau)\nonumber\\
&&\text{where }|\xi_{1}^\mathbf{e}-t^*_\mathbf{e}|\leq 2^{k-80}
\text{ by \eqref{rex}}.
\end{eqnarray}
In view of \eqref{k} and \eqref{mm}, we have
\begin{eqnarray}\label{rek}
&&K(\tau,\xi)=(\xi_{1}^\mathbf{e}-t_\e^{*})(\cos^2\theta-
\sin^2\theta)+2M^*_\e(\xi_{2}^\mathbf{e},\tau)\nonumber\\
&&\text{where }|\xi_{1}^\mathbf{e}-t_\e^{*}| \leq 2^{k-80} \text{ by
\eqref{rex}}.
\end{eqnarray}

Now we begin to prove the representation formula. In view of
\eqref{rep}, we have
\begin{eqnarray*}
&&\chi_{k,5}(\xi\cdot\overline{\mathbf{e}})\cdot
\frac{\eta_{\leq2k-100}(P(\tau,\xi))}{P(\tau,\xi)+i}\nonumber\\
&=&\chi_{k,5}(\xi\cdot\overline{\mathbf{e}})\cdot
\frac{\eta_{\leq2k-100}((\xi_{1}^\mathbf{e}-t^*_\mathbf{e})K(\tau,\xi))}{(\xi_{1}^\mathbf{e}-t^*_\mathbf{e})K(\tau,\xi)+i}.
\end{eqnarray*}
Combining \eqref{rek} and \eqref{rem1}, we get
\begin{equation}\label{vc23}
\begin{split}
&\chi_{k,5}(\xi\cdot\overline{\mathbf{e}})\cdot
\frac{\eta_{\leq2k-100}((\xi_{1}^\mathbf{e}-t^*_\mathbf{e})K(\tau,\xi))}{(\xi_{1}^\mathbf{e}-t^*_\mathbf{e})K(\tau,\xi)+i}\\
=&\chi_{k,5}(M_\mathbf{e}^{*})\cdot \frac{\eta_{\leq
k-100}(\xi_{1}^\mathbf{e}-t_\mathbf{e}^{*})}{(\xi_{1}^\mathbf{e}-t^*_\mathbf{e})
+i/2^k}\cdot\frac{1}{2M_\mathbf{e}^{*}}+E(\xi_1^\e,\xi_2^\e,\tau),
\end{split}
\end{equation}
where $E(\xi_1^\e,\xi_2^\e,\tau)$ is the error term with the
estimate
\begin{eqnarray}\label{e}
|E(\xi_1^\e,\xi_2^\e,\tau)|&\leq&
C\chi_{k,10}(M_\mathbf{e}^{*})\cdot\eta_{\leq
k-90}(\xi_{1}^\mathbf{e}-t_\e^{*})\nonumber\\&&\times
\Big[\frac{1}{2^{2k}}+(1+|P(\tau,\xi)|)^{-2}\Big].
\end{eqnarray}

We substitute \eqref{vc23} into \eqref{vc22} and notice that the
error term corresponding to $E(\xi_1^\e,\xi_2^\e,\tau)$ can be
controlled in $X_k$ (as  in Lemma \ref{lp3}). The  main term in the
right-hand side of \eqref{vc23} leads to the representation
\eqref{es2*}, with $h= h''$.

For \eqref{pp4}, differentiate the equation \eqref{ret} in $\tau$
axis, we have
\begin{eqnarray}\label{ran1}
1&=&2(\partial_{\tau}t_\mathbf{e}^*)[-t_\mathbf{e}^*(\cos^2\theta-\sin^2\theta)+2\xi_{2}^\mathbf{e}\cos\theta\sin\theta]\nonumber\\
&=&-2(\partial_{\tau}t_\mathbf{e}^*)M^*_\mathbf{e}
\end{eqnarray}
where $M^*_\e\sim 2^{k}$, thus we conclude \eqref{pp4}.
\end{proof}

The following local-smoothing estimates is the main lemma in this
paper, the corresponding lemma in Schrodinger case is Lemma 3.2 in
\cite{IK}.

\begin{lemma}[Local-smoothing estimate]\label{lp4}
If $k \in \mathbb{Z}_{+}$, $\mathbf{e}'\in \mathbb{S}^{1}$ and $f\in
Z_{k}$ then
\begin{eqnarray}\label{es2}
\|\mathcal{F}_{(2+1)}^{-1}[f\cdot \chi_{k,30}(\xi\cdot
\overline{\mathbf{e}'})]\|_{L^{\infty, 2}_{\mathbf{e}'}}\leq
C2^{-k/2}\|f\|_{Z_{k}}
\end{eqnarray}

\end{lemma}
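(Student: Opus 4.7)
The plan is to combine Lemma~\ref{lp1} (decomposition of $Z_k$) with Lemma~\ref{lp4*} (representation of $Y_k^\e$ functions) and estimate each summand of $f$ separately. By Lemma~\ref{lp1} I write $f=\sum_j g_j+\sum_l f_{\e_l}$ with $\text{supp}(g_j)\subset D_{k,j}$, $f_{\e_l}\in Y_k^{\e_l}$, and $\sum_j 2^{j/2}\|g_j\|_{L^2}+\sum_l\|f_{\e_l}\|_{Y_k^{\e_l}}\lesssim \|f\|_{Z_k}$, so by linearity it suffices to bound each piece.

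For each $g_j$, at fixed $x_1^{\e'}$ I apply Plancherel in $(x_2^{\e'},t)\leftrightarrow(\xi_2^{\e'},\tau)$ to reduce the $L^2_{x_2^{\e'},t}$-norm of the inverse Fourier transform to $\|\int e^{ix_1^{\e'}\xi_1^{\e'}}g_j\,\chi_{k,30}(\xi\cdot\overline{\e'})\,d\xi_1^{\e'}\|_{L^2_{\xi_2^{\e'},\tau}}$. On the support of $\chi_{k,30}(\xi\cdot\overline{\e'})$ one has $|\xi\cdot\overline{\e'}|\gtrsim 2^{k-30}$, and by Lemma~\ref{lp2} the derivative $|\partial_{\xi_1^{\e'}}P|=2|\xi\cdot\overline{\e'}|\gtrsim 2^k$. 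Applying the measure estimate \eqref{*} in the $\e'$-direction, the set $\{\xi_1^{\e'}:(\xi,\tau)\in D_{k,j},\,|\xi\cdot\overline{\e'}|\gtrsim 2^{k-30}\}$ has measure $\lesssim \min(2^{j-k},2^k)$ for each fixed $(\xi_2^{\e'},\tau)$. Cauchy--Schwarz in $\xi_1^{\e'}$ yields $\|\mathcal{F}^{-1}[g_j\chi_{k,30}]\|_{L^{\infty,2}_{\e'}}\lesssim \min(2^{(j-k)/2},2^{k/2})\|g_j\|_{L^2}\leq 2^{-k/2}2^{j/2}\|g_j\|_{L^2}$, and summation in $j$ produces the bound $\lesssim 2^{-k/2}\|f\|_{X_k}$.

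For each $f^\e\in Y_k^\e$, apply Lemma~\ref{lp4*} to write $f^\e=\mathrm{main}+g$ with $\|g\|_{X_k}\leq C\|f^\e\|_{Y_k^\e}$; the $g$ contribution is handled by the previous step. For the main term, substitute $\tilde h=\mathcal{F}_{y_1^\e}(h)$ into \eqref{es2*}, interchange orders of integration, and reduce to uniformly bounding the inner $\xi_1^\e$-kernel
\[
I(z,\xi_2^\e,\tau)=\int e^{iz\xi_1^\e}\chi_{k,30}(\xi\cdot\overline{\e'})\,\frac{\eta_{\leq k-100}(\xi_1^\e-t^*_\e)}{\xi_1^\e-t^*_\e+i/2^k}\,d\xi_1^\e.
\]
The bound $|I|\leq C$ uniform in its arguments and in $k$ rests on two facts: the truncated Cauchy kernel $\eta_{\leq k-100}(u)/(u+i/2^k)$ has inverse Fourier transform bounded uniformly in $k$ (its imaginary part is a Poisson-type kernel of $L^1$-mass $O(1)$, its real part a truncated Hilbert kernel treated by splitting into real/imaginary components of $1/(\xi+i/2^k)$), and the cutoff $\chi_{k,30}(\xi\cdot\overline{\e'})$ varies by only $O(1)$ in $\xi_1^\e$ across the width $2^{k-99}$ of $\text{supp}\,\eta_{\leq k-100}(\cdot-t^*_\e)$, which is much smaller than its own scale $2^{k-30}$. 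Combining $|I|\leq C$ with $|2^{k/2}\chi_{k,5}(M^*_\e)/(2M^*_\e)|\lesssim 2^{-k/2}$, Plancherel in $(x_2^\e,t)\leftrightarrow(\xi_2^\e,\tau)$ together with Minkowski in $y_1^\e$ yields the desired estimate when $\e=\pm\e'$, since $L^{\infty,2}_\e=L^{\infty,2}_{\e'}$ in that case.

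The principal obstacle is the case $\e\neq\pm\e'$, where the preceding Plancherel step naturally produces the wrong-direction norm $L^{\infty,2}_\e$ instead of $L^{\infty,2}_{\e'}$. To handle this, I use that on the support of $\mathrm{main}\cdot\chi_{k,30}(\xi\cdot\overline{\e'})$ both $|\xi\cdot\overline{\e}|\sim 2^k$ and $|\xi\cdot\overline{\e'}|\gtrsim 2^k$ hold simultaneously, so the factorization of Lemma~\ref{lp4*} is available in the $\e'$-direction as well: there exist $t^*_{\e'}(\xi_2^{\e'},\tau)\in\R$ and $K'(\tau,\xi)$ with $|K'|\sim 2^k$ such that $P=(\xi_1^{\e'}-t^*_{\e'})K'$ on the relevant region. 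The identity $\xi_1^\e-t^*_\e=(K'/K)(\xi_1^{\e'}-t^*_{\e'})$ with $|K/K'|\sim 1$ converts the $\xi_1^\e$-Cauchy kernel into a $\xi_1^{\e'}$-Cauchy kernel up to a bounded smooth multiplier; the argument then runs as in the preceding paragraph but with Plancherel in $(x_2^{\e'},t)\leftrightarrow(\xi_2^{\e'},\tau)$ and Minkowski in $y_1^{\e'}$. The technically most delicate point is to keep careful track of the bounded multipliers $K/K'$ and to reorganize $h$'s $L^{1,2}_\e$ mass under the change-of-frame so that the final estimate lands in $L^{\infty,2}_{\e'}$ with constant $C 2^{-k/2}$.
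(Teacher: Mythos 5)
Your handling of the $X_k$ piece is correct and equivalent to the paper's (Plancherel in the transverse variables plus the one-dimensional measure estimate that follows from Lemma~\ref{lp2}). The gap is in the $Y_k^{\e}$ piece, specifically in the case $\e\neq\pm\e'$, which is exactly the crux of the lemma. After you interchange integration and bound the inner Cauchy-type kernel $I(z,\xi_2^\e,\tau)$, your concluding step is ``Plancherel in $(x_2^{\e'},t)\leftrightarrow(\xi_2^{\e'},\tau)$ together with Minkowski in $y_1^{\e'}$.'' But the function $h$ produced by Lemma~\ref{lp4*} is measured in $L^1_{y_1^{\e}}L^2_{\xi_2^{\e},\tau}$, and a rotation from $\e$ to $\e'$ does not carry $L^{1,2}_\e$ into $L^{1,2}_{\e'}$ (mixed $L^{p,q}$ norms are not rotation invariant). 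Rewriting the Cauchy kernel as a $\xi_1^{\e'}$-kernel via $\xi_1^\e-t^*_\e=(K'/K)(\xi_1^{\e'}-t^*_{\e'})$ does not fix this: the residual factor $\int e^{-iy_1^\e\xi_1^\e}h(y_1^\e,\xi_2^\e,\tau)\,dy_1^\e$ still depends on $\xi_1^{\e'}$ through \emph{both} $\xi_1^\e$ and $\xi_2^\e$, so after you integrate out $\xi_1^{\e'}$ you are not left with a function of $(\xi_2^{\e'},\tau)$ to which Plancherel can be applied, and there is no $y_1^{\e'}$-variable to Minkowski over. Your last sentence concedes this is ``the technically most delicate point'' but offers no mechanism, and I do not see one along these lines.

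The paper's proof handles exactly this mismatch with a different idea. It first replaces $\chi_{k,30}(\xi\cdot\overline{\e'})$ by its value $\chi_{k,30}\big((t^*_\e\e+\xi_2^\e\e^\perp)\cdot\overline{\e'}\big)$ at the characteristic point $\xi_1^\e=t^*_\e$; the error is $O(2^{-k}|\xi_1^\e-t^*_\e|)$ and hence controlled in $X_k$, which is already done. Once the $\e'$-cutoff no longer depends on $\xi_1^\e$, integrating the $\xi_1^\e$-Cauchy kernel against $\widehat{h}$ collapses everything to a function $h'(\xi_2^\e,\tau)$ multiplied by $e^{ix_1^\e t^*_\e(\xi_2^\e,\tau)}$ -- i.e., a function supported on the characteristic surface $P=0$. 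The substitution $\tau=Q(\mu,\xi_2^\e)$ (so $t^*_\e=\mu$) identifies this with a free solution $W(t)\phi$ with $\widehat{\phi}$ supported in an annulus and with the cutoff $\chi_{k,30}(\xi\cdot\overline{\e'})$ in place, and the bound is then the direction-agnostic estimate \eqref{vy10} applied in the $\e'$-direction. You would need to incorporate some version of this freeze-the-cutoff-and-pass-to-the-characteristic-surface step; without it, the $\e\neq\pm\e'$ case does not close.
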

\begin{proof}[\textbf{Proof of Lemma \ref{lp4}}]
In view of the space decomposition \eqref{lp11}. Assume first that
$f=g_j\in X_k$. In view of the definitions, it suffices to prove
that if $j\geq 0$ and $g_j$ is supported in $D_{k,j}$, then
\begin{equation}\label{gu41}
\Big|\Big|\int_{\mathbb{R}^{2+1}}e^{ix\cdot\xi}e^{it\tau}
g_j(\xi,\tau)\cdot\chi_{k,30}(\xi\cdot\overline{\mathbf{e}'}) \,d\xi
d\tau\Big|\Big|_{L^{\infty,2}_{\e'}}\leq
C2^{-k/2}2^{j/2}\|g_j\|_{L^2}.
\end{equation}
Let $g_j^\#(\xi,\tau)=g_j(\xi,\tau-\xi_1^2+\xi_2^2)$. By
H\"{o}lder's ine\-qua\-lity, for \eqref{gu41} it reduce to show
\begin{equation}\label{vy10}
\Big|\Big|\int_{\mathbb{R}^{2}}e^{ix\cdot\xi}e^{-it(\xi_1^2-\xi_2^2)}h(\xi)
\cdot\chi_{k,30}(\xi\cdot\overline{\mathbf{e}'})\,d\xi
\Big|\Big|_{L^{\infty,2}_{\e'}}\leq C2^{-k/2}\|h\|_{L^2}.
\end{equation}
which follows easily from Plancherel theorem and a change of
variables(let $\nu=\xi_1^2-\xi_2^2$, then
$\chi_{k,30}(\xi\cdot\overline{\mathbf{e}'})d\xi=2^{-k}\chi_{k,30}(\xi\cdot\overline{\mathbf{e}'})d\nu
d\xi_{2}^{\e'}$ in view of \eqref{lp22}).

Assume now that $f=f_{\mathbf{e}}\in Y_{k}^{\mathbf{e}}$,
 $k\geq 100$. We adopt the notations in Lemma \eqref{lp4*}.
The estimates in Lemma \ref{lp4*} show that(since
$|\chi_{k,30}(\xi\cdot\overline{\mathbf{e}'})-
\chi_{k,30}((t_\e^*\e+\xi_2^\e\e^\perp)\cdot\overline{\mathbf{e}'})|\leq
C2^{-k}|\xi_1^\e-t_\e^*|$),
\begin{equation*}
||f^{\mathbf{e}}\cdot
[\chi_{k,30}(\xi\cdot\overline{\mathbf{e}'})-\chi_{k,30}((t_\e^*\e+\xi_2^\e\e^\perp)\cdot\overline{\mathbf{e}'})]||_{X_k}\leq
C||f^{\mathbf{e}}||_{Y_k^{\mathbf{e}}}.
\end{equation*}
Since \eqref{gu41} was already proved for $f\in X_k$, it suffices to
show that
\begin{equation}\label{gu47}
\begin{split}
\Big|\Big|&\int_{\R^2\times\mathbb{R}}e^{ix_1^\e\xi_1^\e}e^{ix_2^\e\xi_2^\e}e^{it\tau}
f^{\mathbf{e}}(\xi_1^\e\e+\xi_2^\e\e^\perp,\tau)\chi_{k,5}(\xi\cdot\overline{\e})\\
&\times\chi_{k,30}((t_\e^*\e+\xi_2^\e\e^\perp)\cdot\overline{\e'})\,d\xi_1^\e
d\xi_2^\e d\tau\Big|\Big|_{L^{\infty,2}_{\e'}}\leq
C2^{-k/2}\|f^{\mathbf{e}}\|_{Y_k^{\mathbf{e}}}.
\end{split}
\end{equation}
By substituding \eqref{es2*} to \eqref{gu47} and then integrating
the left-hand side of \eqref{gu47} according to the variable
$\xi_1^\e$, we can reduce \eqref{gu47} to show
\begin{equation}\label{gu48}
\begin{split}
\Big|\Big|\int_{\R\times\mathbb{R}}&e^{ix_1^\e
t_\e^*}e^{ix_2^\e\xi_2^\e}e^{it\tau}
\cdot 2^{-k/2}h'(\xi_2^\e,\tau)\chi_{k,10}((t_\e^*\e+\xi_2^\e\e^\perp)\cdot\overline{\e})\\
&\times\chi_{k,30}((t_\e^*\e+\xi_2^\e\e^\perp)\cdot\overline{\e'})\,d\xi_2^\e
d\tau\Big|\Big|_{L^{\infty,2}_{\e'}}\leq
C2^{-k/2}\|h'\|_{L^2_{\xi_2^\e,\tau}},
\end{split}
\end{equation}
for any $h'\in L^2(\R\times\R)$. As before,
$\e=(\cos\theta,\sin\theta)$, we use the substitutions
$\tau=-(\mu\cos\theta+\xi_{2}^\e\sin\theta)^{2}+(\mu\sin\theta-\xi_{2}^\e\cos\theta)^{2}\triangleq
Q(\mu,\xi_2^\e)$ (so $t_\e^*=\mu$), and
$$h''(\xi,\mu)=2^{-k/2}\cdot\partial_{\mu}(Q(\mu,\xi_2^\e))\cdot
\chi_{k,5}((\mu\e+\xi_2^\e\e^\perp)\cdot\overline{\e})\cdot
h'(\xi_2^\e,Q(\mu,\xi_2^\e)).$$ We notice that
$\partial_{\mu}(Q(\mu,\xi_2^\e))=
(\mu\e+\xi_2^\e\e^\perp)\cdot\overline{\mathbf{e}}\approx 2^k$, so
$||h''||_{L^2}\leq C||h'||_{L^2}$. For \eqref{gu48} it suffices to
prove that
\begin{equation*}
\begin{split}
\Big|\Big|\int_{\R\times\mathbb{R}}&e^{ix_1^\e\mu}e^{ix_2^\e\xi_2^\e}e^{-itQ(\mu,\xi_2^\e)}\cdot
 h''(\xi_2^\e,\mu)\\
&\times\chi_{k,30}((\mu\e+\xi_2^\e\e^\perp)\cdot\overline{\e'})\,d\xi_2^\e
d\mu\Big|\Big|_{L^{\infty,2}_{\e'}}\leq
C2^{-k/2}\|h''\|_{L^2_{\xi_2^\e,\mu}},
\end{split}
\end{equation*}
which follows from \eqref{vy10}.
\end{proof}

\begin{remark}\label{re1}
If we remove the cut-off function $\chi_{k,30}(\xi\cdot
\overline{\mathbf{e}'})$, the estimate
\begin{eqnarray}\label{es22}
\|\mathcal{F}_{(2+1)}^{-1}(f)\|_{L^{\infty, 2}_{\mathbf{e}'}}\leq
C2^{-k/2}\|f\|_{Z_{k}}
\end{eqnarray}
is not ture. Furthermore, the following inequality is false
\begin{eqnarray}\label{1}
\|\mathcal{F}^{-1}_{(2+1)}(f)\|_{L^{\infty,2}_{\e}}\leq
C(k+1)\|f\|_{Z_{k}}
\end{eqnarray}
To see this, let $\e=(\cos (\pi/4),\sin(\pi/4))$, then
$P(\tau,\xi)=\tau+\xi_{1}^{\e}\xi_{2}^{\e}$, define
$f(\tau,\xi_{1}^{\e}\e+\xi_{2}^{\e}\e^{\perp})=\mathbf{1}_{Q}(\tau,\xi_{1}^{\e}\e+\xi_{2}^{\e}\e^{\perp})$,
where
$$
Q=\{N\leq|\xi_{1}^{\e}|\leq 2N; |\xi_{2}^{\e}|\leq 1/(4N);
|\tau|\leq 1/2\}.
$$
\end{remark}

Next we give a maximal function estimate.
\begin{lemma}[Maximal Function Estimate]\label{lp5}
If $k \in \mathbb{Z}_{+}$, $\mathbf{e}\in \mathbb{S}^{1}$ and $f\in
Z_{k}$ then
\begin{eqnarray}\label{es5}
\|1_{[-2,2]}(t)\cdot\mathcal{F}_{(2+1)}^{-1}(f)\|_{L^{2, \infty
}_{\mathbf{e}}}\leq C2^{k/2}(k+1)^{2}\|f\|_{Z_{k}}
\end{eqnarray}
\end{lemma}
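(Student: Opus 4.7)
I would start with the decomposition from Lemma \ref{lp1} to write $f=\sum_{j\in\Z_+}g_j+\sum_{l=1}^L f_{\mathbf{e}_l}$ with $\supp g_j\subset D_{k,j}$ and $f_{\mathbf{e}_l}\in Y_k^{\mathbf{e}_l}$, and estimate each kind of piece separately. The heart of the proof is the free-group maximal estimate
\begin{equation*}
\|e^{it\square}P_k u_0\|_{L^{2}_{x_1^\mathbf{e}}L^{\infty}_{x_2^\mathbf{e},t}}\leq C\,2^{k/2}\|u_0\|_{L^2},\qquad \mathbf{e}\in\mathbb{S}^1,
\end{equation*}
which follows at once by Plancherel in $x_1^\mathbf{e}$: for each frozen $\xi_1^\mathbf{e}$ the inner object is $\int e^{ix_2^\mathbf{e}\xi_2^\mathbf{e}+it(\xi_1^2-\xi_2^2)}\hat u_0\,d\xi_2^\mathbf{e}$, which is majorised pointwise in $(x_2^\mathbf{e},t)$ by $\int|\hat u_0|\,d\xi_2^\mathbf{e}$; Cauchy--Schwarz against the frequency support $|\xi_2^\mathbf{e}|\lesssim 2^k$ then produces the $2^{k/2}$ factor. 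Dispersion plays no role here, so the bound is automatically uniform in $\mathbf{e}$.

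The $X_k$-type contributions are then handled by the standard $X^{s,b}$-transfer principle. For $g_j$ supported in $D_{k,j}$ I set $g_j^\#(\xi,\tau):=g_j(\xi,\tau-\xi_1^2+\xi_2^2)$, supported in $|\tau|\leq 2^{j+1}$, and write
\begin{equation*}
\mathcal{F}_{(2+1)}^{-1}(g_j)(x,t)=\int_{\R} e^{it\tau}\bigl[e^{it\square}\mathcal{F}_\xi^{-1}(g_j^\#(\cdot,\tau))\bigr](x)\,d\tau.
\end{equation*}
Minkowski in $\tau$, the free maximal estimate above, and Cauchy--Schwarz in $\tau$ (using $|\tau|\leq 2^{j+1}$) then give $\|\mathcal{F}_{(2+1)}^{-1}(g_j)\|_{L^{2,\infty}_\mathbf{e}}\leq C\,2^{k/2}2^{j/2}\|g_j\|_{L^2}$, and summing over $j$ bounds $\|\sum_j\mathcal{F}^{-1}(g_j)\|_{L^{2,\infty}_\mathbf{e}}$ by $C\,2^{k/2}\|f\|_{Z_k}$ via Lemma \ref{lp1}. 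For each $f_{\mathbf{e}_l}\in Y_k^{\mathbf{e}_l}$, the embedding $\|\cdot\|_{X_k}\leq C(k+1)\|\cdot\|_{Y_k^{\mathbf{e}_l}}$ from Lemma \ref{lp3}(b) reduces the matter to the $X_k$-bound at cost of a $(k+1)$-factor; summing over the finite set $\{\mathbf{e}_l\}$ yields $\|\mathcal{F}^{-1}(\sum_l f_{\mathbf{e}_l})\|_{L^{2,\infty}_\mathbf{e}}\leq C\,2^{k/2}(k+1)\|f\|_{Z_k}$, which fits comfortably inside the claimed $(k+1)^2$ loss.

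The main obstacle is conceptual rather than computational: the non-ellipticity of $\square$ breaks the rotational symmetry used in \cite{IK}, and one must check that the free-group estimate really is uniform in $\mathbf{e}\in\mathbb{S}^1$. This includes the degenerate direction $\theta=\pi/4$, where $\cos(2\theta)=0$ and the operator in rotated coordinates degenerates to $-\partial_{x_1^\mathbf{e}}\partial_{x_2^\mathbf{e}}$ rather than $\pm(\partial_{x_1^\mathbf{e}}^2-\partial_{x_2^\mathbf{e}}^2)$. The approach above is insensitive to this degeneracy precisely because it relies only on the phase-insensitive bound $|\int e^{i\varphi}h|\leq\int|h|$ together with the frequency support $|\xi|\sim 2^k$; the uniformity in $\mathbf{e}$ is therefore automatic. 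A secondary nuisance is the careful accounting of logarithmic losses from Lemma \ref{lp3}(b), which the factor $(k+1)^2$ in \eqref{es5} is designed to absorb.
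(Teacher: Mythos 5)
The overall reduction scheme is fine and mirrors the paper: decompose $f$ via Lemma \ref{lp1}, reduce each $g_j$ piece to the free-group estimate, and handle the $Y_k^{\e_l}$ pieces by the $(k+1)$-loss embedding $Z_k\hookrightarrow X_k$ from Lemma \ref{lp3}. The problem lies at the heart of the argument: the proof you give for the free-group maximal estimate does not establish what you claim.

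Your argument runs Plancherel in the $x_1^\e$ variable: for each fixed $(x_2^\e,t)$ you bound the partial Fourier transform $\widehat F(\xi_1^\e;x_2^\e,t)$ pointwise by $G(\xi_1^\e):=\int|\hat u_0(\xi_1^\e,\xi_2^\e)|\,d\xi_2^\e$, then use Cauchy--Schwarz in $\xi_2^\e$. But Plancherel applied for fixed $(x_2^\e,t)$ yields
\begin{equation*}
\sup_{x_2^\e,t}\,\|F(\cdot,x_2^\e,t)\|_{L^2_{x_1^\e}}=\sup_{x_2^\e,t}\,\|\widehat F(\cdot;x_2^\e,t)\|_{L^2_{\xi_1^\e}}\leq \|G\|_{L^2_{\xi_1^\e}},
\end{equation*}
i.e.\ an $L^{\infty}_{x_2^\e,t}L^2_{x_1^\e}$ bound, whereas the statement to prove is an $L^2_{x_1^\e}L^{\infty}_{x_2^\e,t}$ bound — the norms are in the \emph{opposite} order, and by Minkowski the one you get is the weaker of the two. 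Nothing in your argument lets you move the supremum in $(x_2^\e,t)$ inside the $x_1^\e$-integral; the time-dependent modulation $e^{-itQ(\xi^\e)}$ translates the profile in $x_1^\e$ as $t$ varies, so a pointwise frequency bound independent of $(x_2^\e,t)$ does not control the maximal function.

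Your claim that "dispersion plays no role" is the warning sign: $L^2_{x_1}L^\infty_{x_2,t}$ is a genuine maximal/dispersive estimate, and the paper proves it by a $TT^*$ argument using the kernel bound $|K(x,t)|\lesssim\min(2^{2k},|t|^{-1})$ together with integration by parts for $|x_1|>2^{k+10}|t|$. That dispersive profile is precisely what produces both the time cutoff $1_{[-2,2]}(t)$ (which you silently dropped from the statement you are proving) and the logarithmic loss: integrating $2^k|x_1|^{-1}$ over $2^{-k}\leq|x_1|\leq 2^k$ gives the extra $(k+1)$ in \eqref{es51}, which your proposal does not reproduce. As a result your bookkeeping also comes out as $2^{k/2}(k+1)$ total rather than the claimed $2^{k/2}(k+1)^2$: the paper's $(k+1)^2$ is the product of one $(k+1)$ from the embedding $Z_k\hookrightarrow X_k$ and one $(k+1)$ from the $TT^*$ kernel estimate, and the second factor is not optional.
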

\begin{proof}[\textbf{Proof of Lemma \ref{lp5}}]
In view of Lemma \ref{lp3},  it suffices to prove that
\begin{eqnarray}\label{es51}
\|1_{[-2,2]}(t)\cdot\mathcal{F}_{(2+1)}^{-1}(g_{j})\|_{L^{2, \infty
}_{\mathbf{e}}}\leq C2^{k/2}(k+1)2^{j/2}\|g_{j}\|_{L^{2}},
\end{eqnarray}
for any function $g_j$ supported in $D_{k,j}$. Denote
$g_{j}^{\#}(\xi,\tau)=g_{j}(\xi,\tau-\xi_{1}^{2}+\xi_{2}^{2})$, the
left-hand side of \eqref{es51} is dominated by
\begin{eqnarray*}
\int_{[-2^{j+1},2^{j+1}]}\Big\|1_{[-2,2]}(t)\cdot\int_{\mathbb{R}^{2}}
g^{\#}_{j}(\xi,\mu)e^{ix\cdot\xi}e^{-it(\xi_{1}^{2}-\xi_{2}^{2})}d\xi\Big\|_{L^{2,
\infty }_{\mathbf{e}}}d\mu.
\end{eqnarray*}
Thus for \eqref{es51} it suffices to prove that
\begin{eqnarray}\label{es52}
\Big\|1_{[-2,2]}(t)\cdot\int_{\mathbb{R}^{2}}
h(\xi)e^{ix\cdot\xi}e^{-it(\xi_{1}^{2}-\xi_{2}^{2})}d\xi\Big\|_{L^{2,
\infty }_{\mathbf{e}}}\leq C2^{k/2}(k+1)\|h\|_{L_{\xi}^{2}}
\end{eqnarray}
for any function $h$ supported in the set $\{\xi\in\mathbb{R}^{2}:
|\xi|\leq 2^{k+1}\}$.

Let $\mathbf{e}= (\cos\theta, \sin\theta)$, rotate the x-axes to the
$\mathbf{e}$ direction, then \eqref{es52} changes to
\begin{eqnarray}\label{ple2}
\Big\|1_{[-2,2]}(t)\int_{\mathbb{R}^{2}}e^{
ix_{1}\xi_{1}}e^{ix_{2}\xi_{2} }e^{-itQ(\xi,\e)}\eta_{\leq
k+2}(\xi)h(\xi)d\xi\Big\|_{L^{2}_{x_{1}}L^{\infty}_{x_{2},t}}\lesssim
2^{k/2}(k+1)\|h\|_{L^{2}}
\end{eqnarray}
where $Q(\xi,\e)=
(\xi_{1}\cos\theta+\xi_{2}\sin\theta)^{2}-(\xi_{1}\sin\theta-\xi_{2}\cos\theta)^{2}$.
We use standard $TT^{*}$ argument to prove \eqref{ple2}, it means to
show that
\begin{eqnarray}\label{T}
\Big\|1_{[-4,4]}(t)\int_{\mathbb{R}^{2}}e^{
ix_{1}\xi_{1}}e^{ix_{2}\xi_{2} }e^{-itQ(\xi,\e)}\eta_{\leq
k+2}(\xi)d\xi\Big\|_{L^{1}_{x_{1}}L^{\infty}_{x_{2},t}}\lesssim
2^{k} (k+1)^{2}.
\end{eqnarray}
Notice that
\begin{eqnarray*}\label{ple}\Big|1_{[-4,4]}(t)\int_{\mathbb{R}^{2}}e^{
ix_{1}\xi_{1}}e^{ix_{2}\xi_{2} }e^{-itQ(\xi,\e)}\eta_{\leq
k+2}(\xi)d\xi\Big|\lesssim 2^{2k},
\end{eqnarray*}
rotate again, we get
\begin{eqnarray*}\label{ple}&&\Big|1_{[-4,4]}(t)\int_{\mathbb{R}^{2}}e^{
ix_{1}\xi_{1}}e^{ix_{2}\xi_{2} }e^{-itQ(\xi,\e)}\eta_{\leq
k+2}(\xi)d\xi\Big|\\
&=&\Big|1_{[-4,4]}(t)\int_{\mathbb{R}^{2}}e^{
ix_{1}\xi_{1}}e^{ix_{2}\xi_{2}
}e^{-it(\xi_{1}^{2}-\xi_{2}^{2})}\eta_{\leq k+2}(\xi)d\xi\Big|
\lesssim |t|^{-1}.
\end{eqnarray*}
Integration by parts when $|x_{1}|>2^{k+10}|t|$, then
\begin{eqnarray*}\label{ple3}
&&\Big|1_{[-4,4]}(t)\int_{\mathbb{R}^{2}}e^{
ix_{1}\xi_{1}}e^{ix_{2}\xi_{2} }e^{-itQ(\xi,\e)}\eta_{\leq
k+2}(\xi)d\xi\Big|\lesssim \frac{2^{2k}}{(1+2^{k}|x_{1}|)^{2}}.
\end{eqnarray*}
We collect all the estimates above and let $K(x_{1},x_{2},t)$ denote
the function in the left-hand side of \eqref{T}, then
\begin{eqnarray*}\label{ple4}
\sup_{x_{2},|t|<4}|K(x_{1},x_{2},t)|\lesssim
2^{k}|x_{1}|^{-1}\mathbf{1}_{\{2^{-k}\leq|x_1|<2^{k}\}}+\frac{2^{2k}}{(1+2^{k}|x_{1}|)^{2}}.
\end{eqnarray*}
The bound \eqref{T} follows.
\end{proof}

\begin{lemma}\label{lp6}
If $k \in \mathbb{Z}_{+}$, $t\in \mathbb{R}$ and $f\in Z_{k}$, then
\begin{eqnarray}\label{es4}
\sup_{t\in \mathbb{R}}\|\mathcal{F}_{(2+1)}^{-1}(f)(\cdot,
t)\|_{L^{2}_{x}}\leq C\|f\|_{Z_{k}}.
\end{eqnarray}
Thus
\begin{eqnarray}\label{es41}
\|\mathcal{F}_{(2+1)}^{-1}(f)(\cdot, t)\|_{L^{\infty}_{t,x}}\leq
C2^{k}\|f\|_{Z_{k}}.
\end{eqnarray}
\end{lemma}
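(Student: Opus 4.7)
The plan is to derive \eqref{es4} first; then \eqref{es41} is immediate from Bernstein's inequality in two dimensions, since any $f\in Z_k$ has spatial Fourier support in $\{|\xi|\les 2^k\}$, and hence $\|\mathcal{F}^{-1}_{(2+1)}(f)(\cdot,t)\|_{L^\infty_x}\les 2^k\|\mathcal{F}^{-1}_{(2+1)}(f)(\cdot,t)\|_{L^2_x}$ at every fixed $t$.

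For \eqref{es4}, decompose $f=\sum_j g_j+\sum_l f_{\e_l}$ using Lemma \ref{lp1} and treat the two types of pieces separately. Each $X_k$-piece $g_j$ supported in $D_{k,j}$ is handled by the standard $X^{s,b}$-type embedding: Plancherel in $x$ and Cauchy--Schwarz in $\tau$ on the slice $\{\tau : |P(\tau,\xi)|\leq 2^{j+1}\}$ (whose one-dimensional Lebesgue measure is $\les 2^j$) give $\sup_t\|\mathcal{F}^{-1}_{(2+1)}(g_j)(\cdot,t)\|_{L^2_x}\les 2^{j/2}\|g_j\|_{L^2}$; summation over $j$ controls this contribution by $\|f\|_{X_k}\les \|f\|_{Z_k}$.

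The essential work is to prove $\sup_t\|\mathcal{F}^{-1}_{(2+1)}(f_{\e})(\cdot,t)\|_{L^2_x}\les \|f_{\e}\|_{Y_k^{\e}}$ for $k\geq 100$. Apply the representation from Lemma \ref{lp4*}, which writes $f_{\e}$ as an $X_k$-error (already handled) plus a main term. Slice the representing function $h$ in its $y_1^{\e}$-variable, $h(y_1^{\e},\xi_2^{\e},\tau)=\int \phi_\alpha(\xi_2^{\e},\tau)\,\delta(y_1^{\e}-\alpha)\,d\alpha$, so that $\|h\|_{L^1_{y_1^{\e}}L^2_{\xi_2^{\e},\tau}}=\int \|\phi_\alpha\|_{L^2}\,d\alpha$. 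By linearity, Minkowski's inequality, and translation invariance in the $x_1^{\e}$-direction, it suffices to prove the single-slice estimate $\sup_t\|u_0(\cdot,t)\|_{L^2_x}\les \|\phi\|_{L^2_{\xi_2^{\e},\tau}}$, where $u_0$ is the inverse Fourier transform of the main term with $\phi(\xi_2^{\e},\tau)$ in place of the $y_1^{\e}$-Fourier integral of $h$. For this estimate, change variables $\tau\mapsto\mu=t_{\e}^*(\xi_2^{\e},\tau)$ -- a diffeomorphism on each branch of monotonicity by \eqref{pp4} with Jacobian $|d\tau/d\mu|=2|M_{\e}^*|\sim 2^k$ on the support of $\chi_{k,5}(M_{\e}^*)$. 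The consequence is that $\hat u_0(\xi_1^{\e},\xi_2^{\e},t)$ equals $2^{k/2}$ times a convolution in $\xi_1^{\e}$ with the kernel $K(z)=\eta_{\leq k-100}(z)/(z+i2^{-k})$. A direct calculation (contour integration on the pole $z=-i2^{-k}$, then convolution in Fourier with $\hat\eta_{\leq k-100}\in L^1$) shows $\hat K\in L^\infty$ uniformly in $k$, so $K\ast(\cdot)$ is an $L^2_{\xi_1^{\e}}$-multiplier of norm $O(1)$. Applying this bound, then undoing $\mu\mapsto\tau$ (Jacobian $\les 2^{-k}$ cancels the $2^k$ from $|f_\e|^2$), and finally Plancherel in $\xi_2^{\e}$ produce the desired estimate.

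The main subtle point is the uniform-in-$k$ boundedness of $\hat K$: a naive $L^1$-bound on $K$ would yield $\|K\|_{L^1}\sim k$, producing a logarithmic loss incompatible with \eqref{es4} (this is the same mechanism behind the $(k+1)$-factor in the last line of \eqref{lp3b}). The saving here is to treat $K$ as a bounded Fourier multiplier, whose Hilbert-transform-like pole structure makes it a singular-integral-type operator rather than an absolutely summable kernel.
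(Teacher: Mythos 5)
Your proof is correct and arrives at \eqref{es4} and \eqref{es41} by a valid route, but it is not the route the paper takes, and it is worth contrasting the two.

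For the $Y^\e_k$-piece, you invoke the detailed representation of Lemma~\ref{lp4*}, slice the representing function $h$ in $y_1^\e$, change variables $\tau\mapsto\mu=t^*_\e(\xi_2^\e,\tau)$ (Jacobian $\sim 2^{-k}$, by \eqref{ran1} and $M_\e^*\sim 2^k$), and are left with a convolution in $\xi_1^\e$ against the one-variable kernel $K(z)=\eta_{\leq k-100}(z)/(z+i2^{-k})$. You then observe that $\widehat K\in L^\infty$ uniformly in $k$ (Fourier transform of $(z+i2^{-k})^{-1}$ is a bounded exponential on a half-line, convolved against $\widehat{\eta_{\leq k-100}}\in L^1$), so $K\ast$ is an $O(1)$ $L^2$-multiplier; the $2^k$ from $|2^{k/2}|^2$ cancels the $2^{-k}$ Jacobian. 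This chain is tight, and your final remark correctly identifies that a naive $L^1$-kernel bound would cost a factor $k$ — exactly the loss built into the third line of \eqref{lp3b}. The paper reaches the same cancellation, but more economically: it never passes through Lemma~\ref{lp4*}. It uses only the raw representation \eqref{he}, Minkowski in $x_1^\e$, and then writes $P(\tau,\xi)+i=\tau+\mu+i$ with $\mu=\xi_1^2-\xi_2^2$, so that the $\tau$-integral defining $h_t^*$ is literally a Cauchy integral in the variable $\tau$; the boundedness of the Hilbert transform on $L^2$ gives $\|h^*_t\|_{L^2_{\xi_2^\e,\mu}}\leq C\|h'\|_{L^2_{\xi_2^\e,\tau}}$ directly, and the final change of variables is $\xi_1^\e\mapsto\mu$ with $\partial_{\xi_1^\e}\mu=2\xi\cdot\overline{\e}\sim 2^k$ on the support of $\chi_{k,10}$, again killing the prefactor $2^{k/2}$. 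Both proofs hinge on the same singular-integral idea — that the Cauchy kernel is an $L^2$-multiplier of norm $O(1)$, not an $L^1$ kernel — but the paper applies it in the $\tau$-direction at height $1$ (via $P+i$), while you apply it in the $\xi_1^\e$-direction at height $2^{-k}$ (via the explicit pole $\xi_1^\e-t^*_\e+i/2^k$). The paper's version is lighter, since it avoids the machinery of Lemma~\ref{lp4*} entirely and does not need to track the $t^*_\e$, $M^*_\e$ bookkeeping; your version has the virtue of making the kernel completely explicit. Your derivation of \eqref{es41} from \eqref{es4} by Bernstein is exactly what the paper implicitly does.
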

\begin{proof}[\textbf{Proof of Lemma \ref{lp6}}] By Plancherel
theorem, it suffices to prove that
\begin{eqnarray}\label{es42}
\Big\|\int_{\mathbb{R}}f(\xi,\tau)e^{it\tau}d\tau\Big\|_{L^{2}_{\xi}}\leq
C\|f\|_{Z_{k}}.
\end{eqnarray}
By Lemma \ref{lp1}, we assume that $f=g_{j}\in X_{k}$.
\begin{eqnarray}\label{es43}
\Big\|\int_{\mathbb{R}}g_{j}(\xi,\tau)e^{it\tau}d\tau\Big\|_{L^{2}_{\xi}}\leq
C\|g_{j}(\xi,\tau)\|_{L^{2}_{\xi}L^{1}_{\tau}}\leq
C2^{j/2}\|g_{j}\|_{L^{2}_{\xi,\tau}}
\end{eqnarray}
which gives \eqref{es42} in this case.

Turn to the case $k\geq 100$ and $f=f_{\mathbf{e}}\in
Y_{k}^\mathbf{e}$, $\mathbf{e}\in
\{\mathbf{e}_{1},\ldots,\mathbf{e}_{L}\}$, we need to prove that
\begin{eqnarray}\label{es44}
\Big\|\int_{\mathbb{R}}f_{\mathbf{e}}(\xi,\tau)e^{it\tau}d\tau\Big\|_{L^{2}_{\xi}}\leq
C\|f_{\mathbf{e}}\|_{Y_{k}^\mathbf{e}}.
\end{eqnarray}
By writing
\begin{eqnarray*}
h_{\mathbf{e}}(x,t)=2^{-k/2}\mathcal{F}^{-1}_{(2+1)}[(P(\tau,\xi)+i)\cdot
f_{\mathbf{e}}](x,t)
\end{eqnarray*}
we get
\begin{eqnarray*}
f_{\mathbf{e}}(\xi,\tau)=\chi_{k,10}(\xi\cdot
\overline{\mathbf{e}})\cdot
\frac{2^{k/2}}{P(\tau,\xi)+i}\mathcal{F}_{(2+1)}(h_{\mathbf{e}})(\xi,\tau)
\end{eqnarray*}
Let $\xi=\xi^\mathbf{e}_{1}\e+\xi^\mathbf{e}_{2}\mathbf{e}^{\perp}$,
$x=x^\mathbf{e}_{1}\e+x^\mathbf{e}_{2}\mathbf{e}^{\perp}$. For
\eqref{es44} it suffices to prove that
\begin{eqnarray}\label{es45}
2^{k/2}\Big\|\chi_{k,10}(\xi\cdot
\overline{\mathbf{e}})\int_{\mathbb{R}}\frac{1}{P(\xi,\tau)+i}\mathcal{F}_{(2+1)}(h)(\xi^\mathbf{e}_{1}\e+
\xi^\mathbf{e}_{2}\mathbf{e}^{\perp},\tau)e^{it\tau}d\tau\Big\|_{L^{2}_{\xi}}\leq
C\|h\|_{L_{\mathbf{e}}^{1,2}}
\end{eqnarray}
for any $h\in \mathcal{S}(\mathbb{R}^{d}\times\mathbb{R})$ and $t\in
\mathbb{R}$. As in the proof of Lemma \ref{lp4}, we define
\begin{eqnarray*}
h'(x^\mathbf{e}_{1},\xi_{2}^\mathbf{e},\tau)=\int_{\mathbb{R}\times
\mathbb{R}}h(x^\mathbf{e}_{1}\e+x^\mathbf{e}_{2}\mathbf{e}^{\perp},t)e^{-i(x^\mathbf{e}_{2}\cdot
\xi^\mathbf{e}_{2}+t\tau)}dx^\mathbf{e}_{2}dt
\end{eqnarray*}
So
\begin{eqnarray*}
\mathcal{F}_{(2+1)}(h)(\xi^\mathbf{e}_{1}\e+
\xi^\mathbf{e}_{2}\mathbf{e}^{\perp},\tau)=\int_{\mathbb{R}}h'(x^\mathbf{e}_{1},\xi_{2}^\mathbf{e},\tau)e^{-ix^\mathbf{e}_{1}\cdot
\xi^\mathbf{e}_{1}}dx^\mathbf{e}_{1},
\end{eqnarray*}
and
$\|h\|_{L^{1,2}_{\mathbf{e}}}=C\|h'\|_{L_{x^\mathbf{e}_{1}}^{1}L_{\xi^\mathbf{e}_{2},\tau}^{2}}$.
Let
\begin{eqnarray*}
h_{t}^{*}(x^\mathbf{e}_{1},\xi_{2}^\mathbf{e},\mu)=\int_{\mathbb{R}}\frac{1}{\tau
+\mu+i}h'(x^\mathbf{e}_{1},\xi_{2}^\mathbf{e},\tau)e^{it\tau}d\tau.
\end{eqnarray*}
The boundedness of Hilbert transform on $L^{2}(\mathbb{R})$ gives
\begin{eqnarray*}
\|h_{t}^{*}(x^\mathbf{e}_{1},\xi_{2}^\mathbf{e},\mu)\|_{L^{2}_{\xi_{2}^\mathbf{e},\mu}}\leq
C\|h'(x^\mathbf{e}_{1},\xi_{2}^\mathbf{e},\tau)\|_{L^{2}_{\xi_{2}^\mathbf{e},\tau}}
\text{ for any }x^\mathbf{e}_{1},t\in \mathbb{R}.
\end{eqnarray*}
Thus for \eqref{es45}, it suffices to prove that
\begin{eqnarray}\label{es46}
2^{k/2}\Big\|\chi_{k,10}(\xi\cdot
\overline{\mathbf{e}})\int_{\mathbb{R}}h_{t}^{*}(x^\mathbf{e}_{1},\xi_{2}^\mathbf{e},\xi_{1}^{2}-
\xi_{2}^{2})dx_{1}^\mathbf{e}\Big\|_{L^{2}_{\xi}}\leq
C\|h_{t}^{*}\|_{L_{x^\mathbf{e}_{1}}^{1}L_{\xi^\mathbf{e}_{2},\tau}^{2}}.
\end{eqnarray}
Just notice $
\partial_{\xi_{1}^\mathbf{e}}(\xi_{1}^{2}-
\xi_{2}^{2})=2\xi\cdot \overline{\mathbf{e}} $, \eqref{es46} follows
from changes of variables.
\end{proof}

\section{Linear Estimates}

In this section, we prove two linear estimates for the smi-group
$e^{it\square_x}$ by following some ideas in \cite{IK}. For
$\sigma\geq 0$ we define the normed spaces
\begin{equation}\label{no5}
F^\sigma=\{u\in
C(\mathbb{R}:H^\infty):\|u\|_{F^\sigma}^2=\sum_{k=0}^\infty
2^{2\sigma k}\|\eta_k^{(d)}(\xi)\cdot
\mathcal{F}_{(d+1)}u\|_{Z_k}^2<\infty\},
\end{equation}
and
\begin{equation}\label{no6}
\begin{split}
N^\sigma=&\{u\in C(\mathbb{R}:H^\infty):\\
&\|u\|_{N^\sigma}^2=\sum_{k=0}^\infty 2^{2\sigma
k}\|\eta_k^{(d)}(\xi)\cdot (\tau+\xi_1^2-\xi_2^2+i)^{-1}\cdot
\mathcal{F}_{(d+1)}u\|_{Z_k}^2<\infty\}.
\end{split}
\end{equation}

For $\phi\in H^\infty$ let $W(t)\phi\in C(\mathbb{R}:H^\infty)$
denote the solution of the free Schr\"{o}dinger evolution
\begin{equation}\label{ni1}
[W(t)\phi](x,t)=c_{0}\int_{\mathbb{R}^{2}}e^{ix\cdot\xi}
e^{-it(\xi_{1}^{2}-\xi_{2}^{2})}\mathcal{F}_{(2)}(\phi)(\xi)d\xi.
\end{equation}
Assume $\psi:\mathbb{R}\to[0,1]$ is an even smooth function
supported in the interval $[-8/5,8/5]$ and equal to $1$ in the
interval $[-5/4,5/4]$.

\begin{lemma}\label{le1}
If $\sigma\geq 0$ and $\phi\in H^{\infty}$ then $\psi(t)\cdot
[W(t)\phi]\in F^\sigma$ and
\begin{equation*}
\|\psi(t)\cdot [W(t)\phi]\|_{F^{\sigma}}\leq
C_\sigma\|\phi\|_{H^\sigma}.
\end{equation*}
\end{lemma}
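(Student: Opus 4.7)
The plan is to reduce everything to the $X_k$-component of $Z_k$, exploiting the fact that the space-time Fourier transform of $\psi(t)[W(t)\phi]$ concentrates on the characteristic surface $\{P(\tau,\xi)=0\}$, with Schwartz tails supplied by $\hat\psi$. Since $\|f\|_{Z_k}\leq \|f\|_{X_k}$ by definition of the sum space, it suffices to bound the $X_k$-norm.

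First I would compute the space-time Fourier transform. From \eqref{ni1} and the product-convolution rule one gets
\begin{equation*}
\mathcal{F}_{(2+1)}\bigl[\psi(t)\cdot W(t)\phi\bigr](\xi,\tau)
= c_0\,\widehat\phi(\xi)\,\widehat\psi\bigl(\tau+\xi_1^{2}-\xi_2^{2}\bigr)
= c_0\,\widehat\phi(\xi)\,\widehat\psi(P(\tau,\xi)).
\end{equation*}
Let $f_k(\xi,\tau)=\eta_k^{(2)}(\xi)\widehat\phi(\xi)\widehat\psi(P(\tau,\xi))$. By the definition of $F^\sigma$, the lemma reduces to the dyadic bound
\begin{equation*}
\|f_k\|_{Z_k}\leq C\,\|\eta_k^{(2)}(\xi)\widehat\phi(\xi)\|_{L^2_\xi},
\end{equation*}
which upon squaring, weighting by $2^{2\sigma k}$ and summing in $k$, yields the desired $H^\sigma\to F^\sigma$ estimate.

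The next step is to estimate $\|f_k\|_{X_k}$. For fixed $j\in\mathbb Z_+$, the change of variable $\mu=P(\tau,\xi)=\tau+\xi_1^2-\xi_2^2$ at fixed $\xi$ gives $d\mu=d\tau$, so
\begin{equation*}
\|\eta_j(P(\tau,\xi))f_k\|_{L^2_{\xi,\tau}}^{2}
=\|\eta_k^{(2)}\widehat\phi\|_{L^2_\xi}^{2}\cdot\int_{\mathbb R}|\eta_j(\mu)\widehat\psi(\mu)|^{2}\,d\mu.
\end{equation*}
Since $\psi$ is a smooth compactly supported function, $\widehat\psi$ is Schwartz, and for any $N\geq 1$,
\begin{equation*}
\int_{\mathbb R}|\eta_j(\mu)\widehat\psi(\mu)|^{2}\,d\mu
\leq C_N\,2^{-2Nj}\cdot 2^{j}
=C_N\,2^{(1-2N)j}.
\end{equation*}
Choosing $N=2$, summing the $X_k$-series, and using $Z_k$-domination by $X_k$:
\begin{equation*}
\|f_k\|_{Z_k}\leq \|f_k\|_{X_k}
=\sum_{j\geq 0}2^{j/2}\|\eta_j(P)f_k\|_{L^2}
\leq C\|\eta_k^{(2)}\widehat\phi\|_{L^2_\xi}\sum_{j\geq 0}2^{-j}\leq C\|\eta_k^{(2)}\widehat\phi\|_{L^2_\xi}.
\end{equation*}

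There is really no main obstacle here: the only subtlety is to verify that the support condition $\operatorname{supp}f_k\subset D_{k,\infty}$ required by the definition of $X_k$ is respected, which is immediate since $\eta_k^{(2)}(\xi)$ localizes $|\xi|\sim 2^k$ and $P(\tau,\xi)$ can take any value. Finally I would square and sum in $k$:
\begin{equation*}
\|\psi(t)W(t)\phi\|_{F^\sigma}^{2}
=\sum_{k\geq 0}2^{2\sigma k}\|f_k\|_{Z_k}^{2}
\leq C\sum_{k\geq 0}2^{2\sigma k}\|\eta_k^{(2)}\widehat\phi\|_{L^2}^{2}
\leq C\|\phi\|_{H^\sigma}^{2},
\end{equation*}
which gives the claimed bound with $C_\sigma$ depending only on $\psi$ and $\sigma$.
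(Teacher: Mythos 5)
Your proof is correct and follows essentially the same route as the paper: compute the space-time Fourier transform as $\widehat\phi(\xi)\,\widehat\psi(P(\tau,\xi))$, dominate the $Z_k$-norm by the $X_k$-norm, and use the Schwartz decay of $\widehat\psi$ to sum the dyadic $j$-series. The paper compresses the last step into ``directly from the definitions,'' whereas you spell out the change of variables $\mu=P(\tau,\xi)$ and the explicit decay estimate — same argument, just more detail.
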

\begin{proof}[Proof of Lemma \ref{le1}] A straightforward computation shows that
\begin{equation*}
\mathcal{F}[\psi(t)\cdot [W(t)\phi]](\xi,\tau)=
\mathcal{F}_{(2)}(\phi)(\xi)\cdot
\mathcal{F}_{(1)}(\psi)(P(\tau,\xi)).
\end{equation*}
Then, directly from the definitions,
\begin{equation*}
\begin{split}
\|\psi(t)\cdot
[W(t)\phi]\|^2_{F^{\sigma}}&=\sum_{k\in\mathbb{Z}_+}2^{2\sigma k}
\|\eta_k^{(2)}(\xi)\mathcal{F}_{(2)}(\phi)(\xi)\mathcal{F}_{(1)}(\psi)(P(\tau,\xi))\|^2_{Z_k}\\
&\leq \sum_{k\in\mathbb{Z}_+}2^{2\sigma
k}\|\eta_k^{(2)}(\xi)\mathcal{F}_{(2)}
(\phi)(\xi)\mathcal{F}_{(1)}(\psi)(P(\tau,\xi))\|^2_{X_k}\\
&\leq C\sum_{k\in\mathbb{Z}_+}2^{2\sigma k}\|\eta_k^{(2)}(\xi)\cdot \mathcal{F}_{(2)}(\phi)(\xi)\|^2_{L^2}\\
&\leq C\|\phi\|^2_{H^\sigma}.
\end{split}
\end{equation*}
\end{proof}

\begin{lemma}\label{le2}
If $\sigma\geq 0$ and $u\in N^{\sigma}$ then $\psi(t)\cdot
\int_0^tW(t-s)(u(s))\,ds\in F^\sigma$ and
\begin{equation*}
\Big|\Big|\psi(t)\cdot
\int_0^tW(t-s)(u(s))\,ds\Big|\Big|_{F^{\sigma}}\leq
C||u||_{N^{\sigma}}.
\end{equation*}
\end{lemma}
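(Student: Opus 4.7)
The plan is to reduce to a single dyadic frequency shell, derive a Fourier-side formula for the Duhamel term, and estimate it by splitting the input based on modulation. Since $F^\sigma$ and $N^\sigma$ are frequency-diagonal by \eqref{no5} and \eqref{no6}, it suffices to prove, for each $k\in\mathbb{Z}_+$, the single-shell bound
\[
\big\|\eta_k^{(2)}(\xi)\,\mathcal{F}_{(2+1)}[\psi(t) v]\big\|_{Z_k} \le C\,\big\|\eta_k^{(2)}(\xi)\,(P(\tau,\xi)+i)^{-1}\,\mathcal{F}_{(2+1)} u\big\|_{Z_k},
\]
where $v(t):=\int_0^t W(t-s)u(s)\,ds$; the full lemma then follows by summing $2^{2\sigma k}$ times the squares over $k$. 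A direct computation (expand $\mathcal{F}u(\xi,s)$ in $\tau$, evaluate $\int_0^t e^{isP(\tau,\xi)}\,ds = (e^{itP(\tau,\xi)}-1)/(iP(\tau,\xi))$, and take the $t$-Fourier transform of the product with $\psi$) yields
\[
\mathcal{F}_{(2+1)}[\psi v](\xi,\tau') = c\int_{\R}\frac{\widehat\psi(\tau'-\tau)-\widehat\psi(P(\tau',\xi))}{iP(\tau,\xi)}F(\xi,\tau)\,d\tau
\]
with $F:=\mathcal{F}_{(2+1)}u$; the apparent singularity at $P(\tau,\xi)=0$ is cancelled by the first-order vanishing of the numerator there.

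I would then set $G:=F/(P(\tau,\xi)+i)$ so that $\|G\|_{Z_k}$ is the bounding quantity, and split $F=F_{\mathrm{low}}+F_{\mathrm{high}}$ according to whether $|P(\tau,\xi)|\le 1$ or $>1$. On the low-modulation region $|P+i|\sim 1$, so $F_{\mathrm{low}}$ and $G_{\mathrm{low}}$ are comparable; here I use the alternative representation $(e^{itP(\tau,\xi)}-1)/(iP(\tau,\xi))=\int_0^1 t\,e^{i\lambda tP(\tau,\xi)}\,d\lambda$, which is bounded by $|t|\le 2$ on $\mathrm{supp}(\psi)$ and produces an output comfortably controlled in $X_k\subset Z_k$. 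On the high-modulation region $|P|\sim|P+i|$ and $F/P\sim G$; here I split the formula into $A+B$ with
\[
A(\xi,\tau')=c\int_{\R}\frac{\widehat\psi(\tau'-\tau)}{iP(\tau,\xi)}F(\xi,\tau)\,d\tau, \quad B(\xi,\tau')=-c\,\widehat\psi(P(\tau',\xi))\int_{\R}\frac{F(\xi,\tau)}{iP(\tau,\xi)}\,d\tau.
\]
The separable piece $B$ has the factor $\widehat\psi(P(\tau',\xi))$ (localized at modulation $O(1)$ with Schwartz decay away) times a $\xi$-only integral controlled in $L^2_\xi$ via Lemma \ref{lp6} by $\|G\|_{Z_k}$; the convolution piece $A=\widehat\psi*_\tau(F/(iP))$ averages only in $\tau$ at unit scale and thereby preserves the $X_k$ modulation structure of $F_{\mathrm{high}}/(iP)\sim G_{\mathrm{high}}$ up to harmless Schwartz tails.

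The main obstacle will be the $Y_k^{\e}$ component of $\mathcal{F}[\psi v]$, since $Y_k^{\e}$ is built from the physical-space norm $\|\mathcal{F}^{-1}[(P(\tau,\xi)+i)\cdot]\|_{L^{1,2}_{\e}}$ and does not transparently reflect the Fourier-side analysis. For this I would pass to physical space: using $(i\partial_t+\square)v=u$ with $v(0)=0$, a direct computation shows that $\mathcal{F}^{-1}[(P(\tau,\xi)+i)\mathcal{F}(\psi v)]$ is a linear combination of $\psi(t) u$, $\psi'(t) v$, and $\psi(t) v$. The $\psi u$ contribution is controlled directly in $L^{1,2}_{\e}$ by $2^{k/2}\|G\|_{Y_k^{\e}}$ through the definition of $Y_k^{\e}$. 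The remaining terms $\psi' v$ and $\psi v$ would be dualized against $L^{\infty,2}_{\e}$ test functions and estimated via the local-smoothing Lemma \ref{lp4}: I unfold the Duhamel integral and apply the $L^{\infty,2}_{\e}$ smoothing of $W(t)$; the portion of the Fourier support where the directional cutoff $\chi_{k,30}(\xi\cdot\overline{\e})$ is absent is transferred back to $X_k$ via Lemma \ref{lp3}.
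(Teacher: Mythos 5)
Your Fourier-side setup is the same as the paper's: you correctly reduce to a single dyadic shell, derive the kernel
$\widehat\psi(\tau-\tau')-\widehat\psi(P(\tau,\xi))$ over $P(\tau',\xi)$, note the removable singularity, and set $G=F/(P+i)$ so that $\|G\|_{Z_k}$ is the quantity to control. You also correctly identify $\widehat\psi(P(\tau,\xi))\int G\,d\tau'$ as the separable piece and control it via Lemma \ref{lp6}, which is exactly the paper's argument for the second term of \eqref{ni8}. And you are right that the $Y^{\e}_k$ component is the crux of the matter.

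Where you diverge, and where the argument has a genuine gap, is in the treatment of that $Y^{\e}_k$ piece. The paper never needs any physical-space bound on the Duhamel term $v$ itself. Its key move is the elementary algebraic split
$f^{\e}=\tfrac{P}{P+i}f^{\e}+\tfrac{i}{P+i}f^{\e}$: the second summand lies in $X_k$ by Lemma \ref{lp3}, and in $T\bigl(\tfrac{P}{P+i}f^{\e}\bigr)$ the factor $P(\tau',\xi)$ cancels identically, leaving only $\widehat\psi*_\tau f^{\e}-\widehat\psi(P(\tau,\xi))\int f^{\e}\,d\tau'$, both of which are then estimated without ever touching $v$. Your route instead writes
$(i\partial_t+\square-i)(\psi v)=i\psi'v+i\psi u-i\psi v$ and tries to bound $\|\psi'v\|_{L^{1,2}_{\e}}$ and $\|\psi v\|_{L^{1,2}_{\e}}$ by $2^{k/2}\|G\|_{Y^{\e}_k}=\|u\|_{L^{1,2}_{\e}}$, i.e.\ by an $L^{1,2}_{\e}\to L^{1,2}_{\e}$ bound for the Duhamel operator. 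That is not what Lemma \ref{lp4} gives. Local smoothing and its dual compose (via a Christ--Kiselev argument) to a bound of the form $\|\psi\int_0^tW(t-s)u(s)\,ds\|_{L^{\infty,2}_{\e}}\lesssim 2^{-k}\|u\|_{L^{1,2}_{\e}}$, which is an $L^{1,2}_{\e}\to L^{\infty,2}_{\e}$ bound with a $2^{-k}$ gain. Running your duality against $w\in L^{\infty,2}_{\e}$: unfolding the integral and applying H\"older leaves you needing $\bigl\|\int_s^\infty\psi'(t)W(s-t)w(\cdot,t)\,dt\bigr\|_{L^{\infty,2}_{\e}}\lesssim\|w\|_{L^{\infty,2}_{\e}}$, an $L^{\infty,2}_{\e}\to L^{\infty,2}_{\e}$ adjoint-Duhamel bound; that also is not available from Lemma \ref{lp4}, and indeed fails because there is no spatial localization to convert the $L^\infty_r$ control of $w$ into the $L^1_r$ control the duality step secretly requires. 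So the last paragraph of your plan does not close.

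A secondary issue: in your high-modulation branch you write that the convolution piece $A=\widehat\psi*_\tau(F/(iP))$ ``preserves the $X_k$ modulation structure of $G_{\mathrm{high}}$''. But $G_{\mathrm{high}}=\eta_{>1}(P)\,G$ is not in $X_k$; $Y^{\e}_k$ functions do not decompose cleanly by modulation, and Lemma \ref{lp3}\eqref{lp3b} only puts $\eta_{>2k+C}(P)\cdot f$ (not $\eta_{>1}(P)\cdot f$) in $X_k$ with a uniform constant. So even before you reach the ``main obstacle'' paragraph, the high-modulation estimate silently assumes an $X_k$ structure that is not there. This is precisely the reason the paper does \emph{not} split by modulation, but instead splits the input via the $Z_k$-decomposition of Lemma \ref{lp1} and treats the $X_k$ case with the single uniform kernel bound $\bigl|\tfrac{\widehat\psi(\mu-\mu')-\widehat\psi(\mu)}{\mu'}(\mu'+i)\bigr|\lesssim (1+|\mu|)^{-4}+(1+|\mu-\mu'|)^{-4}$, which handles all $j$ at once. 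To repair your argument you would want to replace the modulation split and the physical-space Duhamel step by the paper's $\tfrac{P}{P+i}+\tfrac{i}{P+i}$ decomposition of $f^{\e}$, which is the essential idea you are missing.
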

\begin{proof}[Proof of Lemma \ref{le2}] A straightforward computation shows that
\begin{equation*}
\begin{split}
\mathcal{F}\Big[\psi(t)\cdot& \int_0^tW(t-s)(u(s))ds\Big](\xi,\tau)\\
&=c\int_\mathbb{R}\mathcal{F}(u)(\xi,\tau')\frac{\widehat{\psi}(\tau-\tau')-\widehat{\psi}
(P(\tau,\xi))}{P(\tau',\xi)}d\tau',
\end{split}
\end{equation*}
where, for simplicity of notation,
$\widehat{\psi}=\mathcal{F}_{(1)}(\psi)$. For $k\in\mathbb{Z}$ let
$$f_k(\xi,\tau')=\mathcal{F}(u)(\xi,\tau')\cdot \eta_k^{(2)}(\xi)\cdot (P(\tau',\xi)+i)^{-1}.$$
For $f\in Z_k$ let
\begin{equation}\label{ar202}
T(f)(\xi,\tau)=\int_\mathbb{R}f(\xi,\tau')\frac{\widehat{\psi}(\tau-\tau')-
\widehat{\psi}(P(\tau,\xi))}{P(\tau',\xi)}(P(\tau',\xi)+i)\,d\tau'.
\end{equation}
where $P(\tau,\xi)=\tau+\xi_1^2-\xi_2^2$. In view of the
definitions, it suffices to prove that
\begin{equation}\label{ni5}
||T||_{Z_k\to Z_k}\leq C\text{ uniformly in }k\in\mathbb{Z}.
\end{equation}

To prove \eqref{ni5} we use the representation \eqref{lp11}. Assume
first that $f=g_j$ is supported in $D_{k,j}$. Let
$g_j^\#(\xi,\mu')=g_j(\xi,\mu'-\xi_1^2+\xi_2^2)$ and
$[T(g)]^\#(\xi,\mu)=T(g)(\xi,\mu-\xi_1^2+\xi_2^2)$. Then,
\begin{equation}\label{ni6}
[T(g)]^\#(\xi,\mu)=\int_\mathbb{R}g_j^\#(\xi,\mu')\frac{\widehat{\psi}(\mu-\mu')-
\widehat{\psi}(\mu)}{\mu'}(\mu'+i)\,d\mu'.
\end{equation}
We use the elementary bound
\begin{equation*}
\Big|\frac{\widehat{\psi}(\mu-\mu')-\widehat{\psi}(\mu)}{\mu'}(\mu'+i)\Big|\leq
C[(1+|\mu|)^{-4}+(1+|\mu-\mu'|)^{-4}].
\end{equation*}
Then, using \eqref{ni6},
\begin{equation*}
\begin{split}
|T(g)^\#(\xi,\mu)|&\leq C(1+|\mu|)^{-4}\cdot 2^{j/2}\Big[\int_{\mathbb{R}}|g_j^\#(\xi,\mu')|^2\,d\mu'\Big]^{1/2}\\
&+C\mathbf{1}_{[-2^{j+10},2^{j+10}]}(\mu)\int_{\mathbb{R}}|g_j^\#(\xi,\mu')|(1+|\mu-\mu'|)^{-4}\,d\mu'.
\end{split}
\end{equation*}
It follows from the definition of the spaces $X_k$ that
\begin{equation}\label{ni7}
||T||_{X_k\to X_k}\leq C\text{ uniformly in }k\in\mathbb{Z}_+,
\end{equation}
as desired.

Assume now that $f=f^{\mathbf{e}}\in Y^\mathbf{e}_k$,  $k\geq 100$,
$\mathbf{e}\in\{\mathbf{e}_1,\ldots,\mathbf{e}_L\}$. We write
\begin{equation*}
f^{\mathbf{e}}(\xi,\tau')=\frac{\tau'+\xi_1^2-\xi_2^2}{\tau'+\xi_1^2-\xi_2^2+i}f^{\mathbf{e}}
(\xi,\tau')+\frac{i}{\tau'+\xi_1^2-\xi_2^2+i}f^{\mathbf{e}}(\xi,\tau').
\end{equation*}
Using Lemma \ref{lp3},
$||i(\tau'+\xi_1^2-\xi_2^2+i)^{-1}f^{\mathbf{e}}(\xi,\tau')
||_{X_k}\leq C||f^\e||_{Y_k^\mathbf{e}}$. In view of \eqref{ar202}
and \eqref{ni7}, for \eqref{ni5} it suffices to prove that
\begin{equation}\label{ni8}
\Big|\Big|\int_\mathbb{R}f^{\mathbf{e}}(\xi,\tau')\widehat{\psi}(\tau-\tau')\,d\tau'\Big|\Big|_{Z_k}+
\Big|\Big|\widehat{\psi}(P(\tau,\xi))\int_\mathbb{R}f^{\mathbf{e}}(\xi,\tau')\,d\tau'\Big|\Big|_{X_k}\leq
C||f^{\mathbf{e}}||_{Y_k^{\mathbf{e}}}.
\end{equation}
The bound for the first term in the left-hand side of \eqref{ni8}
follows easily from the definition. The bound for the second term in
the left-hand side of \eqref{ni8} follows from \eqref{es44} with
$t=0$.
\end{proof}

\section{Trilinear estimates}
In this section, we set up a trilinear estimate. First we
 reduce the nonlinear term of \eqref{t21}. Let
$u(t)\in C(\mathbb{R}: H^{\infty})$, and write the nonlinear term of
\eqref{t21} as
\begin{eqnarray}\label{tri}
\left \{\begin{array}{ll} \displaystyle \mathfrak{F}(u)
=\frac{2\bar{u}}{1+u\bar{u}}[(\partial_{x_1}u)^{2}-(\partial_{x_2}u)^{2}]+ib(\phi_{x_1}u_{x_2}+\phi_{x_2}u_{x_1})
\\
\Delta
\phi=4i\Big[\Big(\frac{u_{x_1}\bar{u}}{1+|u|^2}\Big)_{x_2}-\Big(\frac{u_{x_2}\bar{u}}{1+|u|^2}\Big)_{x_1}\Big],
\end{array}
\right.
\end{eqnarray}
thus we have
\begin{eqnarray}\label{phi}
\phi_{x_1}u_{x_2}+\phi_{x_2}u_{x_1} &=&4i\Big[\frac{\partial_{x_1}
\partial_{x_2}}{\Delta}\Big(\frac{u_{x_1}\bar{u}}{1+|u|^2}\Big)-\frac{\partial_{x_1}
^2}{\Delta}\Big(\frac{u_{x_2}\bar{u}}{1+|u|^2}\Big)\Big]u_{x_2}\nonumber\\
&&+4i\Big[\frac{
\partial^2_{x_2}}{\Delta}\Big(\frac{u_{x_1}\bar{u}}{1+|u|^2}\Big)-\frac{\partial_{x_1}
\partial_{x_2}}{\Delta}\Big(\frac{u_{x_2}\bar{u}}{1+|u|^2}\Big)\Big]u_{x_1}.
\end{eqnarray}
Here $\frac{\partial_{x_1}
\partial_{x_2}}{\Delta}$, $\frac{\partial_{x_1}
^2}{\Delta}$, $\frac{
\partial^2_{x_2}}{\Delta}$ are defined by Fourier multipliers
$\frac{\xi_1 \xi_2}{\xi^2_1 +\xi^2_2}$, $\frac{\xi_1 ^2}{\xi^2_1
+\xi^2_2}$, $\frac{ \xi_2^2}{\xi^2_1 +\xi^2_2}$. And we use
$\mathcal{R}_i (i=1,2,3)$ to denote these three $L^2$-bounded
operators accordingly. Thus the nonlinear term of \eqref{t21} can be
written as
\begin{eqnarray}\label{F}
\mathfrak{F}(u) &=&\mathcal{N}_{0}(u)[(\partial_{x_{1}}u)^{2}-
(\partial_{x_{2}}u)^{2}]-4b\mathcal{R}_1(\mathcal{N}_{0}(u)\partial_{x_1}
u)\partial_{x_2} u\nonumber\\
&&+4b\mathcal{R}_2(\mathcal{N}_{0}(u)\partial_{x_2} u)\partial_{x_2}
u-4b\mathcal{R}_3(\mathcal{N}_{0}(u)\partial_{x_1} u)\partial_{x_1}
u\\
&&+4b\mathcal{R}_1(\mathcal{N}_{0}(u)\partial_{x_2} u)\partial_{x_1}
u\nonumber,
\end{eqnarray}
where $\mathcal{N}_{0}(u)=2\bar{u}/(1+|u|^2)$.

We consider here the nonlinear term
\begin{eqnarray}\label{N}
\mathcal{N}(u)=\psi(t)\mathfrak{F}(u)\in C(\mathbb{R}: H^{\infty}),
\end{eqnarray}
and are looking for the control of
$$
\|\mathcal{N}(u)-\mathcal{N}(v)\|_{N^{\sigma}}, \quad \sigma>3/2,
$$
where $u,v\in F^{\sigma}$.

For $k\in \mathbb{Z_{+}}$ we define the normed spaces
$$
\widetilde{Z}_{k}=\{f\in L^{2}(\mathbb{R}^{2}\times\mathbb{R}): supp
f \in I_{k}\times\mathbb{R},\quad  \|f\|_{\widetilde{Z}_k}< \infty\}
$$
where
\begin{eqnarray}\label{Z}
\|f\|_{\widetilde{Z}_k}&=&2^{-k/2}(k+1)^{-2}\sup_{\mathbf{e}\in
S^{1}}\|1_{[-2,2]}(t)\mathcal{F}^{-1}_{(2+1)}(f)\|_{L^{2,\infty}_{\mathbf{e}}}\nonumber\\&&+2^{k/2}\sup_{\mathbf{e}\in
S^{1}}\|\mathcal{F}^{-1}_{(2+1)}[f\cdot\chi_{k,20}(\xi\cdot
\overline{\mathbf{e}})]\|_{L^{\infty,2}_{\mathbf{e}}}.
\end{eqnarray}
For $\sigma\geq 0$ we define the normed spaces
$$
\widetilde{F}^{\sigma}=\left\{u\in C(\mathbb{R}:H^{\infty}): \quad
\|u\|^{2}_{\widetilde{F}^{\sigma}}=\sum_{k=0}^{\infty}2^{2\sigma
k}\|\eta_{k}(\xi)\mathcal{F}^{-1}_{(2+1)}(u)\|^{2}_{\widetilde{Z}_k}<
\infty\right\},
$$
and
$$
\overline{F}^{\sigma}=\left\{u\in C(\mathbb{R}:H^{\infty}): \quad
\overline{u}\in F^{\sigma},\quad
\|u\|_{\overline{F}^{\sigma}}=\|\overline{u}\|_{F^{\sigma}}\right\}.
$$
It is easy to see that
$$
\|\overline{u}\|_{F^{\sigma}+\overline{F}^{\sigma}}=\|u\|_{F^{\sigma}+\overline{F}^{\sigma}}
$$
By Lemma \ref{lp4} and Lemma \ref{lp5}, we obtain
\begin{eqnarray}\label{em}
\left \{\begin{array}{ll} \displaystyle \|f\|_{\widetilde{Z}_k}\leq
C\|f\|_{Z_{k}}\text{ for any } k\in \mathbb{Z}_{+} \text{ and }f\in Z_{k}\\
\|u\|_{\widetilde{F}^{\sigma}}\leq
C\|u\|_{F^{\sigma}+\overline{F}^{\sigma}}\text{ for any } \sigma\geq
0 \text{ and }u\in F^{\sigma}+\overline{F}^{\sigma}.
\end{array}
\right.
\end{eqnarray}

For $\sigma\in \mathbb{R}$ let $J^{\sigma}$ denote the operator
defined by the Fourier multiplier $(\xi,\tau)\rightarrow
(1+|\xi|^{2})^{\sigma/2}$.

\begin{lemma}\label{n1}
For $\sigma>3/2$ we have
\begin{eqnarray}\label{n1e0}
\|\mathcal{R}(J^{1}(u_1)J^{1}(u_2))J^{1}(u_3)\|_{N^{\sigma}}\leq
C_{\sigma}\|u_{1}\|_{\widetilde{F}^{\sigma}}\|u_{2}\|_
{\widetilde{F}^{\sigma}}\|u_{3}\|_{\widetilde{F}^{\sigma}},
\end{eqnarray}
where $\mathcal{R}$ denotes $ I$,$\frac{\partial_x
\partial_y}{\Delta}$, $\frac{\partial_x
^2}{\Delta}$, or $\frac{
\partial^2_y}{\Delta}$; and $J^{\sigma}$
denotes the operator defined by the Fourier multiplier
$(\xi,\tau)\rightarrow (1+|\xi|^{2})^{\sigma/2}$.
\end{lemma}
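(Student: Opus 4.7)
The plan is to reduce to frequency-localized trilinear pieces via a Littlewood--Paley decomposition and then apply H\"older's inequality in the mixed spaces $L^{p,q}_{\mathbf{e}}$: place the factor of highest spatial frequency in the local-smoothing norm $L^{\infty,2}_{\mathbf{e}}$ and the two remaining factors in the maximal-function norm $L^{2,\infty}_{\mathbf{e}}$. The arithmetic that makes this work is the embedding
\begin{equation*}
L^{\infty,2}_{\mathbf{e}}\cdot L^{2,\infty}_{\mathbf{e}}\cdot L^{2,\infty}_{\mathbf{e}}\subset L^{1,2}_{\mathbf{e}},
\end{equation*}
so that the product lands in exactly the norm used to define $Y_{k_4}^{\mathbf{e}}$, and both side bounds are already built into the two summands of the $\widetilde Z_k$ norm~\eqref{Z}.

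First I would remove the operator $\mathcal R$. After an output frequency projection at scale $2^{k_4}$, each symbol in the list $\{I,\,\xi_1\xi_2/|\xi|^2,\,\xi_1^2/|\xi|^2,\,\xi_2^2/|\xi|^2\}$ becomes a smooth multiplier on $\{|\xi|\sim 2^{k_4}\}$ whose inverse Fourier transform has $L^1$ mass bounded uniformly in $k_4$, so Lemma~\ref{lp3}(1) absorbs $\mathcal R$ into a constant. I would then Littlewood--Paley decompose $u_i=\sum_{k_i}u_{i,k_i}$, write the output as the frequency-$2^{k_4}$ piece of $(J^1u_{1,k_1})(J^1u_{2,k_2})(J^1u_{3,k_3})$, and by symmetry assume $k_1\geq k_2\geq k_3$; Fourier support considerations give $k_1\geq k_4-O(1)$ with $k_1\sim\max(k_2,k_4)$. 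The three $J^1$ operators contribute the harmless constant $2^{k_1+k_2+k_3}$.

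To bound the frequency-$2^{k_4}$ piece of the output in $N^\sigma$ I place $(P(\tau,\xi)+i)^{-1}\eta_{k_4}^{(2)}(\xi)\mathcal F(\cdot)$ into the $Y_{k_4}^{\mathbf{e}}$-component of $Z_{k_4}$ (the orthogonal $X_{k_4}$-piece, coming from the high-modulation region $|P(\tau,\xi)|\gtrsim 2^{2k_4}$, is handled analogously but with spare room). By the definition of $Y_{k_4}^{\mathbf{e}}$ this reduces matters to estimating $2^{-k_4/2}\|u_{1,k_1}u_{2,k_2}u_{3,k_3}\|_{L^{1,2}_{\mathbf{e}}}$. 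I angularly split the support of $\widehat{u_{1,k_1}}$ into the finitely many sectors attached to $\{\mathbf{e}_l\}_{l=1}^L$, and for each sector select the corresponding $\mathbf{e}=\mathbf{e}_l$ with $|\xi\cdot\overline{\mathbf{e}}|\sim 2^{k_1}$ on that support, so that the cutoff $\chi_{k_1,30}(\xi\cdot\overline{\mathbf{e}})\equiv 1$ there. H\"older then yields
\begin{equation*}
\|u_{1,k_1}u_{2,k_2}u_{3,k_3}\|_{L^{1,2}_{\mathbf{e}}}\leq\|u_{1,k_1}\|_{L^{\infty,2}_{\mathbf{e}}}\|u_{2,k_2}\|_{L^{2,\infty}_{\mathbf{e}}}\|u_{3,k_3}\|_{L^{2,\infty}_{\mathbf{e}}},
\end{equation*}
which the definition~\eqref{Z} of $\widetilde Z_k$ majorizes by $2^{-k_1/2}\cdot 2^{k_2/2}(k_2+1)^2\cdot 2^{k_3/2}(k_3+1)^2$ times $\prod_i\|\eta_{k_i}^{(2)}\widehat{u_i}\|_{\widetilde Z_{k_i}}$.

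Combining the $N^\sigma$ weight $2^{\sigma k_4}$, the $J^1$ contribution $2^{k_1+k_2+k_3}$, the $Y_{k_4}^{\mathbf{e}}$ normalization $2^{-k_4/2}$, and the H\"older gain above, the prefactor (modulo logarithms) becomes $2^{(\sigma-1/2)k_4+k_1/2+3k_2/2+3k_3/2}$; dividing by the $\widetilde F^\sigma$ weights $2^{\sigma(k_1+k_2+k_3)}$, the critical case $k_1\sim k_2\sim k_3\sim k_4\sim K$ yields the net factor $2^{(2\sigma-3)K}$, which decays precisely when $\sigma>3/2$, after which Cauchy--Schwarz closes the sum over $(k_1,k_2,k_3,k_4)$. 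The hard point will be exactly this high-high-high-high balance, which pins down the threshold $\sigma>3/2$; a secondary technical nuisance is to verify, in the asymmetric regimes $k_2,k_3\ll k_1$, that the $\ell^2$-summation in the low indices together with the $(k_i+1)^2$ logarithmic losses is harmless, and that the finite angular decomposition over $\{\mathbf{e}_l\}_{l=1}^L$ only costs an $L$-dependent multiplicative constant.
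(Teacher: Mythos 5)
Your proposal captures the right high--level mechanism --- Littlewood--Paley decomposition, the H\"older placement $L^{\infty,2}_{\mathbf{e}}\cdot L^{2,\infty}_{\mathbf{e}}\cdot L^{2,\infty}_{\mathbf{e}}\subset L^{1,2}_{\mathbf{e}}$ with local smoothing on the highest frequency and maximal function on the rest, and the dyadic arithmetic pinning down $\sigma>3/2$ from the high--high--high--high balance --- and this does match the paper's. But the execution has three genuine gaps that the paper handles differently.

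First, $\mathcal{R}$ cannot be removed by an output projection and Lemma~\ref{lp3}(1). That lemma multiplies the \emph{output} by an $L^1$--Fourier--transform symbol, while $\mathcal{R}$ acts only on the inner product $J^1(u_1)J^1(u_2)$; the frequency of that argument is $\xi-\eta$ ($\xi$ output, $\eta$ the $u_3$ frequency), which is not localized by $P_{k_4}$ and in particular can sit near the origin where the Riesz symbols are singular. The paper instead decomposes $\mathcal{R}(P_{k_1}u_1P_{k_2}u_2)=\sum_{k'\leq k_{\mathrm{med}}+1}P_{k'}\mathcal{R}(\cdot)$ and uses $\|P_{k'}\mathcal{R}f\|_{L^{1,\infty}_{\e}}\lesssim\|f\|_{L^{1,\infty}_{\e}}$ per dyadic piece, at the cost of a $(k_{\mathrm{med}}+1)$ logarithmic factor (cf.\ \eqref{n1e333}), or uses $L^2$--boundedness of $\mathcal{R}$ when the $L^2$ intermediate step is available.

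Second, because $\mathcal{R}$ sits only on $u_1u_2$, the three factors are \emph{not} symmetric, so ``assume $k_1\geq k_2\geq k_3$'' does not reduce the number of cases. The paper splits into $k_1$ or $k_2$ maximal (where $\mathcal{R}$ is disposed of in $L^2$) and $k_3$ maximal (where $\mathcal{R}$ must be localized and costs a log).

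Third --- and most importantly --- your single H\"older step uses one direction $\mathbf{e}$ for all three norms, but $\mathbf{e}$ is forced by the output sector when placing the result in $Y^{\mathbf{e}}_{k_4}$, while the $L^{\infty,2}_{\mathbf{e}}$ bound on the high--frequency factor requires that factor to lie in the sector around $\mathbf{e}$ (so that $\chi_{k_1,30}(\xi\cdot\overline{\mathbf{e}})\equiv1$). When the high--frequency factor is $u_1$ and $k_1\sim k_2\sim k_4$, the direction of $\xi_1$ is essentially unrelated to the output direction, so the cutoff is not identically $1$ and the $\widetilde{Z}_{k_1}$ norm does not control $\|u_{1,k_1}\|_{L^{\infty,2}_{\mathbf{e}}}$. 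You re-define $\mathbf{e}$ mid-argument to be the $u_1$ sector direction $\mathbf{e}_l$, but then the $L^{1,2}_{\mathbf{e}_l}$ norm you produce is not the one controlling the $Y^{\mathbf{e}}_{k_4}$ component of the output. The paper avoids this by a two-step H\"older through $L^2$: first
$\|P_{k_3}u_3\cdot g\|_{L^{1,2}_{\mathbf{e}}}\leq\|P_{k_3}u_3\|_{L^{2,\infty}_{\mathbf{e}}}\|g\|_{L^2}$
with $\mathbf{e}$ the output direction, and then
$\|P_{k_1}u_1P_{k_2}u_2\|_{L^2}\leq\|P_{k_1}u_1\|_{L^{\infty,2}_{\mathbf{e}_0}}\|P_{k_2}u_2\|_{L^{2,\infty}_{\mathbf{e}_0}}$
with a \emph{different} $\mathbf{e}_0$ adapted to the sector of $u_1$ (Case~1). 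In Case~2, where the outer factor $u_3$ has the dominant frequency and $k_3\geq k_{\mathrm{med}}+20$, the support condition forces $\xi_3\approx\xi_4$, so $u_3$ can legitimately be placed in the output sector and your one-direction H\"older does apply there.

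So the intuition, the placement of norms, and the $\sigma>3/2$ arithmetic are all correct, but the removal of $\mathcal{R}$, the unjustified symmetrization, and the direction mismatch in the one-step H\"older all need to be repaired along the lines above before this becomes a proof.
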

\begin{proof}[\textbf{Proof of Lemma \ref{n1}}]Let $k_{\max}=\max\{k_1,k_2,k_3\}$, and similarly $k_{\med}$
and $k_{\min}$. In view of the definition, for \eqref{n1e0}, it
suffices to prove a dyadic trilinear estimate
\begin{eqnarray}\label{n1e1}
&&2^{k_{1}+k_{2}+k_{3}}\|(P(\tau,\xi)+i)^{-1}\mathcal{F}_{(2+1)}[P_{k}(\mathcal{R}(P_{k_1}u_1P_{k_2}
u_2)P_{k_3}u_3)]\|_{Z_{k}}\nonumber\\
&\leq&
C2^{\frac{k_{\max}-k}{2}}2^{\frac{3k_{\med}+3k_{\min}}{2}}(k_{\med}+1)^3(k_{\min}+1)^3\nonumber\\
&&\times\|P_{k_1}u_1\|_{\widetilde{Z}_{k_1}}\|P_{k_2}u_2\|_{\widetilde{Z}_{k_2}}\|P_{k_3}u_3\|_{\widetilde{Z}_{k_3}}.
\end{eqnarray}
For $\mathbf{e}\in \{\mathbf{e}_{1},\ldots, \mathbf{e}_{L}\}$, let
\begin{eqnarray*}
\eta_{k,\mathbf{e}}(\xi)=\left \{\begin{array}{ll} \displaystyle
\eta_{k}^{(2)}(\xi)\cdot \eta^{+}_{[k-5,k+5]}
(\xi\cdot \overline{\mathbf{e}})\text{ if } k\geq 100 ;\\
\eta_{k}^{(2)}(\xi)\hspace{2.9cm}\text{ if } k<100.
\end{array}
\right.
\end{eqnarray*}
For \eqref{n1e1} it suffices to prove that for any $\mathbf{e}\in
\{\mathbf{e}_{1},\ldots, \mathbf{e}_{L}\}$,
\begin{eqnarray}\label{n1e11}
&&2^{k_{1}+k_{2}+k_{3}}\|\eta_{k,\mathbf{e}}(\xi)(P(\tau,\xi)+i)^{-1}\mathcal{F}_{(2+1)}[P_{k}(\mathcal{R}(P_{k_1}u_1P_{k_2}
u_2)P_{k_3}u_3)]\|_{Z_{k}}\nonumber\\
&&\leq
C2^{\frac{k_{\max}-k}{2}}2^{\frac{3k_{\med}+3k_{\min}}{2}}(k_{\med}+1)^3(k_{\min}+1)^3\nonumber\\
&&\times
\|P_{k_1}u_1\|_{\widetilde{Z}_{k_1}}\|P_{k_2}u_2\|_{\widetilde{Z}_{k_2}}\|P_{k_3}u_3\|_{\widetilde{Z}_{k_3}}.
\end{eqnarray}
We first consider $k_1\geq k_{\max}-20$. So $k_1\geq k-25$. By an
angular partition of unity in frequency, we can assume
$\mathcal{F}(P_{k_1}u_1)$ is supported in the set
$$
\{(\xi,\tau): |\xi|\in[2^{k_1-1},2^{k_1+1}]\quad and \quad \xi\cdot
\overline{\mathbf{e}}_{0}\geq 2^{k_1-5}\}
$$
for some vector $\mathbf{e}_0\in \mathbb{S}^{1}$. Thus we have
\begin{eqnarray}\label{n1e22}
\|P_{k_1}u_1\|_{L^{\infty,2}_{\mathbf{e}_0}}\lesssim
2^{-k_1/2}\|P_{k_1}u_1\|_{\widetilde{Z}_{k_1}}.
\end{eqnarray}
By H\"older's inequality and \eqref{n1e22} we have
\begin{eqnarray}\label{n1e33}
&&2^{k_{1}+k_{2}+k_{3}}\|\eta_{k,\mathbf{e}}(\xi)(P(\tau,\xi)+i)^{-1}\mathcal{F}_{(2+1)}[P_{k}(\mathcal{R}(P_{k_1}u_1P_{k_2}
u_2)P_{k_3}u_3)]\|_{Z_{k}}\nonumber\\&\leq&
C2^{-\frac{k}{2}}2^{k_1+k_2+k_3}\|P_{k}(\mathcal{R}(P_{k_1}u_1P_{k_2}
u_2)P_{k_3}u_3)\|_{L^{1,2}_{\mathbf{e}}}\nonumber\\
&\leq&C2^{-\frac{k}{2}}2^{k_1+k_2+k_3}\|P_{k_3}u_3\|_{L^{2,\infty}_{\mathbf{e}}}\|P_{k_1}u_1P_{k_2}u_2\|_{L^{2}}\nonumber\\
&\leq&C2^{-\frac{k}{2}}2^{k_1+k_2+k_3}\|P_{k_3}u_3\|_{L^{2,\infty}_{\mathbf{e}}}\|P_{k_1}u_1\|
_{L^{\infty,2}_{\mathbf{e}_0}}\|P_{k_2}u_2\|_{L^{2,\infty}_{\mathbf{e}_0}}\nonumber\\
&\leq&C2^{-\frac{k}{2}+\frac{k_1}{2}+\frac{3k_2}{2}+\frac{3k_3}{2}}(k_2+1)^2(k_3+1)^2\|P_{k_1}u_1\|
_{\widetilde{Z}_{k_1}}\|P_{k_2}u_2\|_{\widetilde{Z}_{k_2}}\|P_{k_3}u_3\|_{\widetilde{Z}_{k_3}}
\end{eqnarray}
Which is enough for \eqref{n1e1}. The proof for $k_2\geq
k_{\max}-20$ is the same by symmetry.

Now, Let $k_{3}= k_{\max}$. In this case $k_3\geq k-3$. Furthermore,
in view of the above argument, we can assume that $k_3\geq
k_{\med}+20$, thus
\begin{eqnarray*}
&&\eta_{k,\mathbf{e}}(\xi)(P(\tau,\xi)+i)^{-1}\mathcal{F}_{(2+1)}[P_{k}(\mathcal{R}(P_{k_1}u_1P_{k_2}
u_2)P_{k_3}u_3)]\\
&=&\eta_{k,\mathbf{e}}(\xi)(P(\tau,\xi)+i)^{-1}\mathcal{F}_{(2+1)}[P_{k}(\mathcal{R}(P_{k_1}u_1P_{k_2}
u_2)\widetilde{P}_{k,\mathbf{e}}P_{k_3}u_3)]
\end{eqnarray*}
where
$\mathcal{F}(\widetilde{P}_{k,\mathbf{e}}f)(\xi,\tau)=\widetilde{\eta}_{k,\mathbf{e}}(\xi)\widehat{f}(\xi,\tau)$,
\begin{eqnarray*}
\widetilde{\eta}_{k,\mathbf{e}}(\xi)=\left \{\begin{array}{ll}
\displaystyle \eta_{[k-1,k+1]}^{(2)}(\xi)\cdot \eta_{[k-10,k+10]}
(\xi\cdot \overline{\mathbf{e}})\text{ if } k\geq 100. ;\\
\eta_{k}^{(2)}(\xi)\hspace{4.0cm}\text{ if } k<100.
\end{array}
\right.
\end{eqnarray*}
and
\begin{eqnarray}\label{n1e222}
\|\widetilde{P}_{k,\mathbf{e}}P_{k_3}u_3)\|_{L^{\infty,2}_{\mathbf{e}}}\lesssim
2^{-k_3/2}\|P_{k_3}u_3\|_{\widetilde{Z}_{k_3}}.
\end{eqnarray}
Thus
\begin{eqnarray*}
&&2^{k_{1}+k_{2}+k_{3}}\|\eta_{k,\mathbf{e}}(\xi)(P(\tau,\xi)+i)^{-1}\mathcal{F}_{(2+1)}
[P_{k}(\mathcal{R}(P_{k_1}u_1P_{k_2}
u_2)P_{k_3}u_3)]\|_{Z_{k}}\nonumber\\
&\leq&
C2^{-\frac{k}{2}}2^{k_{1}+k_{2}+k_{3}}\|P_{k}(\mathcal{R}(P_{k_1}u_1P_{k_2}
u_2)\widetilde{P}_{k,\mathbf{e}}P_{k_3}u_3)\|_{L^{1,2}_{\mathbf{e}}}\nonumber\\
&\leq&C2^{-\frac{k}{2}}2^{k_{1}+k_{2}+k_{3}}\|\widetilde{P}_{k,\mathbf{e}}P_{k_3}u_3\|_{L^{\infty,2}_{\mathbf{e}}}
\|\mathcal{R}(P_{k_1}u_1P_{k_2}
u_2)\|_{L^{1,\infty}_{\mathbf{e}}}\nonumber\\
&\leq&C2^{-\frac{k}{2}}2^{k_{1}+k_{2}+k_{3}}\|\widetilde{P}_{k,\mathbf{e}}P_{k_3}u_3\|_{L^{\infty,2}_{\mathbf{e}}}
\sum_{k\leq k_{\med}+1}\|P_k\mathcal{R}(P_{k_1}u_1P_{k_2}
u_2)\|_{L^{1,\infty}_{\mathbf{e}}}.
\end{eqnarray*}
By $\|P_{k}\mathcal{R}(f)\|_{L^{1,\infty}_{\mathbf{e}}}\leq
C\|f\|_{L^{1,\infty}_{\mathbf{e}}}$, we can continue with
\begin{eqnarray}\label{n1e333}
&&C2^{-\frac{k}{2}}2^{k_{1}+k_{2}+k_{3}}\|\widetilde{P}_{k,\mathbf{e}}P_{k_3}u_3\|_{L^{\infty,2}_{\mathbf{e}}}
(k_{\med}+1)\|P_{k_1}u_1P_{k_2}
u_2\|_{L^{1,\infty}_{\mathbf{e}}}\nonumber\\
&\leq&C2^{-\frac{k}{2}}2^{k_1+k_2+k_3}(k_{\med}+1)\|\widetilde{P}_{k,\mathbf{e}}P_{k_3}u_3\|_{L^{\infty,2}_{\mathbf{e}}}\|P_{k_1}u_1\|
_{L^{2,\infty}_{\mathbf{e}}}\|P_{k_2}u_2\|_{L^{2,\infty}_{\mathbf{e}}}\nonumber\\
&\leq&C2^{-\frac{k}{2}+\frac{k_3}{2}+\frac{3k_2}{2}+\frac{3k_1}{2}}(k_1+1)^3(k_2+1)^3\|P_{k_1}u_1\|
_{\widetilde{Z}_{k_1}}\|P_{k_2}u_2\|_{\widetilde{Z}_{k_2}}\|P_{k_3}u_3\|_{\widetilde{Z}_{k_3}}
\end{eqnarray}
We finish the proof of \eqref{n1e11}.
\end{proof}

\section{Multilinear Estimates}

The purpose of the this section is to estimate the nonlinear term
$$
\mathcal{N}_{0}(u)=\frac{2\overline{u}}{1+u\overline{u}}
$$
with $u\in C(\mathbb{R}: H^{\infty})$. The basic tool to analysis
the $\mathcal{N}_{0}(u)$ term is the algebra property of the
resolution spaces, say Lemma \ref{n2}.  In order to set up Lemma
\ref{n2}, we need the following two simple $L^{2}$ estimates.
\begin{lemma}\label{pl1}
If $k_{1},k_{2},k\in \mathbb{Z}_{+}$, $j_{1},j_{2},j\in
\mathbb{Z}_{+}$, and $g_{k_{1},j_{1}}, g_{k_{2},j_{2}}$ are $L^{2}$
functions supported in $D_{k_{1},j_{1}}$ and $D_{k_{2},j_{2}}$ then
\begin{eqnarray}\label{ple0}
\|1_{D_{k,j}}\cdot (g_{k_{1},j_{1}}*g_{k_{2},j_{2}})\|_{L^{2}}\leq
C2^{\min(k_1,k_2,k)}2^{\min(j_1,j_2,j)/2}\|g_{k_{1},j_{1}}\|_{L^{2}}\|g_{k_{2},j_{2}}\|_{L^{2}}.
\end{eqnarray}
\end{lemma}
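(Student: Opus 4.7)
The plan is to combine a pointwise Cauchy--Schwarz estimate with a duality/symmetry argument, so that all three indices $k_{1},k_{2},k$ (and independently $j_{1},j_{2},j$) can participate in the bound. First, I would fix $(\xi,\tau)\in D_{k,j}$ and apply Cauchy--Schwarz inside the convolution integral to get
\begin{equation*}
|(g_{k_{1},j_{1}}*g_{k_{2},j_{2}})(\xi,\tau)|^{2}\leq |E(\xi,\tau)|\int |g_{k_{1},j_{1}}(\xi',\tau')|^{2}|g_{k_{2},j_{2}}(\xi-\xi',\tau-\tau')|^{2}\,d\xi'd\tau',
\end{equation*}
where $E(\xi,\tau)$ denotes the set of $(\xi',\tau')$ with $(\xi',\tau')\in D_{k_{1},j_{1}}$ and $(\xi-\xi',\tau-\tau')\in D_{k_{2},j_{2}}$. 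Integrating in $(\xi,\tau)$ over $D_{k,j}$ and using Fubini on the right then give
\begin{equation*}
\|1_{D_{k,j}}(g_{k_{1},j_{1}}*g_{k_{2},j_{2}})\|_{L^{2}}^{2}\leq \Big(\sup_{(\xi,\tau)\in D_{k,j}}|E(\xi,\tau)|\Big)\|g_{k_{1},j_{1}}\|_{L^{2}}^{2}\|g_{k_{2},j_{2}}\|_{L^{2}}^{2}.
\end{equation*}

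To control $|E(\xi,\tau)|$, note that the $\xi'$-slice is the intersection of two dyadic annuli in $\R^{2}$ of radii $2^{k_{1}}$ and $2^{k_{2}}$, with area $\les 2^{2\min(k_{1},k_{2})}$; and for each such $\xi'$, the $\tau'$-slice is the intersection of two intervals of lengths $\sim 2^{j_{1}}$ and $\sim 2^{j_{2}}$ coming from the modulation constraints $|P(\tau',\xi')|\le 2^{j_{1}+1}$ and $|P(\tau-\tau',\xi-\xi')|\le 2^{j_{2}+1}$. Since $\partial_{\tau'}P=1$, each interval really has length $\sim 2^{j_{a}}$, so this is the standard intersection-of-annuli/intervals computation, giving $|E|\les 2^{2\min(k_{1},k_{2})}\cdot 2^{\min(j_{1},j_{2})}$. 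This already yields one valid version of the claim.

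To upgrade $\min(k_{1},k_{2})$ to $\min(k_{1},k_{2},k)$ (and similarly for the modulations), I would invoke duality. Writing
\begin{equation*}
\|1_{D_{k,j}}(g_{k_{1},j_{1}}*g_{k_{2},j_{2}})\|_{L^{2}}=\sup_{\|h\|_{L^{2}}\leq 1,\,\mathrm{supp}\,h\subset D_{k,j}}\Big|\int g_{k_{1},j_{1}}(\xi_{1},\tau_{1})g_{k_{2},j_{2}}(\xi_{2},\tau_{2})\overline{h}(\xi_{1}+\xi_{2},\tau_{1}+\tau_{2})\,d\xi_{1}d\tau_{1}d\xi_{2}d\tau_{2}\Big|,
\end{equation*}
I observe that, after reversing sign in the third variable, this trilinear form is symmetric in the three functions $g_{k_{1},j_{1}},g_{k_{2},j_{2}},\overline{h}$ (supported on dyadic blocks of scales $(k_{1},j_{1}),(k_{2},j_{2}),(k,j)$ up to reflection, which does not affect measures). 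Running the same pointwise Cauchy--Schwarz argument with $g_{k_{2},j_{2}}$ or $\overline{h}$ in the role of ``output'' yields two further bounds of the same shape, one for each pair $(a,b)\in\{(1,2),(1,k),(2,k)\}$:
\begin{equation*}
\|1_{D_{k,j}}(g_{k_{1},j_{1}}*g_{k_{2},j_{2}})\|_{L^{2}}^{2}\leq C\cdot 2^{2\min(k_{a},k_{b})}\cdot 2^{\min(j_{a},j_{b})}\|g_{k_{1},j_{1}}\|_{L^{2}}^{2}\|g_{k_{2},j_{2}}\|_{L^{2}}^{2}.
\end{equation*}

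Finally I would select the best pair. Let $a_{0}$ be an index achieving $\min(k_{1},k_{2},k)$ and $b_{0}$ one achieving $\min(j_{1},j_{2},j)$. If $a_{0}=b_{0}$, either of the two pairs containing that index realizes both minima simultaneously; if $a_{0}\ne b_{0}$, the unique pair $(a_{0},b_{0})$ does. In every case at least one of the three available bounds has right-hand side $\leq C\cdot 2^{2\min(k_{1},k_{2},k)}\cdot 2^{\min(j_{1},j_{2},j)}\|g_{k_{1},j_{1}}\|_{L^{2}}^{2}\|g_{k_{2},j_{2}}\|_{L^{2}}^{2}$, and taking square roots gives the claim. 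The main subtlety is this last combinatorial matching--ensuring that some single pair realizes both minima at once--while the individual measure bounds themselves are routine and I would not grind through them.
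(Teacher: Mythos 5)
The paper states Lemma \ref{pl1} without proof, so there is no in-paper argument to compare against; this is a standard $X^{s,b}$-type convolution estimate, and your proof supplies the argument one would expect. Your plan is correct: the pointwise Cauchy--Schwarz step correctly reduces the bound to $\sup_{D_{k,j}}|E(\xi,\tau)|^{1/2}$; the slice-measure computation is sound because each $\xi'$-slice is contained in a $\xi$-ball of radius $\lesssim 2^{k_a}$ for each of the two relevant indices (true for both $k_a\geq 1$ annuli and the $k_a=0$ ball), giving area $\lesssim 2^{2\min(k_a,k_b)}$, and each $\tau'$-slice is an intersection of two intervals of lengths $\sim 2^{j_a}$ and $\sim 2^{j_b}$ since $\partial_{\tau'}P\equiv1$. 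The duality step is also valid: the dual trilinear form $\int g_1(\zeta_1)g_2(\zeta_2)\overline{h}(\zeta_1+\zeta_2)$ becomes, after the substitution $\zeta_3=-\zeta_1-\zeta_2$, a symmetric form whose third input is supported on a reflection of $D_{k,j}$, and reflection does not affect the slice measures; so all three choices of ``output'' index give a bound of the claimed shape. Finally the combinatorial selection is watertight: with three indices and a two-element pair, one can always pick a pair containing both some index realizing $\min_a k_a$ and some index realizing $\min_a j_a$ (either the same index twice, or two distinct indices). So the proposal is correct and is the natural argument the paper implicitly relies on.
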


For any $k,j\in \mathbb{Z}_{+}$, and $f_{k}\in Z_{k}$ we denote
$$
f_{k,\leq j}(\xi,\tau)=f_{k}(\xi,\tau)\cdot \eta_{\leq
j}(P(\tau,\xi))\text{ and }f_{k,\geq
j}(\xi,\tau)=f_{k}(\xi,\tau)\cdot \eta_{\geq j}(P(\tau,\xi)).
$$
We will use the following estimate in this section frequently.
\begin{lemma}\label{pl2}
If $k_{1},k_{2}\in \mathbb{Z}_{+}$, $k_{1}\leq
k_{2}+C$,$j_{1},j_{2}\in \mathbb{Z}_{+}$, and  $f_{k_{1}}\in
Z_{k_{1}}$,  $f_{k_{2}}\in Z_{k_{2}}$ and $\sigma' >1$ then
\begin{eqnarray}\label{ple1}
\|\widetilde{f}_{k_{1},\geq j_{1}}*\widetilde{f}_{k_{2},\geq
j_{2}}\|_{L^{2}}\leq
C(2^{j_{2}/2}2^{(k_{1}+k_{2})/2})^{-1}(2^{\sigma'
k_{1}}\|f_{k_{1}}\|_{Z_{k_{1}}})\|f_{k_{2}}\|_{Z_{k_{2}}}
\end{eqnarray}
where $\mathcal{F}^{-1}(\widetilde{f}_{k_{i},\geq j_{i}})\in
\{\mathcal{F}^{-1}(f_{k_{i},\geq
j_{i}}),\overline{\mathcal{F}^{-1}(f_{k_{i},\geq j_{i}})}\}$,
$i=1,2$.
\end{lemma}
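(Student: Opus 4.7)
The plan is to reduce the inequality to dyadic-in-modulation convolution estimates via the atomic decomposition of Lemma~\ref{lp1}: write $f_{k_i}=\sum_{j_i'\geq 0}g_{j_i'}+\sum_{\ell}f^{\mathbf{e}_\ell}_{k_i}$ with $g_{j_i'}$ supported in $D_{k_i,j_i'}$ and $f^{\mathbf{e}_\ell}_{k_i}\in Y^{\mathbf{e}_\ell}_{k_i}$. Pairings involving a $Y^\mathbf{e}$ factor I would reduce to the $X$-case either via the coarse bound $\|f^\mathbf{e}\|_{X_k}\leq C(k+1)\|f^\mathbf{e}\|_{Y^\mathbf{e}_k}$ from \eqref{lp3b}, or, when sharper control is needed, via the representation \eqref{es2*} of Lemma~\ref{lp4*}: the $X_k$-error is already admissible, and the main term is analyzed by a change of variables in the $\mathbf{e}$-direction whose Jacobian satisfies $|\partial_\tau t^*_\mathbf{e}|\gtrsim 2^{-k}$ by \eqref{pp4}. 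The conjugation symmetry $\widetilde f\in\{f,\overline{f(-\cdot)}\}$ preserves all the relevant norms and the convolution structure, so it produces no additional cases.

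For the core $X_{k_1}\times X_{k_2}$ case, Lemma~\ref{pl1} combined with orthogonal summation over the $O(1+k_1+k_2+j_1+j_2)$ output cells $D_{k,j}$ yields
\[
\|g_{j_1'}*g_{j_2'}\|_{L^2}\lesssim 2^{\min(k_1,k_2)}\,2^{\min(j_1',j_2')/2}\,\|g_{j_1'}\|_{L^2}\|g_{j_2'}\|_{L^2},
\]
and since $k_1\leq k_2+C$ the spatial prefactor is $\lesssim 2^{k_1}$. I would then sum over $j_1'\geq j_1$ and $j_2'\geq j_2$ by splitting into the regimes $j_1'\leq j_2'$ and $j_1'>j_2'$; in each regime the factor $2^{\min/2}$ cancels one of the $X_{k_i}$-weights $2^{j'/2}$, and the residual $2^{-\max/2}$ is summed by Cauchy--Schwarz down to $2^{-j_2/2}$, giving an intermediate bound of size $\lesssim 2^{k_1-j_2/2}(k_1+1)(k_2+1)\|f_{k_1}\|_{Z_{k_1}}\|f_{k_2}\|_{Z_{k_2}}$.

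The main obstacle is to replace the $2^{k_1}$ just obtained with the sharper spatial scaling $2^{-(k_1+k_2)/2}\cdot 2^{\sigma'k_1}$. Writing $F_i=\mathcal{F}_{(2+1)}^{-1}(\widetilde{f}_{k_i,\geq j_i})$, I would supply the missing factor via the H\"older split $\|F_1\cdot F_2\|_{L^2_{x,t}}\leq\|F_1\|_{L^{2,\infty}_\mathbf{e}}\|F_2\|_{L^{\infty,2}_\mathbf{e}}$, after an angular partition of unity activating the cutoff $\chi_{k_2,30}(\xi\cdot\overline{\mathbf{e}})$ on the higher-frequency factor. The maximal bound of Lemma~\ref{lp5} then gives $\|F_1\|_{L^{2,\infty}_\mathbf{e}}\lesssim 2^{k_1/2}(k_1+1)^2\|f_{k_1}\|_{Z_{k_1}}$, and the local-smoothing bound of Lemma~\ref{lp4} gives $\|F_2\|_{L^{\infty,2}_\mathbf{e}}\lesssim 2^{-k_2/2}\|f_{k_2}\|_{Z_{k_2}}$, producing the correct $k$-scaling $2^{(k_1-k_2)/2}$. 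The modulation gain $2^{-j_2/2}$ is then installed by interpolating this $L^{\infty,2}_\mathbf{e}$-bound on $F_2$ with the trivial $L^2_{x,t}$-bound $\|\widetilde f_{k_2,\geq j_2}\|_{L^2}\leq C\,2^{-j_2/2}\|f_{k_2}\|_{X_{k_2}}$ (or, equivalently, by taking a geometric mean of the two dyadic bounds), and the hypothesis $\sigma'>1$ is precisely what absorbs all residual $(k_i+1)^{O(1)}$ logarithmic losses from the $Z_k\to X_k$ conversion and from Lemma~\ref{lp5}.
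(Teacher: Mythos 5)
Your route to the $2^{-(k_1+k_2)/2}$ gain is the same as the paper's: after an angular localization of the high-frequency factor (the paper achieves this via Lemma~\ref{lp3}, you via a partition of unity), estimate $\|F_1F_2\|_{L^2}$ by the H\"older pairing $L^{2,\infty}_{\widehat v}\times L^{\infty,2}_{\widehat v}$ and invoke Lemma~\ref{lp5} on the low-frequency factor and Lemma~\ref{lp4} on the high-frequency one, with $\sigma'>1$ soaking up the polynomial-in-$k_1$ losses. The genuine gap is in the final sentence: ``installing'' the $2^{-j_2/2}$ modulation gain on top of the $2^{(k_1-k_2)/2}$ gain by interpolating the $L^{\infty,2}_\e$ bound against the trivial $L^2$ bound, or equivalently by taking a geometric mean, does not work. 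Both estimates bound the same scalar $\|\widetilde f_{k_1,\geq j_1}\ast\widetilde f_{k_2,\geq j_2}\|_{L^2}$; there is no $\theta$-interpolation to run, and the geometric mean of the two dyadic sizes, $\sqrt{2^{(k_1-k_2)/2}\cdot 2^{k_1-j_2/2}}=2^{3k_1/4-k_2/4-j_2/4}$, is \emph{larger} than $2^{-(k_1+k_2)/2-j_2/2}$ once $k_2$ or $j_2$ is large, even after multiplying by $2^{\sigma'k_1}$. What the paper actually does is a case split on $j_2$ versus $k_1+k_2$: when $j_2\le k_1+k_2$ the $L^{2,\infty}\times L^{\infty,2}$ pairing alone supplies $2^{-(k_1+k_2)/2}$, which already dominates $\min(2^{-j_2/2},2^{-(k_1+k_2)/2})$; when $j_2>k_1+k_2$ the crude $L^\infty\times L^2$ pairing together with Lemma~\ref{lp6} supplies $2^{k_1}2^{-j_2/2}$, which is the $\min$ in that regime. (Note this is the bound the proof actually delivers and the only one the paper ever uses, in Lemma~\ref{n2} with $j_1=j_2=0$; neither route produces the product of both dyadic gains.)

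Two secondary points. First, your opening paragraph --- the $X\times X$ core via Lemma~\ref{pl1} with double summation over $j_1',j_2'$ and the reduction of $Y^\e$ factors to $X$ via $\|\cdot\|_{X_k}\le C(k+1)\|\cdot\|_{Z_k}$ --- can only ever recover $2^{k_1}$, never $2^{-(k_1+k_2)/2}$, and the coarse $Y^\e\to X$ conversion destroys exactly the $2^{-k_2/2}$ smoothing you need from Lemma~\ref{lp4}, which is designed to act on the full $Z_k$ norm; that whole reduction should simply be dropped. Second, the angular-sector argument presupposes $k_2\ge 100$ (since $Y^\e_k=\{0\}$ for small $k$); the case $k_2\le 100$ has to be dispatched separately and trivially, via $\|F_1\|_{L^\infty}\|F_2\|_{L^2}\lesssim 2^{k_1}2^{-j_2/2}\|f_{k_1}\|_{Z_{k_1}}\|f_{k_2}\|_{Z_{k_2}}$, which is what the paper does first.
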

\begin{proof}[\textbf{Proof of Lemma \ref{pl2}}]
If $k_{2}\leq 100$, By Lemma \ref{lp3} and Lemma \ref{lp6},
\begin{eqnarray*}
\|\widetilde{f}_{k_{1},\geq j_{1}}*\widetilde{f}_{k_{2},\geq
j_{2}}\|_{L^{2}}&\leq& C\|\mathcal{F}^{-1}(f_{k_{1},\geq
j_{1}})\|_{L^{\infty}}\|\mathcal{F}^{-1}(f_{k_{2},\geq
j_{2}})\|_{L^{2}}\\
&\leq&
C2^{k_{1}}\|f_{k_{1}}\|_{Z_{k_{1}}}2^{-j_{2}/2}\|f_{k_{2}}\|_{Z_{k_{2}}}.
\end{eqnarray*}
This is enough for \eqref{ple1}.

If $k_{2}\geq 100$, in view of Lemma \ref{lp3}, we can assume that:
$f_{k_{2}}$ is supported in $\{(\xi_{2},\tau_{2}): |\xi_{2}-v|\leq
2^{k_{2}-50}\}$ for some $v\in I_{k_{2}}^{(2)}$. Let
$\widehat{v}=\overline{v}/|v|$, then when $k_{1}+k_{2}\geq j_{2}$,
we use Lemma \ref{lp4}, Lemma \ref{lp5} and Lemma \ref{lp3} to get
\begin{eqnarray*}
\|\widetilde{f}_{k_{1},\geq j_{1}}*\widetilde{f}_{k_{2},\geq
j_{2}}\|_{L^{2}}&\leq& C\|\mathcal{F}^{-1}(f_{k_{1},\geq
j_{1}})\|_{L^{2,\infty}_{\widehat{v}}}\|\mathcal{F}^{-1}(f_{k_{2},\geq
j_{2}})\|_{L^{\infty,2}_{\widehat{v}}}\\
&\leq& C2^{k_{1}/2}(k_{1}+1)^2\|f_{k_{1},\geq
j_{1}}\|_{Z_{k_{1}}}2^{-k_{2}/2}\|f_{k_{2},\geq
j_{2}}\|_{Z_{k_{2}}}\\
&\leq& C2^{-(k_{1}+k_{2})/2}2^{\sigma'
k_{1}}\|f_{k_{1}}\|_{Z_{k_{1}}}\|f_{k_{2}}\|_{Z_{k_{2}}}.
\end{eqnarray*}
When $k_{1}+k_{2}\leq j_{2}$, we use the definition and Lemma
\ref{lp6} to get
\begin{eqnarray*}
\|\widetilde{f}_{k_{1},\geq j_{1}}*\widetilde{f}_{k_{2},\geq
j_{2}}\|_{L^{2}}&\leq& C\|\mathcal{F}^{-1}(f_{k_{1},\geq
j_{1}})\|_{L^{\infty}}\|\mathcal{F}^{-1}(f_{k_{2},\geq
j_{2}})\|_{L^{2}}\\
&\leq& C2^{k_{1}}\|f_{k_{1},\geq
j_{1}}\|_{Z_{k_{1}}}2^{-j_{2}/2}\|f_{k_{2},\geq
j_{2}}\|_{Z_{k_{2}}}\\
&\leq&
C2^{-j_{2}/2}2^{k_{1}}\|f_{k_{1}}\|_{Z_{k_{1}}}\|f_{k_{2}}\|_{Z_{k_{2}}}
\end{eqnarray*}
Thus we finish the proof.
\end{proof}

\begin{lemma}\label{n2}
Assume $u,v\in F^{\sigma}+\overline{F}^{\sigma}$, then for
$\sigma>1$ we have
\begin{eqnarray}\label{n2e0}
\|u\cdot v\|_{F^{\sigma}+\overline{F}^{\sigma}}\leq
C\|u\|_{F^{\sigma}+\overline{F}^{\sigma}}\|v\|_{F^{\sigma}+\overline{F}^{\sigma}}
\end{eqnarray}
\end{lemma}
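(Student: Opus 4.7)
The norm $\|\cdot\|_{F^\sigma+\overline F^\sigma}$ is invariant under $u\mapsto\bar u$, so after conjugating one or both factors as needed, it suffices to treat $u,v\in F^\sigma$; the other three combinations are obtained by reflecting Fourier supports. The plan is a standard dyadic bilinear reduction followed by a case analysis on the output modulation, with the null identity \eqref{null0} playing the essential role for the low-modulation output.

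\textbf{Step 1 (dyadic reduction).} Decompose $u=\sum_{k_1}P_{k_1}u$, $v=\sum_{k_2}P_{k_2}v$ and expand $P_k(uv)=\sum_{k_1,k_2}P_k(P_{k_1}u\cdot P_{k_2}v)$. Support considerations isolate the usual two regimes: the high-low case $k\in[k_{\max}-3,k_{\max}+3]$ with $k_{\min}\le k_{\max}-4$, and the high-high case $k_{\min}\ge k_{\max}-3$ with $k\le k_{\max}+3$. A Cauchy-Schwarz / Schur argument in the parameters $(k,k_1,k_2)$, exploiting $\sigma>1$, reduces the lemma to a dyadic bilinear estimate of the form
\begin{equation*}
\bigl\|P_k(P_{k_1}u\cdot P_{k_2}v)\bigr\|_{Z_k}\ \lesssim\ 2^{-\delta|k_1-k_2|}\,2^{-\delta(k_{\max}-k)}\,\|P_{k_1}u\|_{Z_{k_1}}\|P_{k_2}v\|_{Z_{k_2}}
\end{equation*}
for some $\delta>0$ (with room to spare provided $\sigma>1$).

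\textbf{Step 2 (splitting inputs and output).} Apply Lemma \ref{lp1} to each input to write $P_{k_i}u$ (resp.\ $P_{k_2}v$) as a sum of $X_{k_i}$-pieces indexed by modulation $j_i$, plus finitely many $Y^{\e_l}_{k_i}$-pieces. Split the output by its modulation: on the region $|P(\tau,\xi)|\gtrsim 2^{2k_{\max}}$ use Lemma \ref{lp3} to measure the output in $X_k$, reducing to a weighted $L^2$-convolution bound that is handled by Lemmas \ref{pl1}-\ref{pl2}; the high-modulation pieces of the inputs are trivially in $X_{k_i}$ (again by Lemma \ref{lp3}), and the low-modulation $Y_{k_i}^{\e_l}$-pieces contribute via the $L^{2,\infty}_\e$/$L^{\infty,2}_\e$ pairing provided by Lemmas \ref{lp4}-\ref{lp5} as in the proof of Lemma \ref{pl2}. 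On the complementary region $|P(\tau,\xi)|\lesssim 2^{2k_{\max}}$, choose a direction $\e\in\{\e_1,\dots,\e_L\}$ with $|\xi\cdot\overline\e|\sim 2^k$ throughout the relevant output support and attempt to place the output in $Y^\e_k$.

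\textbf{Step 3 (low-modulation output via the null identity).} To bound the $Y^\e_k$ norm we must estimate
\begin{equation*}
2^{-k/2}\bigl\|\mathcal{F}^{-1}_{(2+1)}\bigl[(P(\tau,\xi)+i)\,\mathcal{F}(P_k(P_{k_1}u\cdot P_{k_2}v))\bigr]\bigr\|_{L^{1,2}_\e}.
\end{equation*}
Apply the null identity \eqref{null0} in the form
\begin{equation*}
(H+i)(P_{k_1}u\cdot P_{k_2}v)\ =\ ((H+i)P_{k_1}u)\,P_{k_2}v\ +\ P_{k_1}u\,(HP_{k_2}v)\ +\ 2(\partial_{x_1}P_{k_1}u\,\partial_{x_1}P_{k_2}v-\partial_{x_2}P_{k_1}u\,\partial_{x_2}P_{k_2}v),
\end{equation*}
so the left-hand side decomposes into three product terms. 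In each product the factor inheriting the $(H+i)$-weight (or, in the null-form piece, one of the spatial derivatives) is placed in $L^{\infty,2}_\e$: this is either immediate from the defining norm of the relevant $Y^{\e'}_{k_i}$-piece or is supplied by the local-smoothing Lemma \ref{lp4} applied to its $X_{k_i}$-piece (here the angular cutoff $\chi_{k,30}(\xi\cdot\overline\e)$ is available thanks to the angular localization performed when choosing $\e$). The other factor is placed in $L^{2,\infty}_\e$ via the maximal Lemma \ref{lp5}. The H\"older embedding $L^{\infty,2}_\e\cdot L^{2,\infty}_\e\hookrightarrow L^{1,2}_\e$ closes the bound, with the $2^{-k/2}$ from Lemma \ref{lp4} exactly matching the $2^{-k/2}$ weight in the definition of $Y^\e_k$; the derivative losses $2^{k_1+k_2}$ from the null form are absorbed by the $2^{-k/2}\cdot 2^{-k_{\max}/2}$ produced by Lemmas \ref{lp4}-\ref{lp5}, leaving the surplus $2^{-\delta|k_1-k_2|}$ needed in Step 1.

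\textbf{Main obstacle.} The crux is precisely the low-modulation output. Because of Remark \ref{re1} the na\"ive bound $\|\mathcal{F}^{-1}f\|_{L^{\infty,2}_\e}\lesssim (k+1)\|f\|_{Z_k}$ used by \cite{IK} in the elliptic setting fails in our hyperbolic setting, so we cannot put an entire factor directly into $L^{\infty,2}_\e$. The null identity \eqref{null0} is the substitute: it trades the dangerous $(P+i)$-weight for a product of first-order spatial derivatives, which can be distributed over the two factors and then controlled by the $L^{2,\infty}_\e\cdot L^{\infty,2}_\e$ pairing. The principal technical effort is bookkeeping, ensuring that in every split the factor entering $L^{\infty,2}_\e$ carries the required angular cutoff $\chi_{k,30}(\xi\cdot\overline\e)$ so Lemma \ref{lp4} is applicable, and that the surplus geometric factor $2^{-\delta|k_1-k_2|}$ needed in Step 1 is produced by the lower-frequency factor's $L^{2,\infty}_\e$ norm via Lemma \ref{lp5}.
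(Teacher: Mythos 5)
Your Step 3 hinges on a H\"older embedding that does not hold. You claim $L^{\infty,2}_{\mathbf{e}}\cdot L^{2,\infty}_{\mathbf{e}}\hookrightarrow L^{1,2}_{\mathbf{e}}$, but H\"older in the anisotropic mixed norms gives $L^{\infty,2}_{\mathbf{e}}\cdot L^{2,\infty}_{\mathbf{e}}\hookrightarrow L^{2,2}_{\mathbf{e}}=L^{2}$: the exponents add as $1/\infty+1/2=1/2$ in the first slot and $1/2+1/\infty=1/2$ in the second. To land in $L^{1,2}_{\mathbf{e}}$ one needs, e.g., $L^{1,2}_{\mathbf{e}}\cdot L^{\infty,\infty}$ (which is what the paper uses in Case 3: the $Y^{\mathbf{e}}_{k_2}$-piece with the $(H-i)$-weight supplies $L^{1,2}_{\mathbf{e}}$, and the low-frequency factor is placed in $L^{\infty}_{t,x}$ via Lemma \ref{lp6}), or $L^{2,\infty}_{\mathbf{e}}\cdot L^{2,2}_{\mathbf{e}}$ (the trilinear pairing of Lemma \ref{n1}, where a third factor absorbs one of the $L^{2}$s). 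Relatedly, you assert the $(H+i)$-weighted factor can be placed in $L^{\infty,2}_{\mathbf{e}}$ ``immediate from the defining norm of the relevant $Y^{\mathbf{e}'}_{k_i}$-piece,'' but the $Y^{\mathbf{e}}_k$ norm is defined with $L^{1,2}_{\mathbf{e}}$, not $L^{\infty,2}_{\mathbf{e}}$; $L^{\infty,2}_{\mathbf{e}}$ is what Lemma \ref{lp4} gives for $Z_k$-functions, without the $(P+i)$-weight.

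There is a second, independent gap: you do not address the interaction where the low-frequency factor carries \emph{high} modulation $j_1\gtrsim k_1+k_2$ while the high-frequency factor is a $Y^{\mathbf{e}}_{k_2}$-piece with \emph{low} modulation (Case~4 in the paper). There the null identity by itself does not close, because after splitting the output modulation you are left with the genuinely bilinear $L^{2}$ bound on $f_{k_2,\leq j_1-10}\ast\widetilde{g}_{k_1,j_1}$; the paper handles this with the explicit representation formula of Lemma \ref{lp4*}, a change of variables using $|\partial_\tau t^*_{\mathbf{e}}|\gtrsim 2^{-k_2}$, and the boundedness of the Hilbert transform, in a duality argument. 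That is arguably the most delicate step of the proof of this lemma, and your proposal has nothing in its place. Finally, a smaller point: the correct modulation threshold for the low/high split in the $k_1\ll k_2$ regime is $2^{k_1+k_2}$, not $2^{2k_{\max}}$; with your threshold the $L^{2}$-convolution bound of Lemma \ref{pl2} on the high-modulation side loses a factor $2^{(k_2-k_1)/2}$ you cannot recover. (In fact the paper does the opposite of your split: the \emph{low}-modulation output is controlled directly in $X_k$ by the $L^2$ convolution bound, while the null identity is deployed for the \emph{high}-modulation output, where the $(P+i)$-weight would otherwise ruin the estimate.)
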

\begin{remark}
If here we define $F^\sigma$ by $X_k$ instead of $Z_k$, then the
bilinear estimate \eqref{n2e0} was already proved in \cite{O}.
\end{remark}

\begin{proof}[\textbf{Proof of Lemma \ref{n2}}]
Let $f_{k}\in \{\eta_{k}^{(2)}(\xi)\cdot \mathcal{F}(u),
\eta_{k}^{(2)}(\xi)\cdot \mathcal{F}(\overline{u})\}$, and $g_{k}\in
\{\eta_{k}^{(2)}(\xi)\cdot \mathcal{F}(v), \eta_{k}^{(2)}(\xi)\cdot
\mathcal{F}(\overline{v})\}$. It suffices to show that for any
$k_{1},k_{2}\in \mathbb{Z}_{+}$, $\sigma >1$,
\begin{eqnarray}\label{n2e1}
&&\sum_{k\in\mathbb{Z}_{+}}2^{2\sigma
k}\Big(\sum_{k_1,k_2\in\mathbb{Z}_{+}}\|\eta_{k}^{(2)}(\xi)\cdot
(f_{k_{1}}* g_{k_{2}})\|_{Z_{k}}\Big)^{2}\nonumber\\
&\leq& C\Big(\sum_{k_1\in\mathbb{Z}_{+}}2^{2\sigma
k_{1}}\|f_{k_{1}}\|^{2}_{Z_{k_{1}}}\Big)\Big(\sum_{k_2\in\mathbb{Z}_{+}}2^{2\sigma
k_{2}}\|g_{k_{2}}\|^{2}_{Z_{k_{2}}}\Big).
\end{eqnarray}
Furthermore, we need to show that, if $k_{1},k_{2},k\in
\mathbb{Z}_{+}$, $k_{1}\leq k_{2}+10$, $f_{k_1}\in Z_{k_1}$ and
$f_{k_{2}}\in Z_{k_2}$, then
\begin{eqnarray}\label{n2e2}
2^{\sigma k}\|\eta_{k}^{(2)}(\xi)\cdot (\widetilde{f}_{k_{1}}*
f_{k_{2}})\|_{Z_{k}}\leq C2^{-|k_{2}-k|/4}(2^{\sigma'
k_{1}}\|f_{k_{1}}\|_{Z_{k_{1}}})(2^{\sigma
k_{2}}\|f_{k_{2}}\|_{Z_{k_{2}}}),
\end{eqnarray}
where $\mathcal{F}^{-1}(\widetilde{f}_{k_{1}})\in
\{\mathcal{F}^{-1}(f_{k_{1}}),\overline{\mathcal{F}^{-1}(f_{k_{1}})}\}$,
and $1<\sigma'<\sigma$.

We may assume $k\leq k_{2}+20$. If $k_{2}\leq 99$, the bound
\eqref{n2e2} follows easily from Lemma \ref{pl1} (also see the Case
1 below). We only consider the case $k_{2}\geq 100$. In view of
Lemma \ref{lp3}, we may assume that
$$
f_{k_{2}} \text{ is supported in }I_{k_{2}}^{(2)}\times
\mathbb{R}\cap \{(\xi_{2},\tau_{2}): |\xi_{2}-v|\leq 2^{k_{2}-50}\}
\text{for some } v\in I^{(2)}_{k_{2}}.
$$
With $v$ as above, let $\widehat{v}=\overline{v}/|v|\in
\mathbb{S}^{1}$ and
$$
\widetilde{K}=k_{1}+k_{2}+100
$$
By Lemma \ref{pl2} with $j_{1}=j_{2}=0$, we obtain
\begin{eqnarray*}
&&2^{\sigma k}\|\eta_{\leq
\widetilde{K}-1}(P(\tau,\xi))\eta_{k}^{(2)}(\xi)\cdot
(\widetilde{f}_{k_{1}}* f_{k_{2}})\|_{Z_{k}}\\
&\leq&2^{\sigma k}2^{\widetilde{K}/2}\|\widetilde{f}_{k_{1}}* f_{k_{2}}\|_{L^{2}}\\
&\leq&2^{\sigma k}(2^{\sigma'
k_{1}}\|f_{k_{1}}\|_{Z_{k_{1}}})\cdot\|f_{k_{2}}\|_{Z_{k_{2}}}
\end{eqnarray*}

So for \eqref{n2e2}, it remains to estimate
\begin{eqnarray}\label{n2e3}
&&2^{\sigma k}\|\eta_{\geq
\widetilde{K}}(P(\tau,\xi))\eta_{k}^{(2)}(\xi)\cdot
(\widetilde{f}_{k_{1}}* f_{k_{2}})\|_{Z_{k}}\nonumber\\
&\leq& C2^{-|k_{2}-k|/4}(2^{\sigma'
k_{1}}\|f_{k_{1}}\|_{Z_{k_{1}}})(2^{\sigma
k_{2}}\|f_{k_{2}}\|_{Z_{k_{2}}})
\end{eqnarray}
where  $1<\sigma'<\sigma$. By Lemma \ref{lp1}, we need to analyze
several cases.

\textbf{Case 1.} $f_{k_{2}}=g_{k_{2},j_{2}}\in X_{k_{2}}$,
$f_{k_{1}}\in Z_{k_{1}}$, let $g_{k_{1},j_{1}}=
\eta_{j_1}(P(\tau,\xi))f_{k_1}$. By the definition of $Z_k$ and
Lemma \ref{pl1}, we get
\begin{eqnarray}\label{n2e4}
&&2^{\sigma k}\|\eta_{\geq
\widetilde{K}}(P(\tau,\xi))\eta_{k}^{(2)}(\xi)\cdot
(\widetilde{g}_{k_{1},j_{1}}* f_{k_{2}})\|_{Z_{k}}\nonumber\\
&\leq&C2^{\sigma k}2^{\max(j_{1}+j_{2})/2}\sup_{j\leq
\max(j_{1},j_{2})+C}
\|1_{D_{k,j}}\cdot(\widetilde{g}_{k_{1},j_{1}}* g_{k_{2},j_{2}})\|_{L^{2}}\nonumber\\
&\leq&C2^{\sigma k}2^{\max(j_{1}+j_{2})/2}2^{
k_{1}}2^{\min(j_{1}+j_{2})/2} \|g_{k_{1},j_{1}}\|_{L^{2}}\|
g_{k_{2},j_{2}}\|_{L^{2}}.
\end{eqnarray}

\textbf{Case 2.} $f_{k_{2}}\in Y^{\mathbf{e}_{l}}_{k_{2}}$,
$f_{k_{1}}\in Z_{k_{1}}$, and $k_{2}\leq k_{1}+C$, so
$|k_{1}-k_{2}|\leq C$. By case 1, and Lemma \ref{lp3}, we can assume
that $f_{k_{2}}$ is supported in the set $\{(\xi_{2},\tau_{2}):
|P(\xi_{2},\tau_{2})|\leq 2^{\widetilde{K}-100}\}$. Thus $j_{1}\geq
\widetilde{K}-10$ (unless the left hand-side of \eqref{n2e3}
vanish). Let $g_{k_{1},j_{1}}= \eta_{j_1}(P(\tau,\xi))f_{k_1}$, we
have
\begin{eqnarray}\label{n2e5}
&&2^{\sigma k}\|\eta_{\geq
\widetilde{K}}(P(\tau,\xi))\eta_{k}^{(2)}(\xi)\cdot
(\widetilde{g}_{k_{1},j_{1}}* f_{k_{2}})\|_{Z_{k}}\nonumber\\
&\leq&C2^{\sigma k}2^{j_{1}/2}
\|\widetilde{g}_{k_{1},j_{1}}* f_{k_{2}}\|_{L^{2}}\nonumber\\
&\leq&C2^{\sigma k}2^{j_{1}/2}
\|g_{k_{1},j_{1}}\|_{L^{2}}\|\mathcal{F}^{-1}
(f_{k_{2}})\|_{L^{\infty}},
\end{eqnarray}
which is suffices for \eqref{n2e3} by Lemma \ref{lp6}.

\textbf{Case 3.} $f_{k_{2}}\in Y^{\mathbf{e}_{l}}_{k_{2}}$,
$f_{k_{1}}=f_{k_{1}}\cdot\eta_{\leq \widetilde{K}-1}(P(\tau,\xi))\in
Z_{k_{1}}$, $k_{1}\leq k_{2}-10$, so $|k-k_2|\leq 2$. It suffice to
prove
\begin{eqnarray}\label{n2c5} 2^{\sigma
k}\|\eta_{\geq \widetilde{K}}(P(\tau,\xi))\eta_{k}^{(2)}(\xi)\cdot
(\widetilde{f}_{k_{1}}* f_{k_{2}})\|_{Z_{k}}\nonumber\\
\leq C(2^{\sigma' k_{1}}\|f_{k_{1}}\|_{Z_{k_{1}}})(2^{\sigma
k_{2}}\|f_{k_{2}}\|_{Y^\mathbf{e}_{k_{2}}})
\end{eqnarray}
First notice that
$$
\widetilde{f}_{k_{1}}* f_{k_{2}} \text{ is supported in the set }
\{(\xi,\tau); \xi\cdot \overline{\mathbf{e}}_{l}\in
[2^{k-2},2^{k+2}] \}
$$
and the following identity
\begin{eqnarray}\label{null}
&&-\mathcal{F}^{-1}[(P(\tau,\xi)+i)\cdot(\widetilde{f}_{k_{1}}*
f_{k_{2}})]\nonumber\\
&=&(i\partial_{t}+\square-i)\mathcal{F}^{-1}(f_{k_{2}})\cdot
\mathcal{F}^{-1}(\widetilde{f}_{k_{1}})\nonumber\\
&&+2\nabla\mathcal{F}^{-1}(\widetilde{f}_{k_{1}})\cdot\widetilde{\nabla}
\mathcal{F}^{-1}(f_{k_{2}})\nonumber\\
&&+\mathcal{F}^{-1}(f_{k_{2}})\cdot(i\partial_{t}+\square)
\mathcal{F}^{-1}(\widetilde{f}_{k_{1}})
\end{eqnarray}
where $\widetilde{\nabla}=(\partial_{x_{1}},-\partial_{x_{2}})$, we
have
\begin{eqnarray}\label{n2e6}
&&2^{\sigma k}\|\eta_{\geq
\widetilde{K}}(P(\tau,\xi))\eta_{k}^{(2)}(\xi)\cdot
(\widetilde{f}_{k_{1}}* f_{k_{2}})\|_{Z_{k}}\nonumber\\
&\leq&C 2^{\sigma
k}2^{-k/2}\|(i\partial_{t}+\square-i)\mathcal{F}^{-1}(f_{k_{2}})\cdot
\mathcal{F}^{-1}(\widetilde{f}_{k_{1}})\|_{L^{1,2}_{\mathbf{e}_{l}}}\nonumber\\
&&+C 2^{\sigma
k}2^{-\widetilde{K}/2}\|\nabla\mathcal{F}^{-1}(\widetilde{f}_{k_{1}})\cdot\widetilde{\nabla}
\mathcal{F}^{-1}(f_{k_{2}})\|_{L^{2}}\nonumber\\
&&+C 2^{\sigma
k}2^{-\widetilde{K}/2}\|\mathcal{F}^{-1}(f_{k_{2}})\cdot(i\partial_{t}+\square)
\mathcal{F}^{-1}(\widetilde{f}_{k_{1}})\|_{L^{2}}.
\end{eqnarray}
The first term in the right-hand side of \eqref{n2e6} can be
controlled by
$$
C 2^{\sigma
k}2^{-k/2}\|(i\partial_{t}+\square-i)\mathcal{F}^{-1}(f_{k_{2}})\|_{L^{1,2}_{\mathbf{e}_{l}}}\cdot\|
\mathcal{F}^{-1}(\widetilde{f}_{k_{1}})\|_{L^{\infty}},
$$
which is enough for \eqref{n2c5} in view of Lemma \ref{lp6}. The
second and the third terms are bounded by
$$
C 2^{\sigma
k}2^{-\widetilde{K}/2}2^{\widetilde{K}}\|\mathcal{F}^{-1}(\widetilde{f}_{k_{1}})\cdot
\mathcal{F}^{-1}(f_{k_{2}})\|_{L^{2}},
$$
Which is enough for \eqref{n2c5} by Lemma \ref{pl2}.

\textbf{Case 4.} $f_{k_{2}}\in Y^{\mathbf{e}_{l}}_{k_{2}}$,
$f_{k_{1}}\in Z_{k_{1}}$, $k_{1}\leq k_{2}-10$ and $j_{1}\geq
\widetilde{K}$. we denote $g_{k_{1},j_{1}}=
\eta_{j_1}(P(\tau,\xi))f_{k_1}$, for \eqref{n2e3}, it suffices to
prove
\begin{eqnarray}\label{n2c6} 2^{\sigma
k}\|\eta_{\geq \widetilde{K}}(P(\tau,\xi))\eta_{k}^{(2)}(\xi)\cdot
(\widetilde{g}_{k_{1},j_{1}}* f_{k_{2}})\|_{Z_{k}} \leq C2^{
k_{1}}2^{j_1/2}\|g_{k_{1},j_{1}}\|_{L^{2}}(2^{\sigma
k_{2}}\|f_{k_{2}}\|_{Y^\mathbf{e}_{k_{2}}})
\end{eqnarray}
By Lemma \ref{lp1}, we decompose
\begin{equation*}
f_{k_2}=f_{k_2,\leq j_1-10}+f_{k_2,\geq j_1+10}+X_{k_2}.
\end{equation*}
In view of Case 1, for \eqref{n2c6}, it suffices to prove that
\begin{equation}\label{bt95}
\begin{split}
&2^{\sigma
k}\|\eta_{k,\mathbf{e}}^{(2)}(\xi)\cdot (\widetilde{g}_{k_1,j_1}\ast f_{k_2,\leq j_1-10})\|_{Z_k}\\
&+2^{\sigma k}\|\eta_{\geq j_1}(P(\tau,\xi))\cdot
\eta_{k,\mathbf{e}}^{(2)}(\xi)\cdot
(\widetilde{g}_{k_1,j_1}\ast f_{k_2,\geq  j_1+10})\|_{Z_k}\\
&\leq C(2^{ k_1}2^{j_1/2}\|g_{k_1,j_1}\|_{L^2})\cdot (2^{\sigma
k_2}\|f_{k_2}\|_{Y^\mathbf{e}_{k_2}}).
\end{split}
\end{equation}
For the first term in \eqref{bt95}, it suffices to prove
\begin{eqnarray}\label{n2e8}
2^{\sigma k}2^{j_{1}/2}\|f_{k_{2},\leq
j_{1}-10}*\widetilde{g}_{k_{1},j_{1}}\|_{L^{2}}\leq
C(2^{k_{1}}2^{j_{1}/2}\|g_{k_{1},j_{1}}\|_{L^{2}})\cdot(2^{\sigma
k_{2}}\|f_{k_{2}}\|_{Y_{k_{2}}^{\e^{l}}}).
\end{eqnarray}
By Lemma \ref{lp4*}, we can assume that
\begin{eqnarray*}
f_{k_{2},\leq j_{1}-10}(\xi,\tau)=2^{-k_2/2}\frac{\eta_{\leq
k_2-90}(\xi_{1}^\mathbf{e}-t^*_\mathbf{e})\chi_{k_2,5}(M^*_\e)}{\xi_{1}^\mathbf{e}-t^*_\mathbf{e}+i/2^{k_2}}
h(\xi_{2}^\mathbf{e},\tau),
\end{eqnarray*}
where $\|h\|_{L^{2}}\leq C\|f_{k_{2},\leq
j_{1}-10}\|_{Y_{k_2}^\mathbf{e}}$. For \eqref{n2e8}, it suffices to
prove
\begin{eqnarray}\label{n2e9}
\|f_{k_{2},\leq j_{1}-10}*\widetilde{g}_{k_{1},j_{1}}\|_{L^{2}}\leq
C2^{k_{1}}\|g_{k_{1},j_{1}}\|_{L^{2}}\cdot\|h\|_{L^{2}}.
\end{eqnarray}
We estimate the $L^2$ norm in the left-hand side of \eqref{n2e9} by
duality. The left-hand side of \eqref{n2e9} is bounded by
\begin{eqnarray}\label{DU}
I&=&2^{-k_2/2}\sup_{ \|a\|_{L^2}=1}\Big|\int_{\mathbb{R}^6}
\widetilde{g}_{k_1,j_1}(\eta_1^\e \e+\eta_2^\e \mathbf{e}^{\perp},\beta)\cdot h(\xi_2^\e,\tau)\nonumber\\
&&\times\frac{\eta_{\leq
k_2-90}(\xi_{1}^\mathbf{e}-t^*_\mathbf{e})\chi_{k_2,5}(M^*_\e(\tau,\xi_{2}^\mathbf{e}))}{\xi_{1}^\mathbf{e}-t^*_\mathbf{e}+i/2^{k_2}}\nonumber\\
&&\times
a(\xi_1^\e+\eta_1^\e,\xi_2^\e+\eta_2^\e,\tau+\beta)\,d\xi_1^\e
d\eta_1^\e d\xi_2^\e d\eta_2^\e d\tau
d\beta\Big|\nonumber\\
&=&2^{-k_2/2}\sup_{ \|a\|_{L^2}=1}\Big|\int_{\mathbb{R}^5}
\widetilde{g}_{k_1,j_1}(\eta_1^\e \e+\eta_2^\e \mathbf{e}^{\perp},\beta)\cdot h(\xi_2^\e,\tau)\\
&&\widetilde{a}(\eta_1^\e+t^*_\mathbf{e},\xi_2^\e+\eta_2^\e,\tau+\beta)d\eta_1^\e
d\xi_2^\e d\eta_2^\e d\tau d\beta\Big|.\nonumber
\end{eqnarray}
Here
$$\widetilde{a}(\eta_1^\e,\eta_2^\e,\beta)=\int_{\R}\frac{\eta_{\leq
k_2-90}(\xi_{1}^\mathbf{e})\chi_{k_2,5}(M^*_\e(\tau,\xi_{2}^\mathbf{e}))}
{\xi_{1}^\mathbf{e}+i/2^{k_2}}\cdot
a(\xi_1^\e+\eta_1^\e,\eta_2^\e,\beta)\,d\xi_1^\e.
$$
The boundedness of Hilbert transform gives
$\|\widetilde{a}(\eta_1^\e,\eta_2^\e,\beta)\|_{L^{2}_{\eta_1^\e}}\leq
C\|a(\eta_1^\e,\eta_2^\e,\beta)\|_{L^{2}_{\eta_1^\e}}$. By using
H\"{o}lder's ine\-qua\-li\-ty in the variables
$(\xi_2^\e,\tau,\beta)$, we get
\begin{eqnarray}\label{la1}
I&\leq&C2^{-k_2/2}\sup_{ \|a\|_{L^2}=1}\int_{\mathbb{R}^2}
\|\widetilde{g}_{k_1,j_1}(\eta_1^\e \e+\eta_2^\e
\mathbf{e}^{\perp},\beta)\|_{L^2_\beta}\cdot
\|h(\xi_2^\e,\tau)\|_{L^{2}_{\xi_2^\e,\tau}}\nonumber\\
&&\|\widetilde{a}(\eta_1^\e+t^*_\mathbf{e},\xi_2^\e,\beta)\|
_{L^2_{\xi_2^\e,\tau,\beta}}d\eta_1^\e d\eta_2^\e.
\end{eqnarray}
Here $t^*_\mathbf{e}$ is the same as Lemma \ref{lp4*}, so we have $
|\partial_{\tau}t^*_\mathbf{e}|\geq c2^{-k_2} $. Then by change of
variables, we have
\begin{eqnarray*}
I&\leq&C2^{-k_2/2}2^{k_2/2}\int_{\mathbb{R}^2}
\|\widetilde{g}_{k_1,j_1}(\eta_1^\e \e+\eta_2^\e
\mathbf{e}^{\perp},\beta)\|_{L^2_\beta} d\eta_1^\e d\eta_2^\e\cdot
\|h(\xi_2^\e,\tau)\|_{L^{2}_{\xi_2^\e,\tau}}
\end{eqnarray*}
which is sufficient for \eqref{n2e9}.

From \eqref{null}, we can control the second term in the right-hand
side of \eqref{bt95} by
\begin{equation}\label{pp8}
\begin{split}
&C2^{\sigma k}2^{-k/2}||(i\partial_t+\square_x-i)
\mathcal{F}^{-1}(f_{k_2,\geq j_1+10})\cdot \mathcal{F}^{-1}(\widetilde{g}_{k_1,j_1})||_{L^{1,2}_{\mathbf{e}}}\\
&+C 2^{\sigma k}2^{-j_1/2}||\mathcal{F}^{-1}(f_{k_2,\geq
j_1+10})\cdot
(i\partial_t+\square_x)\mathcal{F}^{-1}(\widetilde{g}_{k_1,j_1})||_{L^2}\\
&+C 2^{\sigma k}2^{-j_1/2}||\nabla_x\mathcal{F}^{-1}(f_{k_2,\geq
j_1+10})\cdot
\widetilde{\nabla}_x\mathcal{F}^{-1}(\widetilde{g}_{k_1,j_1})||_{L^2}.
\end{split}
\end{equation}
We estimate the first term in the right-hand side of \eqref{pp8} by
\begin{equation*}
C2^{\sigma
k}2^{-k/2}||(i\partial_t+\square_x-i)\mathcal{F}^{-1}(f_{k_2,\geq
j_1+10})\|_{L^{1,2}_{\mathbf{e}}}\cdot
\|\mathcal{F}^{-1}(\widetilde{g}_{k_1,j_1})||_{L^\infty},
\end{equation*}
which is bounded by the right-hand side of \eqref{bt95} in view of
Lemma \ref{lp6}. We estimate the last two terms in the right-hand
side of \eqref{pp8} by
\begin{eqnarray*}
&&C 2^{\sigma k}2^{-j_1/2}\cdot 2^{j_1}\|f_{k_2,\geq j_1+10}\ast
\widetilde{g}_{k_1,j_1}||_{L^2}\\
&\leq&C 2^{\sigma k}2^{j_1/2}\|f_{k_2,\geq j_1+10}\|_{L^2}
\|\mathcal{F}_{(2+1)}^{-1}(\widetilde{g}_{k_1,j_1})||_{L^\infty},
\end{eqnarray*}
which is bounded by the right-hand side of \eqref{bt95} in view of
Lemma \ref{lp6}.
\end{proof}

From Lemma \ref{n2} and \eqref{em}, we have
\begin{corollary}\label{c1}
If $\sigma >1$ and $u_{1},\cdots, u_{n}\in F^{\sigma}$, then the
product $\widetilde{u}_{1}\cdot \ldots\cdot \widetilde{u}_{n}\in
\widetilde{F}^{\sigma}$ and
$$
\|\widetilde{u}_{1}\cdot \ldots\cdot
\widetilde{u}_{n}\|_{\widetilde{F}^{\sigma}}\leq
(C_{\sigma})^{n}\cdot \|u_{1}\|_{F^{\sigma}}\cdot \ldots \cdot
\|u_{n}\|_{F^{\sigma}},
$$
where $\widetilde{u}_{m}\in \{u_{m},\overline{u}_{m}\}$ for
$m=1,\ldots,n$.
\end{corollary}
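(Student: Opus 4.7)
The plan is to reduce the statement to a straightforward induction built on the bilinear algebra property in Lemma \ref{n2}, followed by a single application of the embedding in \eqref{em}. The observation that makes everything work is that the space $F^{\sigma}+\overline{F}^{\sigma}$ is symmetric under complex conjugation: by definition $\|\overline{u}\|_{\overline{F}^{\sigma}}=\|u\|_{F^{\sigma}}$, and hence for each $m$ the factor $\widetilde{u}_m\in\{u_m,\overline{u}_m\}$ satisfies
\begin{equation*}
\|\widetilde{u}_m\|_{F^{\sigma}+\overline{F}^{\sigma}}\leq \|u_m\|_{F^{\sigma}}.
\end{equation*}

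First I would apply Lemma \ref{n2} iteratively to the product $\widetilde{u}_1\cdots\widetilde{u}_n$. At each step I group the product as $\widetilde{u}_1\cdot(\widetilde{u}_2\cdots\widetilde{u}_n)$; the inductive hypothesis places $\widetilde{u}_2\cdots\widetilde{u}_n$ in $F^{\sigma}+\overline{F}^{\sigma}$, and Lemma \ref{n2} then bounds the full $n$-fold product in the same space. Carrying out $n-1$ applications yields
\begin{equation*}
\|\widetilde{u}_1\cdots\widetilde{u}_n\|_{F^{\sigma}+\overline{F}^{\sigma}}\leq C_{\sigma}^{\,n-1}\prod_{m=1}^{n}\|\widetilde{u}_m\|_{F^{\sigma}+\overline{F}^{\sigma}}\leq C_{\sigma}^{\,n-1}\prod_{m=1}^{n}\|u_m\|_{F^{\sigma}}.
\end{equation*}

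Second, I would invoke the second inequality in \eqref{em}, which asserts $\|u\|_{\widetilde{F}^{\sigma}}\leq C\|u\|_{F^{\sigma}+\overline{F}^{\sigma}}$ for every $\sigma\geq 0$, to pass from the $F^{\sigma}+\overline{F}^{\sigma}$ bound to the desired $\widetilde{F}^{\sigma}$ bound. Absorbing the extra constant into $C_{\sigma}$ produces the claimed estimate with a constant of the form $(C_{\sigma})^{n}$.

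There is no real obstacle here; the content of the corollary is entirely contained in Lemma \ref{n2} and the embedding \eqref{em}. The only point to be careful about is that the induction step must be run in the space $F^{\sigma}+\overline{F}^{\sigma}$ rather than $F^{\sigma}$ itself, because a product of the form $u\cdot\overline{v}$ need not lie in either summand individually even when $u,v\in F^{\sigma}$; this is precisely why Lemma \ref{n2} is formulated in the enlarged space, and it is what allows us to treat arbitrary choices of $\widetilde{u}_m\in\{u_m,\overline{u}_m\}$ uniformly.
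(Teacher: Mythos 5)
Your proposal is correct and is precisely the (unwritten) argument the paper intends: the paper introduces the corollary with the sentence ``From Lemma~\ref{n2} and \eqref{em}, we have,'' and your proof supplies exactly that derivation---iterate the bilinear algebra property of $F^{\sigma}+\overline{F}^{\sigma}$ from Lemma~\ref{n2}, use the conjugation symmetry $\|\overline{u}\|_{\overline{F}^{\sigma}}=\|u\|_{F^{\sigma}}$ to handle the $\widetilde{u}_m$, and finish with the embedding $\|\cdot\|_{\widetilde{F}^{\sigma}}\leq C\|\cdot\|_{F^{\sigma}+\overline{F}^{\sigma}}$ from \eqref{em}. Your remark about why the induction must be run in $F^{\sigma}+\overline{F}^{\sigma}$ rather than $F^{\sigma}$ is the right thing to notice and matches the structure of Lemma~\ref{n2}.
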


Now we analyze the term
$$
\mathcal{N}_{0}(u)=\frac{2\bar{u}}{1+u\bar{u}} \in  C(\mathbb{R};
H^{\infty})
$$
with $u\in C(\mathbb{R}; H^{\infty})$.
\begin{lemma}\label{n4}
For $\sigma>1$ then there is $c(\sigma)>0$ with the property that
\begin{eqnarray}\label{ne5}
\|J^{\sigma'}(\mathcal{N}_{0}(u)-\mathcal{N}_{0}(v))\|_{\widetilde{F}^{\sigma}}\leq
C(\sigma, \sigma',
\|u\|_{F^{\sigma+\sigma'}}+\|v\|_{F^{\sigma+\sigma'}})\|J^{\sigma'}(u-v)\|_{F^{\sigma}}
\end{eqnarray}
for any $\sigma'\in\mathbb{Z}_{+}$, and $u,v \in
B_{F^{\sigma_{0}}}(0,c(\sigma))\cap F^{\sigma}$.
\end{lemma}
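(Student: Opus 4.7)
The plan is to exploit the analyticity of the nonlinearity $\mathcal{N}_0(z)=2\bar z/(1+z\bar z)$ through the absolutely convergent power series $\mathcal{N}_0(u)=2\sum_{n\geq 0}(-1)^n u^n\bar u^{n+1}$, so that
$$\mathcal{N}_0(u)-\mathcal{N}_0(v)=2\sum_{n\geq 0}(-1)^n\bigl[u^n\bar u^{n+1}-v^n\bar v^{n+1}\bigr].$$
A standard telescoping factorization
$$u^n\bar u^{n+1}-v^n\bar v^{n+1}=(u-v)\sum_{j=0}^{n-1}u^jv^{n-1-j}\bar u^{n+1}+v^n(\bar u-\bar v)\sum_{\ell=0}^{n}\bar u^\ell\bar v^{n-\ell}$$
then writes $\mathcal{N}_0(u)-\mathcal{N}_0(v)$ as an infinite sum of monomials, each a product of $2n+1$ factors drawn from $\{u,\bar u,v,\bar v\}$ together with exactly one factor of $u-v$ or $\bar u-\bar v$. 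It thus suffices to bound each monomial after applying $J^{\sigma'}$ and to sum the results.

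To handle each monomial I would establish a tame refinement of Corollary \ref{c1}: for $\sigma>1$, $\sigma'\in\mathbb{Z}_+$, and $w_1,\ldots,w_m\in F^{\sigma+\sigma'}+\overline F^{\sigma+\sigma'}$,
$$\|J^{\sigma'}(w_1\cdots w_m)\|_{\widetilde F^\sigma}\leq (C_\sigma)^m\sum_{i=1}^m\|J^{\sigma'}w_i\|_{F^\sigma+\overline F^\sigma}\prod_{j\neq i}\|w_j\|_{F^{\sigma_0}+\overline F^{\sigma_0}}.$$
This should follow by revisiting the dyadic decomposition in the proof of Lemma \ref{n2}: at each stage where a bilinear product is analyzed in $Z_k$, the output frequency is dominated by one of the two inputs, so the $J^{\sigma'}$-weight and the $F^\sigma$-norm can be assigned to that higher-frequency factor, while the remaining factor is controlled by its $F^{\sigma_0}$-norm (which delivers $L^\infty$-type bounds through Lemma \ref{lp6}, since $\sigma_0>3/2>1$). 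Iterating this bilinear tame inequality $m-1$ times produces the displayed multilinear estimate, with a constant of the form $(C_\sigma)^m$ coming from repeated applications of the algebra property.

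Feeding this bound into each telescoped monomial, the block of degree $2n+1$ is majorized by
$$(C_\sigma)^{2n+1}\Big[\|J^{\sigma'}(u-v)\|_{F^\sigma}+(\|u\|_{F^{\sigma+\sigma'}}+\|v\|_{F^{\sigma+\sigma'}})\|u-v\|_{F^{\sigma_0}}\Big](\|u\|_{F^{\sigma_0}}+\|v\|_{F^{\sigma_0}})^{2n},$$
so choosing $c(\sigma)>0$ with $4C_\sigma c(\sigma)<1$ makes the geometric series in $n$ convergent. The extra term inside the bracket is absorbed into the $R$-dependent constant $C(\sigma,\sigma',R)$ on the right-hand side of \eqref{ne5} using $\|u-v\|_{F^{\sigma_0}}\leq\|J^{\sigma'}(u-v)\|_{F^\sigma}$ (valid in the regime $\sigma+\sigma'\geq\sigma_0$, which is the one of interest for Theorem \ref{t2}; the opposite regime is handled by the same inequality with constants depending on the Sobolev indices). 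The principal obstacle is exactly the tame multilinear estimate: applying Corollary \ref{c1} naively would force every factor to contribute an $F^\sigma$-norm, producing a factor $R^{2n}$ that destroys summability. The Moser-style refinement is what shifts all high-regularity cost onto a single factor while the others remain measured in the small low norm $F^{\sigma_0}$; once that refinement is secured, the remainder of the argument reduces to an elementary Taylor-series summation.
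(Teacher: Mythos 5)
Your proposal is correct and follows the same basic plan as the paper's (very terse) proof: power-series expansion followed by a tame multilinear bound, with geometric summation enabled by smallness in the low norm $F^{\sigma_0}$. The only real difference is the algebraic device used to extract the difference factor. The paper first rewrites
$$
\mathcal N_{0}(u)-\mathcal N_{0}(v)=\frac{\overline{u-v}}{(1+u\bar u)(1+v\bar v)}-\frac{(u-v)\,\overline{uv}}{(1+u\bar u)(1+v\bar v)},
$$
so that $u-v$ or $\overline{u-v}$ appears exactly once, and then expands $1/[(1+u\bar u)(1+v\bar v)]$ as a product of geometric series; your telescoping of $u^n\bar u^{n+1}-v^n\bar v^{n+1}$ accomplishes the same thing at the cost of an extra positional sum per monomial, which is purely cosmetic. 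The substantive point you correctly isolate is that Corollary \ref{c1} as stated is the \emph{symmetric} algebra estimate and, by itself, cannot yield both summability of the series and the $J^{\sigma'}$-weighted right-hand side of \eqref{ne5}; the needed tame refinement (all but one factor measured in the small low norm) is implicit in the dyadic bound \eqref{n2e2}, whose lower-frequency factor carries only the weight $2^{\sigma'' k_1}$ for $\sigma''\in(1,\sigma)$, so one may take $1<\sigma''<\sigma_0$ and attach the $J^{\sigma'}$-weight to the dominant-frequency factor exactly as you describe. The paper's one-line appeal to Corollary \ref{c1} glosses over this, so your care in spelling it out is warranted. Your caveat that the conversion $\|u-v\|_{F^{\sigma_0}}\lesssim\|J^{\sigma'}(u-v)\|_{F^\sigma}$ needs $\sigma+\sigma'\geq\sigma_0$ is a legitimate observation about the lemma as literally stated, but it is harmless here since the paper invokes the lemma only with $\sigma=\sigma_0$.
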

\begin{proof}[\textbf{Proof of Lemma \ref{n4}}]
We write first
$$
\mathcal{N}_{0}(u)-\mathcal{N}_{0}(v)=\frac{\overline{u-v}}{(1+u\bar{u})(1+v\bar{v})}-
\frac{(u-v)\overline{uv}}{(1+u\bar{u})(1+v\bar{v})}
$$
First we expand the above to power series, then by Corollary
\ref{c1} we can get \eqref{ne5} when $c(\sigma)$ sufficently small.
\end{proof}

\section{Proof of Theorem \ref{t2}}

In this section we prove Theorem \ref{t2}. Our main ingredients are
Lemma \ref{le1}, Lemma \ref{le2}, Lemma \ref{n1}, Lemma \ref{n4},
and the bound
\begin{equation}\label{fi1}
\sup_{t\in\mathbb{R}}\|u\|_{H^\sigma}\leq
C_\sigma\|u\|_{F^\sigma}\text{ for any }\sigma\geq 0\text{ and }u\in
F^\sigma,
\end{equation}
which follows from Lemma \ref{lp6}. Assume that ${\sigma_0}>3/2$ and
$\phi\in H^\infty\cap B_{H^{\sigma_0}}(0,\epsilon({\sigma_0}))$,
where $\epsilon({\sigma_0})\ll 1$ is to be fixed. We define
recursively
\begin{equation}\label{fi2*}
\begin{cases}
&u_0=\psi(t)\cdot W(t)\phi;\\
&u_{n+1}=\psi(t)\cdot W(t)\phi+\psi(t)\cdot
\int_0^tW(t-s)(\mathcal{N}(u_n(s)))\,ds\text{ for }n\in\mathbb{Z}_+.
\end{cases}
\end{equation}
Where $\mathcal{N}$ defined in \eqref{N}, that is
\begin{eqnarray*}\label{F}
\mathcal{N}(u_n)
&=&\psi(t)\cdot\big[\mathcal{N}_{0}(u_n)[(\partial_{x_{1}}u_n)^{2}-
(\partial_{x_{2}}u_n)^{2}]-4b\mathcal{R}_1(\mathcal{N}_{0}(u_n)\partial_{x_1}
u_n)\partial_{x_2} u_n\nonumber\\
&&+4b\mathcal{R}_2(\mathcal{N}_{0}(u_n)\partial_{x_2}
u_n)\partial_{x_2}
u_n-4b\mathcal{R}_3(\mathcal{N}_{0}(u_n)\partial_{x_1}
u_n)\partial_{x_1}
u_n\\
&&+4b\mathcal{R}_1(\mathcal{N}_{0}(u_n)\partial_{x_2}
u_n)\partial_{x_1} u_n\big]\nonumber.
\end{eqnarray*}

The rest of the proof is organized as follows. We first analyze
\eqref{fi2} with $\mathcal{N}(u_n)$ replaced by
$\psi(t)\mathcal{N}_0(u_n)(\partial_{x_1}u_n)^2$, then notice that
all the results hold for the $\mathcal{N}(u_n)$ case. Finally, we
use these results to conclude the proof of Theorem \ref{t2}.

Now we define recursively
\begin{equation}\label{fi2}
\begin{cases}
&u_0=\psi(t)\cdot W(t)\phi;\\
&u_{n+1}=\psi(t)\cdot W(t)\phi+\psi(t)\cdot
\int_0^tW(t-s)(\widetilde{\mathcal{N}}(u_n(s)))\,ds\text{ for
}n\in\mathbb{Z}_+,
\end{cases}
\end{equation} where $\widetilde{\mathcal{N}}(u_n(s))=\psi(s)\mathcal{N}_0(u_n(s))(\partial_{x_1}u_n(s))^2$, clearly, $u_n\in
C(\mathbb{R}:H^\infty)$.

We show first that
\begin{equation}\label{fi3}
\|u_n\|_{F^{\sigma_0}}\leq C_{\sigma_0}\|\phi\|_{H^{\sigma_0}}\text{
for any }n=0,1,\ldots,\text{ if }\epsilon({\sigma_0}) \text{ is
sufficiently small}.
\end{equation}
The bound \eqref{fi3} holds for $n=0$, due to Lemma \ref{le1}. Then,
using Lemma \ref{n4} with $\sigma'=0$, $v\equiv 0$, Lemma \ref{n1},
and the inequality \eqref{em}, we have
\begin{equation*}
\|\widetilde{\mathcal{N}}(u_n)\|_{N^{\sigma_0}}\leq
C_{\sigma_0}\|u_n\|_{F^{\sigma_0}}^3.
\end{equation*}
Using Lemma \ref{le2}, the definition \eqref{fi2}, and Lemma
\ref{le1}, it follows that
\begin{equation*}
\|u_{n+1}\|_{F^{\sigma_0}}\leq
C_{\sigma_0}\|\phi\|_{H^{\sigma_0}}+C_{\sigma_0}\|u_n\|_{F^{\sigma_0}}^3,
\end{equation*}
which leads to \eqref{fi3} by induction over $n$.

We show now that
\begin{equation}\label{fi4}
\|u_{n}-u_{n-1}\|_{F^{\sigma_0}}\leq 2^{-n}\cdot
C_{\sigma_0}\|\phi\|_{H^{\sigma_0}}\text{ for any
}n\in\mathbb{Z}_+\text{ if }\epsilon({\sigma_0}) \text{ is
sufficiently small}.
\end{equation}
This is clear for $n=0$ (with $u_{-1}\equiv 0$), by Lemma \ref{le1}.
Then, using Lemma \ref{n4} with $\sigma'=0$, Lemma \ref{n1}, and the
estimates \eqref{em} and \eqref{fi3}, we have
\begin{equation*}
\|\widetilde{\mathcal{N}}(u_{n-1})-\widetilde{\mathcal{N}}(u_{n-2})\|_{N^{\sigma_0}}\leq
C_{\sigma_0}\cdot \epsilon({\sigma_0})^2\cdot \|u_{n-1}-
u_{n-2}\|_{F^{\sigma_0}}.
\end{equation*}
Using Lemma \ref{le2} and the definition \eqref{fi2} it follows that
\begin{equation*}
\|u_{n}-u_{n-1}\|_{F^{\sigma_0}}\leq C_{\sigma_0}\cdot
\epsilon({\sigma_0})^2\cdot \|u_{n-1}-u_{n-2}\|_{F^{\sigma_0}},
\end{equation*}
which leads to \eqref{fi4} by induction over $n$.

We show now that
\begin{equation}\label{fi5}
\|J^{\sigma'}(u_n)\|_{F^{\sigma_0}}\leq
C({\sigma_0},\sigma',\|J^{\sigma'}\phi\|_{H^{\sigma_0}})\text{ for
any }n,\sigma'\in\mathbb{Z}_+.
\end{equation}
We argue by induction over $\sigma'$ (the case $\sigma'=0$ follows
from \eqref{fi3}). So we may assume that
\begin{equation}\label{fi6}
\|J^{\sigma'-1}(u_n)\|_{F^{\sigma_0}}\leq
C({\sigma_0},\sigma',\|J^{\sigma'-1}\phi\|_{H^{\sigma_0}})\text{ for
any }n\in\mathbb{Z}_+,
\end{equation}
and it suffices to prove that
\begin{equation}\label{fi7}
\|\partial_{x_i}^{\sigma'}(u_n)\|_{F^{\sigma_0}}\leq
C({\sigma_0},\sigma',\|J^{\sigma'}\phi\|_{H^{\sigma_0}})\text{ for
any }n\in\mathbb{Z}_+\text{ and }i=1,2.
\end{equation}
The bound \eqref{fi7} for $n=0$ follows from Lemma \ref{le1}. We use
the decomposition
\begin{eqnarray*}\label{F}
\widetilde{\mathcal{N}}(u_n)
&=&\psi(t)\cdot\mathcal{N}_{0}(u_n)(\partial_{x_{1}}u_n)^{2},
\end{eqnarray*}
thus
\begin{equation}\label{fi8}
\partial_{x_i}^{\sigma'}(\mathcal{N}(u_n))=2\psi(t)\cdot\mathcal{N}_0(u_n)\cdot\partial_{x_1}u_n\cdot\partial_{x_i}^{\sigma'}\partial_{x_1}u_n+E_n,
\end{equation}
where
\begin{equation*}
E_n=\psi(t)\cdot \sum_{\sigma'_1+\sigma'_2+\sigma'_3=\sigma'\text{
and
}\sigma'_3,\sigma'_2<\sigma'}\partial_{x_i}^{\sigma'_1}\mathcal{N}_0(u_n)\cdot\partial_{x_i}^{\sigma'_2}\partial_{x_1}
u_n\cdot\partial_{x_i}^{\sigma'_3}\partial_{x_1}u_n.
\end{equation*} Using  Lemma \ref{n1},
\begin{equation*}
\begin{split}
||E_n||_{N^{\sigma_0}}\leq C_{\sigma_0}&\sum_{\sigma'_1+\sigma'_2+\sigma'_3=\sigma'\text{ and }\sigma'_3,\sigma'_2<\sigma'}\\
&||J^{-1}\partial_{x_i}^{\sigma'_1}\mathcal{N}_0(u_n)||_{\widetilde{F}^{\sigma_0}}\cdot
||J^{-1}\partial_{x_i}^{\sigma'_2}\partial_{x_1}u_n||_{\widetilde{F}^{\sigma_0}}\cdot
||J^{-1}\partial_{x_i}^{\sigma'_3}\partial_{x_1}u_n||_{\widetilde{F}^{\sigma_0}}.
\end{split}
\end{equation*}
Using now Lemma \ref{n4} with $v=0$, the bound \eqref{em}, and the
induction hypothesis \eqref{fi6}, we have
\begin{equation}\label{fi9}
||E_n||_{N^{\sigma_0}}\leq
C({\sigma_0},\sigma',\|J^{\sigma'-1}\phi\|_{H^{\sigma_0}}).
\end{equation}
In addition, using again Lemma \ref{n1}, Lemma \ref{n4} with $v=0$,
\eqref{em} and \eqref{fi3},
\begin{equation}\label{fi10}
||\psi(t)\cdot\mathcal{N}_0(u_n)\cdot\partial_{x_1}u_n\cdot\partial_{x_i}^{\sigma'}\partial_{x_1}u_n||_{N^{\sigma_0}}\leq
C_{{\sigma_0}}\cdot \epsilon({\sigma_0})^2\cdot
||\partial_{x_i}^{\sigma'}u_n||_{F^{\sigma_0}}.
\end{equation}
We use now the definition \eqref{fi2}, together with Lemma
\ref{le1}, Lemma \ref{le2}, and the bounds \eqref{fi9} and
\eqref{fi10} to conclude that
\begin{equation*}
||\partial_{x_i}^{\sigma'}u_{n+1}||_{F^{\sigma_0}}\leq
C({\sigma_0},\sigma',\|J^{\sigma'}\phi\|_{H^{\sigma_0}})+C_{{\sigma_0}}\cdot
\epsilon({\sigma_0})^2\cdot
||\partial_{x_i}^{\sigma'}u_n||_{F^{\sigma_0}}.
\end{equation*}
The bound  \eqref{fi7} follows by induction over $n$ provided that
$\epsilon({\sigma_0})$ is sufficiently small.

Finally, we show that
\begin{equation}\label{fi20}
\|J^{\sigma'}(u_{n}-u_{n-1}))\|_{F^{\sigma_0}}\leq 2^{-n}\cdot
C({\sigma_0},\sigma',\|J^{\sigma'}\phi\|_{H^{\sigma_0}})\text{ for
any }n,\sigma'\in\mathbb{Z}_+.
\end{equation}
As before, we argue by induction over $\sigma'$ (the case
$\sigma'=0$ follows from \eqref{fi4}). So we may assume that
\begin{equation}\label{fi61}
\|J^{\sigma'-1}(u_n-u_{n-1})\|_{F^{\sigma_0}}\leq 2^{-n}\cdot
C({\sigma_0},\sigma',\|J^{\sigma'-1}\phi\|_{H^{\sigma_0}})\text{ for
any }n\in\mathbb{Z}_+,
\end{equation}
and it suffices to prove that
\begin{equation}\label{fi71}
\|\partial_{x_i}^{\sigma'}(u_n-u_{n-1})\|_{F^{\sigma_0}}\leq
2^{-n}\cdot
C({\sigma_0},\sigma',\|J^{\sigma'}\phi\|_{H^{\sigma_0}})\text{ for
any }n\in\mathbb{Z}_+\text{ and }i=1, 2.
\end{equation}
The bound \eqref{fi71} for $n=0$ follows from Lemma \ref{le1}. For
$n\geq 1$ we use the decomposition
\begin{equation}\label{fi73}
\begin{split}
\mathcal{N}(u_{n-1})-&\mathcal{N}(u_{n-2})=\psi(t)\cdot(\mathcal{N}_0(u_{n-1})-\mathcal{N}_0(u_{n-2}))\cdot(\partial_{x_1}u_{n-1})^2\\
&+\psi(t)\cdot\mathcal{N}_0(u_{n-2})\cdot\partial_{x_1}(u_{n-1}-u_{n-2})\cdot
\partial_{x_1}(u_{n-1}+u_{n-2}).
\end{split}
\end{equation}
The same argument as before, which consists of expanding the
$\sigma'$ derivative, and combining Lemma \ref{n1}, Lemma \ref{n4},
\eqref{fi5}, and \eqref{fi61}, shows that
\begin{equation}\label{fi72}
\begin{split}
\big|\big|\partial_{x_i}^{\sigma'}\big[\psi(t)\cdot(\mathcal{N}_0(u_{n-1})-\mathcal{N}_0(u_{n-2}))\cdot(\partial_{x_1}u_{n-1})^2\big]\big|\big|_{N^{\sigma_0}}\\
\leq 2^{-n}\cdot
C({\sigma_0},\sigma',||J^{\sigma'}\phi||_{H^{\sigma_0}}),
\end{split}
\end{equation}
To estimate the $\sigma'$ derivative of the term in the second line
of \eqref{fi73}, we expand again the $\sigma'$ derivatives. Using
again the combination of Lemma \ref{n1}, Lemma \ref{n4},
\eqref{fi5}, and \eqref{fi61}, the $N^{\sigma_0}$ norm of most of
the terms that appear is again dominated by $2^{-n}\cdot
C({\sigma_0},\sigma',||J^{\sigma'}\phi||_{H^{\sigma_0}})$. The only
remaining terms are
\begin{equation*}
\psi(t)\cdot\mathcal{N}_0(u_{n-2})\cdot\partial_{x_i}^{\sigma'}\partial_{x_1}(u_{n-1}-u_{n-2})\cdot
\partial_{x_1}(u_{n-1}+u_{n-2}),
\end{equation*}
and we can estimate
\begin{equation*}
\begin{split}
||\psi(t)\cdot\mathcal{N}_0(u_{n-2})\cdot\partial_{x_i}^{\sigma'}\partial_{x_1}(u_{n-1}-u_{n-2})\cdot \partial_{x_1}(u_{n-1}+u_{n-2})||_{N^{\sigma_0}}\\
\leq C_{\sigma_0}\cdot\epsilon({\sigma_0})^2\cdot
||\partial_{x_1}^{\sigma'}(u_{n-1}-u_{n-2})||_{F^{\sigma_0}}.
\end{split}
\end{equation*}
As before, it follows that
\begin{equation*}
\begin{split}
||\partial_{x_i}^{\sigma'}(u_{n}-u_{n-1})||_{F^{\sigma_0}}&\leq 2^{-n}\cdot C({\sigma_0},\sigma',\|J^{\sigma'}\phi\|_{H^{\sigma_0}})\\
&+C_{{\sigma_0}}\cdot \epsilon({\sigma_0})^2\cdot
||\partial_{x_i}^{\sigma'}(u_{n-1}-u_{n-2})||_{F^{\sigma_0}}.
\end{split}
\end{equation*}
The bound  \eqref{fi71} follows by induction provided that
$\epsilon({\sigma_0})$ is sufficiently small.

In view of Lemma \ref{n1}, we notice that $\mathcal{N}(u)$ and
$\widetilde{\mathcal{N}}(u)$ share the same nonlinear estimate, so
the argument above for system \eqref{fi2} can be used to system
\eqref{fi2*}. Thus, \eqref{fi3}, \eqref{fi4}, \eqref{fi5},and
\eqref{fi20} also hold for $u_n$ defined by system \eqref{fi2*}.

We can now use \eqref{fi20} and \eqref{fi1} to construct
\begin{equation*}
u=\lim_{n\to\infty}u_n\in C(\mathbb{R}:H^\infty).
\end{equation*}
In view of \eqref{fi2},
\begin{equation*}
u=\psi(t)\cdot W(t)\phi+\psi(t)\cdot
\int_0^tW(t-s)(\mathcal{N}(u(s)))\,ds\text{ on
}\mathbb{R}^d\times\mathbb{R},
\end{equation*}
so $\widetilde{S}^\infty(\phi)$, the restriction of $u$ to
$\mathbb{R}^d\times[-1,1]$, is a solution of the initial-value
problem \eqref{t21}.

For Theorem \ref{t2} (b) and (c), it suffices to show that if
$\sigma'\in\mathbb{Z}_+$ and $\phi,\phi'\in
B_{H^{\sigma_0}}(0,\epsilon({\sigma_0}))\cap H^\infty$ then
\begin{equation}\label{fi80}
\sup_{t\in[-1,1]}||\widetilde{S}^\infty(\phi)-\widetilde{S}^\infty(\phi')||_{H^{\sigma_0+\sigma'}}\leq
C(\sigma_0,\sigma',||\phi||_{H^{\sigma_0+\sigma'}})\cdot
||\phi-\phi'||_{H^{\sigma_0+\sigma'}}.
\end{equation}
Part  (b) corresponds to the case $\sigma'=0$. To prove
\eqref{fi80}, we define the sequences $u_n$ and $u'_n$,
$n\in\mathbb{Z}_+$, as in \eqref{fi2}. Using  Lemma \ref{le1},
\begin{equation*}
||u_0-u'_0||_{F^{\sigma_0}}\leq
C_{\sigma_0}||\phi-\phi'||_{H^{\sigma_0}}.
\end{equation*}
Then we decompose $\mathcal{N}(u_n)-\mathcal{N}(u'_n)$ in the same
way as in \eqref{fi73}. As before, we combine Lemma \ref{le1}, Lemma
\ref{le2}, Lemma \ref{n1}, Lemma \ref{n4}, and the uniform bound
\eqref{fi3} to conclude that
\begin{equation*}
||u_{n+1}-u'_{n+1}||_{F^{\sigma_0}}\leq
C_{\sigma_0}||\phi-\phi'||_{H^{\sigma_0}}+C_{\sigma_0}\cdot\epsilon({\sigma_0})^2\cdot
||u_{n}-u'_{n}||_{F^{\sigma_0}}.
\end{equation*}
By induction over $n$ it follows that
\begin{equation*}
||u_n-u'_n||_{F^{\sigma_0}}\leq
C_{\sigma_0}||\phi-\phi'||_{H^{\sigma_0}}\text{ for any
}n\in\mathbb{Z}_+.
\end{equation*}
In view of \eqref{fi1} this proves \eqref{fi80} for  $\sigma'=0$.

Assume now that $\sigma'\geq  1$. In view of \eqref{fi1}, for
\eqref{fi80} it suffices to prove that
\begin{equation}\label{fi81}
||J^{\sigma'}(u_n-u'_n)||_{F^{\sigma_0}}\leq
C(\sigma_0,\sigma',||J^{\sigma'}(\phi)||_{H^{\sigma_0}})\cdot
||J^{\sigma'}(\phi-\phi')||_{H^{\sigma_0}},
\end{equation}
for any $n\in\mathbb{Z}_+$. We argue, as before, by induction over
$\sigma'$: we decompose $\mathcal{N}(u_n)-\mathcal{N}(u'_n)$ as in
\eqref{fi73}, and combine Lemma \ref{le1}, Lemma \ref{le2}, Lemma
\ref{n1}, Lemma \ref{n4}, and the uniform bound \eqref{fi5}. The
proof of \eqref{fi81} is similar to the proof of \eqref{fi20}. This
completes the proof of Theorem \ref{t2}.

\noindent{\bf Acknowledgment.} This work was finished under the
patient advising of Prof. Carlos E. Kenig while the author was
visiting the Department of Mathematics at the University of Chicago
under the auspices of China Scholarship Council. The author is
deeply indebted to Prof. Kenig for the many encouragements and
precious advices. The author is also very grateful to Prof. Baoxiang
Wang for encouragements and supports.

 \footnotesize

\end{document}